\theoremstyle{definition} 
\newtheorem{thm}{Theorem}[section]
\newtheorem{cor}[thm]{Corollary}
\newtheorem{prop}[thm]{Proposition}
\newtheorem{lem}[thm]{Lemma}
\theoremstyle{definition}
\newtheorem{defn}[thm]{Definition}
\newtheorem{rem}[thm]{Remark}
\newcommand{\Tr}{\mathrm{Tr}}
\newcommand{\Aut}{\mathrm{Aut}}
\newcommand{\bb}{\mathbb}
\newcommand{\Z}{\bb{Z}}
\newcommand{\bQ}{\bb{Q}}
\newcommand{\bR}{\bb{R}}
\newcommand{\bP}{\bb{P}}
\newcommand{\bC}{\bb{C}}
\newcommand{\bF}{\bb{F}}
\newcommand{\bE}{\bb{E}}
\newcommand{\zp}{\Z_p}
\newcommand{\mr}{\mathrm}
\newcommand{\mc}{\mathcal}
\newcommand{\unl}{\underline}
\newcommand{\cP}{\mc{P}}
\newcommand{\Sur}{\mathrm{Sur}}
\newcommand{\CL}{\mathrm{CL}}
\newcommand{\KL}{\mathrm{KL}}
\newcommand{\M}{\mr{M}}
\newcommand{\ep}{\epsilon}
\newcommand{\al}{\alpha}
\newcommand{\dt}{\delta}
\newcommand{\cok}{\mathrm{cok}}
\newcommand{\Hom}{\mathrm{Hom}}
\newcommand{\ol}{\overline}
\newcommand{\lt}{\left}
\newcommand{\rt}{\right}
\numberwithin{equation}{section}
\begin{document}

\title[Determinantal row-sparse matrices]{Distribution of the cokernels of determinantal row-sparse matrices}
\date{}
\author{Jungin Lee and Myungjun Yu}
\address{J. Lee -- Department of Mathematics, Ajou University, Suwon 16499, Republic of Korea \newline
M. Yu -- Department of Mathematics, Yonsei University, Seoul 03722, Republic of Korea}
\email{jileemath@ajou.ac.kr, mjyu@yonsei.ac.kr}

\begin{abstract}
We study the distribution of the cokernels of random row-sparse integral matrices $A_n$ 
according to the determinantal measure from a structured matrix $B_n$ with a parameter $k_n \ge 3$. 
Under a mild assumption on the growth rate of $k_n$, we prove that the distribution of the $p$-Sylow subgroup of the cokernel of $A_n$ converges to that of Cohen--Lenstra for every prime $p$.
Our result extends the work of A. Mészáros which established convergence to the Cohen--Lenstra distribution when $p \ge 5$ and $k_n=3$ for all positive integers $n$.
\end{abstract}

\maketitle

\section{Introduction}\label{Sec1}

As a higher-dimensional generalization of trees, Kalai \cite{Kal83} introduced the notion of hypertrees ($\bQ$-acyclic complexes). A finite $r$-dimensional simplicial complex $C$ is called a ($r$-dimensional) \textit{hypertree} if it has complete $(r-1)$-skeleton and $H_r(C, \bQ)=H_{r-1}(C, \bQ)=0$. If $C$ is a $r$-dimensional hypertree on $n$ vertices, then it has exactly $\binom{n-1}{r}$ $r$-faces and the homology group $H_{r-1}(C)$ is finite. Let $\mathcal{T}_r(n)$ be the set of all $r$-dimensional hypertrees on the vertex set $[n] = \{ 1, 2, \ldots, n \}$. By \cite[Theorem 1]{Kal83}, we have
\begin{equation} \label{eq1a}
\sum_{C \in \mathcal{T}_r(n)} |H_{r-1}(C)|^2 = n^{\binom{n-2}{r}}.
\end{equation}
When $r=1$, the above equation recovers Cayley's formula which states that the number of spanning trees on $n$ labeled vertices is $n^{n-2}$. 

The homology group $H_{r-1}(C)$ for $C \in \mathcal{T}_r(n)$ can be expressed as the cokernel of a certain integral matrix. Let $I_{n, r}$ be an $\binom{n-1}{r} \times \binom{n}{r+1}$ matrix whose rows are indexed by $r$-element subsets of $[n-1]$ and columns are indexed by $(r+1)$-element subsets of $[n]$. If $S$ is an $r$-element subset of $[n-1]$ and $S' = \{ s_0 < s_1 < \cdots < s_r \}$ is an $(r+1)$-element subset of $[n]$, the $(S,S')$ entry of the matrix $I_{n, r}$ is defined as
$$
I_{n,r}(S, S') = \left\{\begin{matrix}
(-1)^j & \text{ if } S'=S \cup \{ s_j \} \\
0 & \text{ if }S \not\subset S'
\end{matrix}\right. .
$$
Let $I_{n,r}^T[C]$ be the $\binom{n-1}{r} \times \binom{n-1}{r}$ submatrix of $I_{n,r}^T$ whose rows are indexed by the $r$-faces of $C$. If we regard $I_{n,r}^T[C]$ as an integral matrix, then we have $H_{r-1}(C) \cong \cok(I_{n,r}^T[C])$ by \cite[Lemma 2]{Kal83}.

Now we concentrate on the case $r=2$. Let $C_n$ be a random element in $\mathcal{T}_2(n)$ with distribution
$$
\bP(C_n = C) = \frac{|H_{1}(C)|^2}{n^{\binom{n-2}{2}}} = \frac{|\cok(I_{n,2}^T[C])|^2}{n^{\binom{n-2}{2}}}.
$$
(It is a probability distribution by \eqref{eq1a}.) Kahle and Newman \cite{KN22} conjectured that the $p$-Sylow subgroup of $H_1(C_n)$ converges to the Cohen--Lenstra distribution, i.e.
\begin{equation} \label{eq1b}
\lim_{n \to \infty} \bP(H_1(C_n)_p \cong G) = \nu_{\CL, p}(G) := \frac{1}{|\Aut(G)|}\prod_{i=1}^\infty (1-p^{-i})
\end{equation}
for every finite abelian $p$-group $G$. (For an abelian group $G$, denote the $p$-Sylow subgroup of $G$ by $G_p$.) This conjecture was disproved for $p=2$ by Mészáros \cite{Mes24a}, but it remains open for $p>2$. Note that Kahle, Lutz, Newman and Parsons \cite[Conjecture 5]{KLNP20} gave a similar conjecture for a uniform random element of $\mathcal{T}_2(n)$.

As an analogue, Mészáros \cite{Mes23} constructed a matrix $B_n$ which has a similar structure to $I_{n,2}^T$, but is easier to work with. For our purposes, we present a more general version of $B_n$. 
For each positive integer $n$, let $e_1, e_2, \ldots, e_n$ be the standard basis of $\bR^n$ and $k_n \ge 3$ be a positive integer.
The matrix $B_n$ is defined as follows. The columns of $B_n$ are indexed by $[n] := \{ 1, 2, \ldots, n \}$ and the rows of $B_n$ are indexed by $[n]^{k_n}$.
The row corresponding to $(b_1, b_2, \ldots, b_{k_n}) \in [n]^{k_n}$ is given by $e_{b_1} + e_{b_2} + \cdots + e_{b_{k_n}}$. For an $n$-element subset $Y$ of $[n]^{k_n}$, let $B_n[Y]$ denote the $n \times n$ submatrix of $B_n$ which consists of $n$ rows of $B_n$ indexed by $Y$. 
Let $X_n$ be the random $n$-element subset of $[n]^{k_n}$ with distribution
$$
\bP(X_n = Y) = \frac{\det(B_n[Y])^2}{\det(B_n^T B_n)}
$$
(it is a probability measure by the Cauchy--Binet formula) and $A_n$ be the random $n \times n$ integral matrix defined by $A_n = B_n[X_n]$.

For a sequence of random finite abelian $p$-groups $(G_n)_{n=1}^{\infty}$, we say $G_n$ \textit{converges to CL} if the distribution of $G_n$ converges to the Cohen--Lenstra distribution $\nu_{\CL, p}$ as $n \to \infty$. 
When $k_n=3$ for all $n$, Mészáros \cite[Theorem 1.1]{Mes23} proved that for every prime $p \ge 5$ the $p$-Sylow subgroup of $\cok(A_n)$ converges to CL. In this paper, we generalize this result to the case where $p \nmid k_n$ for all sufficiently large $n$ and $k_n$ does not grow too rapidly. 

\begin{thm}
\label{thm: cokernel distribution theorem 1_intro}
(Theorem \ref{thm: cokernel distribution theorem 1}) Let $G$ be a finite abelian group and $\cP$ be a finite set of primes including those dividing $|G|$. 
Assume that a sequence $(k_n)_{n=1}^{\infty}$ satisfies the following:
\begin{enumerate}
    \item for every prime $p$ in $\cP$, $p \nmid k_n$ for all sufficiently large $n$;
    \item for every $\ep>0$, $k_n < n^{\ep}$ for all sufficiently large $n$;
    \item \label{p=2 thm1a} if $2 \in \cP$, then for every $\dt>0$, $\dt \log\log n < k_n$ for all sufficiently large $n$.
\end{enumerate}
Then
$$
\lim_{n\to \infty} \bP\left(\bigoplus_{p \in \cP} \cok(A_n)_p \cong G\right) = \frac{1}{|\Aut(G)|}\prod_{p\in \cP} \prod_{i=1}^\infty (1-p^{-i})
= \prod_{p \in \cP} \nu_{\CL, p}(G_p).
$$
\end{thm}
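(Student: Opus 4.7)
I plan to apply the surjective-moment method for Cohen--Lenstra convergence. Because $\prod_{p \in \cP} \nu_{\CL, p}$ is characterized by having all surjective $H$-moments equal to $1$ (for $H$ a finite abelian group with prime divisors in $\cP$), it suffices---modulo a uniform moment bound and standard tightness---to prove
\begin{equation} \label{plan-target}
\lim_{n\to\infty}\bE\bigl[\#\Sur(\cok(A_n), H)\bigr] \;=\; 1
\end{equation}
for every such $H$. Since $A_n$ is square, $\cok(A_n) \cong \cok(A_n^T) \cong \Z^n/\langle r_y : y \in X_n\rangle$, where $r_{(b_1,\ldots,b_{k_n})} := e_{b_1} + \cdots + e_{b_{k_n}}$. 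For $\phi : \Z^n \twoheadrightarrow H$ with $H_i := \phi(e_i)$, $\phi$ descends to $\cok(A_n)$ iff $X_n \subseteq Y_\phi$, where
$$
Y_\phi := \bigl\{(b_1,\ldots,b_{k_n}) \in [n]^{k_n} : H_{b_1}+\cdots+H_{b_{k_n}} = 0\bigr\}.
$$
By Cauchy--Binet applied to the extended submatrix $B_n[Y_\phi]$ (with rows indexed by $Y_\phi$),
$$
\bP(X_n \subseteq Y_\phi) \;=\; \frac{\det\bigl(B_n[Y_\phi]^T B_n[Y_\phi]\bigr)}{\det(B_n^T B_n)},
$$
so the moment \eqref{plan-target} is an explicit sum over surjections of a determinant ratio.

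The denominator is a quick calculation: $B_n^T B_n = k_n n^{k_n-1} I_n + k_n(k_n-1) n^{k_n-2} J_n$, whence $\det(B_n^T B_n) = k_n^{n+1} n^{(k_n-1)n}$. For the numerator, I would expand the indicator $\mathbb{1}\bigl[\sum_\ell H_{b_\ell} = 0\bigr]$ via Fourier inversion on $H$. Writing $s_\chi := \sum_{i \in [n]} \chi(H_i)$ and $u_\chi := (\chi(H_i))_{i \in [n]} \in \bC^n$ for $\chi \in \hat H$, a direct computation gives
$$
B_n[Y_\phi]^T B_n[Y_\phi] \;=\; k_n D_\phi \;+\; \frac{k_n(k_n-1)}{|H|}\sum_{\chi \in \hat H} s_\chi^{k_n-2}\, u_\chi u_\chi^T,
$$
where $D_\phi$ is diagonal with $(i,i)$ entry $|H|^{-1}\sum_{\chi} \chi(H_i)\, s_\chi^{k_n-1}$. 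The second summand has rank at most $|H|$, so the matrix determinant lemma collapses the $n \times n$ determinant into an explicit $|H| \times |H|$ determinant in the $s_\chi$ and the pairings $u_\chi^T D_\phi^{-1} u_{\chi'}$.

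For a \emph{typical} $\phi$---one whose profile $n_h := \#\{i : H_i = h\}$ is close to $n/|H|$ for every $h \in H$---one has $s_{\chi_1} = n$ for the trivial character while $|s_\chi|/n \ll 1$ for every nontrivial $\chi$ by orthogonality. Nontrivial characters then enter through factors $(|s_\chi|/n)^{k_n-2}$, which become negligible once $k_n \to \infty$. Isolating the trivial-character contribution yields
$$
\frac{\det\bigl(B_n[Y_\phi]^T B_n[Y_\phi]\bigr)}{\det(B_n^T B_n)} \;=\; |H|^{-n}\bigl(1+o(1)\bigr),
$$
and since $\#\Sur(\Z^n,H) = |H|^n(1+o(1))$, the typical $\phi$ alone deliver the target limit $1$ in \eqref{plan-target}.

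The main obstacle---and where the three hypotheses on $k_n$ must be paid for---is controlling the aggregate contribution of \emph{atypical} $\phi$. I would stratify such $\phi$ by the character $\chi^* \neq \chi_1$ that maximizes $|s_{\chi^*}|/n$, and pit a large-deviation count of $\phi$ in each stratum (from the multinomial distribution of the profile $(n_h)$) against an upper bound for the determinant ratio, which, from the matrix-determinant-lemma expansion, behaves like $(|s_{\chi^*}|/n)^{c k_n}$ for some $c>0$. Here (i) the condition $p \nmid k_n$ for $p \in \cP$ kills the degeneracy in which $\phi$ is constant on $H_p$ and $k_n h = 0$ would force $Y_\phi = [n]^{k_n}$; (ii) $k_n < n^\ep$ keeps the combinatorial prefactors and the $k_n^{n+1}$ factor in the denominator subexponential; (iii) the lower bound $k_n > \dt\log\log n$ at $p=2$ is precisely what forces $(|s_\chi|/n)^{k_n-2}$ to decay fast enough to overwhelm the $\Z/2$-valued $\phi$ lying just off the exactly-balanced configuration (which are $\Theta(n)$-close in Hamming distance to the unique degenerate profile). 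Balancing (a) and (b) uniformly across all strata is the technical heart of the proof; the case $p=2$, where the atypical cluster is densest, is the place I expect the argument to be hardest.
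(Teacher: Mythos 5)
Your high-level outline agrees with the paper's: both reduce to surjective moments via Wood's theorem and compute $\bP(A_n q = 0)$ by Cauchy--Binet, then reorganize the resulting sum by the profile of the surjection. Your Fourier-inversion formula for $B_n[Y_\phi]^T B_n[Y_\phi]$ is a valid and arguably cleaner repackaging of the block decomposition in Lemma \ref{lem: prob expression}: your diagonal matrix $D_\phi$ has entries $n(k_n-1)_{q_i}$ in the paper's notation, and the rank-$\le|H|$ correction encodes the paper's small matrix $M_{\unl{n}}$.

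The genuine gap is in the treatment of atypical surjections, which you correctly call the technical heart and do not carry out. Stratifying by the single dominant nontrivial character $\chi^*$ is not the right structure: when the profile concentrates near the uniform measure on a proper subgroup $H' < G$, \emph{all} characters trivial on $H'$ have $|s_\chi|/n$ near $1$ simultaneously, and the determinant ratio does not decay at all. The paper's structural step (Lemma \ref{lem: Mes lemma 10}) shows that whenever $D_{\KL}(\nu_{\unl{n}}\|\mu_{\unl{n}})$ is small, every Fourier coefficient $\hat{\nu}(\rho)$ is near a root of $f(z)=z-\overline{z^{k_n-1}}$; condition (1), $\gcd(|G|,k_n)=1$, rules out proximity to the roots $e^{2\pi i\ell/k_n}$, so $\hat{\nu}(\rho)$ must be near $0$ or $1$ and hence $\nu_{\unl{n}}$ is near $\nu_{H'}$ for some subgroup $H'$. (Your stated reason for condition (1) misses this; the actual issue is resonance between $k_n$-th roots of unity and character values, not the degenerate constant $\phi$ alone.) The sum then decomposes over subgroup balls $B(n,H')$ (Proposition \ref{prop: moments sum splits}), and for proper $H'$ one needs the further split $B_1(n,H') \cup B_2(n,H')$. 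Condition (3) is paid for precisely on $B_1(n,H')$ (Lemma \ref{lem: DKL bound}, Proposition \ref{prop: moment for B1}): there the KL divergence is only of order $k_n/n$, giving a per-term factor $e^{-k_n/|G|}$, while $|B_1(n,H')|$ is polynomial in $k_n$ and $\log n$ times $\sqrt{n}^{\,|H'|-1}$; the sum is $o(1)$ exactly when $e^{k_n/|G|}$ beats $(\log n)^{O(1)}$, i.e.\ when $k_n \gg \log\log n$. Your sketch gestures at this, but without identifying the subgroup $H'$ and the $B_1/B_2$ split, the $\chi^*$-stratification cannot locate where the balance between large-deviation count and determinant decay actually occurs.
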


\begin{cor} \label{cor1b}
Let $p$ be a prime such that $p \nmid k_n$ for all sufficiently large $n$ and $G$ be a finite abelian $p$-group. Assume that a sequence $(k_n)_{n=1}^{\infty}$ satisfies the following:
\begin{enumerate}
    \item for every $\ep>0$, $k_n < n^{\ep}$ for all sufficiently large $n$;
    \item \label{p=2 cor1b} if $p=2$, then for every $\dt>0$, $\dt \log\log n < k_n$ for all sufficiently large $n$.
\end{enumerate}
If we regard $A_n$ as a random matrix defined over the ring of $p$-adic integers $\zp$, then 
$$
\lim_{n\to \infty} \bP\left(\cok(A_n) \cong G\right) = \nu_{\CL, p}(G).
$$
\end{cor}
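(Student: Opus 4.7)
The plan is to obtain Corollary \ref{cor1b} as the specialization of Theorem \ref{thm: cokernel distribution theorem 1_intro} to the single-prime set $\cP = \{p\}$. The hypotheses of Corollary \ref{cor1b} are precisely the hypotheses of Theorem \ref{thm: cokernel distribution theorem 1_intro} for this $\cP$: the divisibility condition $p \nmid k_n$ for large $n$ is condition (1), the sub-polynomial growth $k_n < n^{\ep}$ is condition (2), and in the case $p=2$ the lower bound $\dt \log \log n < k_n$ matches condition (3). Since $G$ is a finite abelian $p$-group, $|G|$ is a power of $p$, so $\cP = \{p\}$ contains all primes dividing $|G|$, as required.

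With this $\cP$, the direct sum $\bigoplus_{q \in \cP} \cok(A_n)_q$ reduces to the single summand $\cok(A_n)_p$, and since $G$ is a $p$-group we have $G_p = G$. Theorem \ref{thm: cokernel distribution theorem 1_intro} therefore yields
$$
\lim_{n \to \infty} \bP\bigl(\cok(A_n)_p \cong G\bigr) = \nu_{\CL, p}(G).
$$

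It remains to identify $\cok(A_n)_p$ with $\cok(A_n)$ computed over $\zp$. By construction, the random index set $X_n$ is supported only on $Y$ with $\det(B_n[Y]) \ne 0$, so $A_n$ is almost surely nonsingular over $\Z$, hence over $\zp$. For such $A_n$, the cokernel $\Z^n / A_n \Z^n$ is a finite abelian group whose $p$-Sylow subgroup equals $(\Z^n/A_n\Z^n) \otimes_\Z \zp = \zp^n / A_n \zp^n$, the cokernel of $A_n$ viewed over $\zp$. Hence
$$
\bP\bigl(\cok(A_n) \cong G \text{ as a }\zp\text{-module}\bigr) = \bP\bigl(\cok(A_n)_p \cong G\bigr),
$$
and the corollary follows. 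There is no genuine obstacle here; the only point to check is the almost-sure nonsingularity of $A_n$, which is immediate from the definition of the determinantal measure.
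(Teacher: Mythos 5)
Your proposal is correct and takes essentially the same route as the paper: the paper explicitly presents Corollary \ref{cor1b} as a consequence of the same moment computation (Theorem \ref{thm: moment theorem 1_intro}) and Wood's theorem that yields Theorem \ref{thm: cokernel distribution theorem 1_intro}, and specializing the latter to $\cP = \{p\}$ is the natural way to organize that. Your identification of $\cok(A_n)$ over $\zp$ with $\cok(A_n)_p$ via the (actually sure, not just almost-sure) nonsingularity of $A_n$ under the determinantal measure is the right bookkeeping and fills in the one small point the paper leaves implicit.
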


In Theorem \ref{thm: cokernel distribution theorem 1_intro}, we assume that $k_n$ does not grow too slowly when $2 \in \cP$. In particular, $k_n$ should not be a constant when we consider the $2$-Sylow subgroup of $\cok(A_n)$. This assumption is necessary in the following respect. 

When $k_n=3$ for all $n$, Mészáros \cite[Remark 5.6]{Mes23} observed that $\bE(\# \Sur(\cok(A_n), \Z/2\Z)) \ge 1+4e^{-2} + o(1)$, and based on this, predicted that the $2$-Sylow subgroup of $\cok(A_n)$ does not converge to CL, just as the $2$-Sylow subgroup of $H_1(C_n)$ does not converge to CL.
In Section \ref{Sec_not CL}, we confirm this prediction by showing more generally that the distribution of $\dim_{\bF_2} \ker \ol{A_n}$ has heavier tail than the Cohen--Lenstra distribution when $k_n \ge 3$ is a fixed odd positive integer for all $n$. 
Here and below, we write $\ol{A_n}$ for the reduction of $A_n$ modulo $2$.

\begin{thm} \label{thm7a_copy}
(Theorem \ref{thm7a}) Let $k \ge 3$ be an odd integer, $k_n=k$ for all $n$ and $r$ be a positive integer. Then for all sufficiently large $n$, 
\begin{equation*}
\bP(\dim_{\bF_2} \ker \ol{A_n} \ge r) 
\ge \frac{1}{4r!} \left ( \frac{2(k-1)}{e^{k-1}} \right )^r.
\end{equation*}
In particular, $\cok(A_n)_2$ does not converge to CL (for $p=2$). 
\end{thm}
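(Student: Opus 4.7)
My plan is to bound $\dim_{\bF_2} \ker \ol{A_n}$ below by a count of ``coordinate witnesses'' in the left kernel, and then analyze this count via the determinantal structure of $X_n$.

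First I would observe that for each $c \in [n]$, the standard basis vector $e_c \in \bF_2^n$ lies in the left kernel of $\ol{A_n}$ exactly when every row of $X_n$ contains $c$ an even number of times; equivalently $X_n \subseteq G_c$, where $G_c := \{(b_1, \ldots, b_k) \in [n]^k : \#\{t : b_t = c\} \equiv 0 \pmod 2\}$. Since the $\{e_c\}_{c \in [n]}$ are linearly independent in $\bF_2^n$, setting $M := \#\{c \in [n] : X_n \subseteq G_c\}$ gives $\dim_{\bF_2} \ker \ol{A_n} \ge M$, so it suffices to prove $\bP(M \ge r) \ge \frac{1}{4r!}\bigl(\frac{2(k-1)}{e^{k-1}}\bigr)^r$ for all $n$ sufficiently large.

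Next I would compute the factorial moments of $M$ via the DPP structure. For any $S \subseteq [n]$, $\bP(X_n \subseteq \bigcap_{c \in S} G_c) = \det(I - K|_{A_S})$, where $A_S := \bigcup_{c \in S} G_c^c$ and $K = B_n(B_n^T B_n)^{-1}B_n^T$ is the DPP projection kernel. Starting from the explicit identity $B_n^T B_n = kn^{k-1} I + k(k-1)n^{k-2} J$, one obtains the formula $K(y, y') = (w_y \cdot w_{y'})/(kn^{k-1}) - (k-1)/n^{2k-2}$, with $w_y \in \Z^n$ the coordinate-count vector of $y$. A careful combinatorial calculation of the traces yields $\mathrm{tr}(K|_{A_c}) = k - 2(k-1)/n + O(1/n^2)$ and $\mathrm{tr}((K|_{A_c})^m) \to 1$ for each $m \ge 2$. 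This pins down the spectrum of $K|_{A_c}$ asymptotically: one eigenvalue $\lambda_1 = 1 - 2(k-1)/n + o(1/n)$ plus many small eigenvalues summing to $k-1$. Hence $\bP(X_n \subseteq G_c) \sim (2(k-1)/n)\cdot e^{-(k-1)}$, giving $\bE[M] \to \mu := 2(k-1)/e^{k-1}$. Extending the analysis to $|S| = r$---where $r$ near-$1$ eigenvalues emerge from $r$ approximately decoupled coordinate directions---gives $\bE[M^{[r]}] \to \mu^r$, so by the method of factorial moments $M$ converges in distribution to $\mathrm{Poi}(\mu)$.

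Poisson convergence then yields $\bP(M \ge r) \to \bP(\mathrm{Poi}(\mu) \ge r) \ge \mu^r e^{-\mu}/r!$, so $\bP(M \ge r) \ge \mu^r e^{-\mu}/(2 r!)$ for all $n$ sufficiently large. Since $\mu \le 4/e^2 < \ln 2$ for all odd $k \ge 3$, we have $e^{-\mu} \ge 1/2$, yielding the theorem's bound $\bP(\dim_{\bF_2} \ker \ol{A_n} \ge r) \ge \mu^r/(4r!)$. The non-convergence to CL is then immediate: $\mu^r/(4r!)$ decays like $\exp(-r \log r)$, while the Cohen--Lenstra tail $\nu_{\CL, 2}(\dim \ge r) \sim c \cdot 2^{-r^2}$ decays like $\exp(-r^2 \log 2)$, so for any fixed sufficiently large $r$ the theorem's lower bound strictly exceeds $\nu_{\CL, 2}(\dim \ge r)$, precluding convergence to the Cohen--Lenstra distribution.

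The main obstacle will be the joint spectral analysis of $K|_{A_S}$ for general $|S| = r$: showing that the $r$ near-$1$ eigenvalues from different coordinates decouple asymptotically so that $\bE[M^{[r]}] \to \mu^r$ rather than something smaller. This will require careful combinatorial tracking exploiting the $S_n$-symmetry of the setup.
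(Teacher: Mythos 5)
Your starting observation matches the paper's: count the indices $c$ for which the $c$-th column of $\ol{A_n}$ is zero, since each such column contributes a kernel vector $e_c$. (A small slip: a zero column $c$ gives $\ol{A_n}e_c = 0$, so $e_c$ lies in the kernel, not the left kernel; for a square matrix the dimensions agree, but the paper's phrasing is the correct one.) From there your route diverges substantially. The paper works with the more restrictive event $T_{n,c}$, namely that the $c$-th column of $B_n[K]$ equals $2e_j^T$ for some $j$ (exactly one row has $c$ exactly twice, every other row avoids $c$). This is a proper subset of your event $\{X_n\subseteq G_c\}$, but its payoff is that after a suitable row ordering, $K\in T_{n,i_1}\cap\cdots\cap T_{n,i_r}$ forces $B_n[K]$ into a block upper-triangular form with a $2I_r$ block, so Cauchy--Binet yields the exact closed-form identity $p(T_{n,i_1}\cap\cdots\cap T_{n,i_r})=(2k(k-1)(n-r)^{k-2})^r\,C_{n-r,k}/C_{n,k}$ (Lemma~\ref{lem7e}), and a two-term Bonferroni inequality (Lemma~\ref{lem7d}) then finishes the proof with only asymptotics at levels $r$ and $r+1$ and no factorial-moment machinery. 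You instead propose to compute $\bP(X_n\subseteq\bigcap_{c\in S}G_c)=\det(I-K|_{\bigcup_{c\in S}G_c^c})$ via the DPP projection kernel and spectral asymptotics, then establish Poisson convergence of $M$; the single-coordinate analysis and the final asymptotic constant $\mu = 2(k-1)/e^{k-1}$ agree with the paper's.

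The genuine gap is exactly the one you flag: the assertion that the $r$ near-unit eigenvalues of $K|_{\bigcup_{c\in S}G_c^c}$ ``decouple asymptotically'' so that $\bE[M^{[r]}]\to\mu^r$ for every $r$ is a heuristic, not a proof, and it is the hardest step of your plan. The sets $G_{c}^c$ for different $c\in S$ overlap (a tuple can contain two distinguished coordinates an odd number of times each), so the restricted kernel is not a direct sum, and controlling all its eigenvalues uniformly in $r$ and $n$ would require substantial extra work. The paper's choice of the stricter event $T_{n,c}$ is precisely what eliminates this joint analysis: the $r$-fold intersection factorizes exactly. If you want to salvage your strategy with less effort, either (a) tighten $\{X_n\subseteq G_c\}$ to $T_{n,c}$ and exploit the factorized Cauchy--Binet formula directly, or (b) keep your looser events but drop Poisson convergence in favor of the same Bonferroni truncation, which only requires asymptotic upper and lower control of $\bP(X_n\subseteq\bigcap_{c\in S}G_c)$ at the two levels $|S|=r$ and $|S|=r+1$.
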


According to Wood \cite[Theorem 3.1]{Woo19}, the limiting distribution of random finitely generated abelian groups is uniquely determined by their (surjective) moments if the moments are not too large. Consequently, Theorem \ref{thm: cokernel distribution theorem 1_intro} (and Corollary \ref{cor1b}) follows from the next theorem. For two groups $G_1$ and $G_2$, denote the set of all surjective group homomorphisms from $G_1$ to $G_2$ by $\Sur(G_1, G_2)$.

\begin{thm}
\label{thm: moment theorem 1_intro}
(Theorem \ref{thm: moment theorem 1}) Let $G$ be a finite abelian group. Assume that a sequence $(k_n)_{n=1}^{\infty}$ satisfies the following:
\begin{enumerate}
    \item $\gcd(|G|, k_n)=1$ for all sufficiently large $n$;
    \item for every $\ep>0$, $k_n < n^{\ep}$ for all sufficiently large $n$;
    \item if $|G|$ is even, then for every $\dt>0$, $\dt \log\log n < k_n$ for all sufficiently large $n$.
\end{enumerate}
Then
\begin{equation}
\label{eq: moment 1}
\lim_{n\to \infty} \bE(\# \Sur(\cok(A_n), G)) = 1.     
\end{equation}
\end{thm}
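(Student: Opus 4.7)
The plan is to compute the moment via Cauchy--Binet, Fourier-decompose the resulting determinants using characters of $G$, and bound the fluctuations by a balance-based case analysis on $\phi\in G^n$. Identify a homomorphism $\phi:\Z^n\to G$ with the tuple $(g_1,\ldots,g_n)=(\phi(e_1),\ldots,\phi(e_n))\in G^n$. Using $\cok(A_n)\cong \cok(A_n^T)$, the map $\phi$ factors through $\cok(A_n)$ iff $X_n\subseteq V_\phi$, where $V_\phi:=\{(b_1,\ldots,b_{k_n})\in[n]^{k_n}: g_{b_1}+\cdots+g_{b_{k_n}}=0\text{ in }G\}$. Hence
\[
\bE(\#\Sur(\cok(A_n),G))=\sum_{\phi\in\Sur(\Z^n,G)}\bP(X_n\subseteq V_\phi),
\]
and Cauchy--Binet gives $\bP(X_n\subseteq V_\phi)=\det(M_\phi)/\det(B_n^T B_n)$ with $M_\phi:=B_n[V_\phi]^T B_n[V_\phi]$. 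A direct calculation shows $B_n^T B_n=k_n n^{k_n-2}(nI+(k_n-1)J)$, whence $\det(B_n^T B_n)=k_n^{n+1}n^{(k_n-1)n}$.

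Next I Fourier-expand $\mathbf{1}_{V_\phi}$ using characters of $G$. Setting $S(\chi)=\sum_i\chi(g_i)$, $w_\chi=(\chi(g_1),\ldots,\chi(g_n))^T$, and $D_{\chi^2}=\mathrm{diag}(\chi(g_i)^2)$, the trivial character contributes $|G|^{-1}B_n^TB_n$ to $M_\phi$ while the nontrivial characters contribute
\[
E_\phi=\frac{k_n}{|G|}\sum_{\chi\neq\chi_0}\bigl[S(\chi)^{k_n-1}D_{\chi^2}+(k_n-1)S(\chi)^{k_n-2}w_\chi w_\chi^T\bigr].
\]
With $M_0:=|G|^{-1}B_n^TB_n$ I obtain $\bE(\#\Sur(\cok(A_n),G))=|G|^{-n}\sum_{\phi\in\Sur(\Z^n,G)}\det(I+M_0^{-1}E_\phi)$, and since $|\Sur(\Z^n,G)|/|G|^n=1+o(1)$, the theorem reduces to showing $\bE_{\phi\sim\mathrm{Unif}(G^n)}[\det(I+M_0^{-1}E_\phi)]\to 1$.

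For this average I partition $G^n$ by type $\lambda=(N_a)_{a\in G}$ with $N_a=|\{i:g_i=a\}|$. For \emph{balanced} $\phi$, meaning $|S(\chi)|\le C\sqrt{n\log n}$ for every nontrivial $\chi$ (a $(1-o(1))$-fraction of $G^n$ by a Hoeffding bound on the $\chi(g_i)$), the spectral estimates $\|M_0^{-1}\|_{\mathrm{op}}=O(|G|/(k_n n^{k_n-1}))$ and $\|E_\phi\|_{\mathrm{op}}=O(k_n n\max_\chi|S(\chi)|^{k_n-2}/|G|)$ yield $\|M_0^{-1}E_\phi\|_{\mathrm{op}}=o(1)$, hence $\det(I+M_0^{-1}E_\phi)=1+o(1)$. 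For \emph{unbalanced} $\phi$, I bound $\det(M_\phi)\le(\Tr(M_\phi)/n)^n$ by AM--GM and suppress the total contribution using the multinomial bound $\binom{n}{\lambda}/|G|^n\le\exp(-nD(\lambda\|\mathrm{unif}))$. The coprimality hypothesis $\gcd(|G|,k_n)=1$ rules out nontrivial $k_n$-torsion shifts of $\phi$ preserving $V_\phi$, and hypothesis (2) keeps polynomial factors in $k_n$ subpolynomial.

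The main obstacle is the case $2\mid|G|$. A real character $\chi$ of order $2$ then produces fluctuations $|S(\chi)|^{k_n-2}\sim n^{(k_n-2)/2}$ that persist even for balanced $\phi$; summed over $\phi$, the corresponding error is of order $k_n^c(\log n/n)^{(k_n-2)/2}$ up to polynomial factors, and this decays to $0$ precisely when $k_n$ grows slightly faster than a constant, which is exactly hypothesis (3), $k_n>\dt\log\log n$. Carefully tracking all higher-order terms in the expansion of $\det(I+M_0^{-1}E_\phi)$ and verifying the multinomial suppression for unbalanced types in parallel will be the most delicate part of the plan.
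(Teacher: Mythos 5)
Your high-level plan shares the paper's scaffolding (moments formula, Cauchy--Binet, balanced vs.\ unbalanced partition of $G^n$, and the correct identification that order-$2$ characters are what forces hypothesis (3) when $|G|$ is even), but you execute it through a genuinely different route: you Fourier-expand the full $n\times n$ Gram matrix $M_\phi = B_n[V_\phi]^T B_n[V_\phi]$ into a character sum and try to control $\det(I+M_0^{-1}E_\phi)$ spectrally, whereas the paper (following M\'esz\'aros) reduces $\det(B_{n,q}^T B_{n,q})$ exactly to a $|G_+|\times|G_+|$ matrix $M_{\unl n}$ depending only on the type $\unl n$ (Lemma \ref{lem: prob expression}), and then runs an information-theoretic analysis via $D_{\KL}(\nu\|\mu)$, Pinsker, and a delicate root-finding study of $f(z)=z-\ol{z^{k-1}}$ (Lemma \ref{lem: Mes lemma 10}) to localize the surviving types into $H$-nearly-uniform balls $B(n,H)$.

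There are two substantive gaps in your plan. First, the inference ``$\|M_0^{-1}E_\phi\|_{\mathrm{op}}=o(1)$, hence $\det(I+M_0^{-1}E_\phi)=1+o(1)$'' is false for $n\times n$ matrices: $T=n^{-1/2}I$ has $\|T\|_{\mathrm{op}}\to 0$ yet $\det(I+T)=(1+n^{-1/2})^n\to\infty$. Operator-norm smallness controls nothing unless you also control $\Tr(M_0^{-1}E_\phi)$ and the Hilbert--Schmidt norm; in this problem the trace is small only because of nontrivial cancellations of the form $\sum_i\chi(g_i)=S(\chi)$, which your sketch does not invoke (and your $D_{\chi^2}$ should be $D_\chi=\mathrm{diag}(\chi(g_i))$). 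The paper sidesteps this entirely by using the exact $|G|\times|G|$ determinant formula together with a local CLT/Gaussian integral over $B(n,G)$ (Proposition \ref{prop: main moment}), rather than a pointwise $1+o(1)$ claim on a determinant of size $n$.

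Second, the ``unbalanced'' half is too coarse. The AM--GM bound $\det(M_\phi)\le(\Tr M_\phi/n)^n$ is badly lossy in the intermediate regime where $\phi$ is close to uniform on a \emph{proper} subgroup $H$: there $\Tr M_\phi/n$ is essentially the same as at the uniform type, so the bound gives nothing like the needed $|G|^{-n}$ decay, and the multinomial factor $e^{-nD(\lambda\|\mathrm{unif})}$ alone is not small enough to suppress the $\binom{n}{\lambda}$ copies. This is precisely the regime the paper handles with the ball decomposition $\bigcup_H B(n,H)$, the KL bound $E(\unl n)\le k_n^{|G|}n^{(k_n-1)|G|}e^{-nD_{\KL}(\nu\|\mu)}$, and then the further split $B(n,H)=B_1(n,H)\sqcup B_2(n,H)$ with a separate estimate (Proposition \ref{prop: moment for B1}) that is exactly where hypothesis (3) enters. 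Relatedly, the coprimality hypothesis $\gcd(|G|,k_n)=1$ is not used in the paper to ``rule out $k_n$-torsion shifts''; it is used in Lemma \ref{lem: Mes lemma 10} to guarantee that the Fourier coefficients $\hat\nu(\rho)$ (which are $m$-th roots of unity up to averaging) stay bounded away from the spurious roots $e^{2\pi i\ell/k_n}$ of $f$, forcing $\hat\nu(\rho)$ to cluster near $0$ or $1$ and hence $\nu$ to concentrate near some $\nu_H$. You would need a replacement for this mechanism before your unbalanced analysis could close.
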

Note that it is necessary to assume that $\lim_{n \to \infty} k_n  = \infty$ to get \eqref{eq: moment 1} when $G= \Z/2\Z$ (see Proposition \ref{prop: kn has to go to infinity}). It would be interesting to know whether Theorem \ref{thm: moment theorem 1_intro} still holds under a weaker version of assumption (3), namely, only requiring that $\lim_{n \to \infty}k_n = \infty$. See also Proposition \ref{prop: partial answer}. 
A large part of the proof of Theorem \ref{thm: moment theorem 1_intro}
closely follows the approach of Mészáros in \cite{Mes23}; however, a more careful analysis was required to keep track of the effect of
$k_n$, since $k_n$ depends on $n$ and may vary accordingly.

The paper is organized as follows. In Section \ref{Sec2}, we extend the results of \cite[Section 3-4]{Mes23} to a large class of sequences $(k_n)_{n=1}^{\infty}$. 
In particular, Proposition \ref{prop: moments sum splits} reduces Theorem \ref{thm: moment theorem 1_intro} to proving 
\begin{equation} \label{eq_main_Sec1}
\lim_{n \to \infty} \sum_{\unl{n} \in B(n, G)} E(\unl{n}) = 1
\end{equation}
and 
\begin{equation} \label{eq_error_Sec1}
\lim_{n \to \infty} \sum_{\unl{n} \in B(n, H)} E(\unl{n}) = 0.
\end{equation}
See \eqref{eq: nearly-unifrom ball} for the definition of $H$-nearly-uniform ball $B(n,H)$.

We prove \eqref{eq_main_Sec1} in Section \ref{Sec_main term} and \eqref{eq_error_Sec1} in Section \ref{Sec_B1(n,H)} and \ref{Sec_B2(n,H)}. More precisely, we divide the set $B(n, H)$ into two parts $B_1(n,H)$ and $B_2(n, H)$ and prove
$$
\lim_{n \to \infty} \sum_{\unl{n} \in B_1(n, H)} E(\unl{n}) = 0 \quad\text{and}\quad 
\lim_{n \to \infty} \sum_{\unl{n} \in B_2(n, H)} E(\unl{n}) = 0
$$
in Section \ref{Sec_B1(n,H)} and \ref{Sec_B2(n,H)}, respectively. 
Note that the set $B_1(n,H)$ is empty when $|G|$ is odd, and hence Section \ref{Sec_B1(n,H)} contains a new ingredient which was not presented in \cite{Mes23}. 
We finish the proofs of the main theorems in Section \ref{Sec_CL all primes} and prove Theorem \ref{thm7a_copy} in Section \ref{Sec_not CL}.

\section{Decomposition of the moments into sums of probabilities over nearly-uniform balls} \label{Sec2}

We begin by introducing some notation. Recall that $k_n \ge 3$ depends on $n$, however we often write $k=k_n$ when there is no danger of confusion. Let $\unl{n} = (n_a)_{a \in G}$ be a tuple in $\Z_{\ge 0}^{G}$ such that $\sum_{a \in G} n_a = n$ and let
$$
G_+(\unl{n}) = \{ a \in G : n_a > 0\}. 
$$
When $\unl{n}$ is clear from context, we often write
$$
G_+ = G_+(\unl{n}).
$$
For each $a \in G$, let $n(1)_a := n_{-a}$, and $n(\ell)_a := \sum_{b \in G} n_b n(\ell-1)_{a+b}$ for $2 \le \ell \le k_n-1$. Inductively, for every $a \in G$ and $1 \le \ell \le k_n-1$, we have
$$
n(\ell)_a = \sum_{\substack{b_1, \ldots, b_{\ell} \in G \\ b_1 + \cdots +b_{\ell} = -a}}  n_{b_1}n_{b_2}\cdots n_{b_{\ell}}. 
$$
When $k_n=3$, our definitions of $n(1)_a$ and $n(2)_a$ correspond to $n_{-a}$ and $m_a$ in \cite{Mes23}, respectively.

The matrix \textit{associated to} $\unl{n}$ is defined to be the $|G_+| \times |G_+|$ matrix $M = M_{\unl{n}}$ whose rows and columns are indexed by $G_+$ and the entries $M(a,b)$ ($a, b \in G_+$) are given by
$$
M(a,b) := \begin{cases}
(k_n-1)n_a n(k_n-2)_{2a} + n(k_n-1)_a &\text{if $a=b$}, \\
(k_n-1)\sqrt{n_an_b}n(k_n-2)_{a+b} &\text{if $a\neq b$}.
\end{cases}
$$

Let $G$ be a finite abelian group, $q = (q_1, q_2, \ldots, q_n) \in G^n$ and $a \in G$. Define
$$
S(\unl{n}) = \{ q \in G^n : \# \{ i \in [n] : q_i = a \} = n_a \text{ for every } a \in G \}
$$
and
$$
E(\unl{n}) = \sum_{q \in S(\unl{n})} \bP(A_nq = 0). 
$$
For $q \in S(\unl{n})$, let $M_q := M_{\unl{n}}$ and $G_+(q) := G_+(\unl{n})$. 

\begin{lem}
\label{lem: diagonal entry of M bound}
Let $\unl{n} = (n_a)_{a \in G}$. For $q \in S(\unl{n})$ and $a \in G_+(q)$, we have
$$
M_q(a,a) \le k_n n(k_n-1)_a. 
$$
\end{lem}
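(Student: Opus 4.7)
The plan is to handle both branches of the definition of $M_q(a,a)$ in a unified way. In each case, $M_q(a,a)$ decomposes as a sum of two nonnegative pieces, one of which is exactly $n(k_n-1)_a$ (the final summand in each formula). It therefore suffices to bound the remaining piece:
\[
(k_n-1)\, n(1)_a\, n(k_n-2)_{2a} \le (k_n-1)\, n(k_n-1)_a \qquad (k_n > 3),
\]
\[
2\, n(1)_a\, n(1)_{-2a} \le 2\, n(2)_a \qquad (k_n = 3),
\]
since then adding back $n(k_n-1)_a$ yields the target bound $k_n\, n(k_n-1)_a$.

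The key step is a one-line partial-sum identification. For $k_n \ge 4$, the expansion
\[
n(k_n-1)_a = \sum_{\substack{b_1, \ldots, b_{k_n-1} \in G \\ b_1 + \cdots + b_{k_n-1} = -a}} n_{b_1}\, n_{b_2} \cdots n_{b_{k_n-1}}
\]
restricted to summands with $b_1 = a$ forces $b_2 + \cdots + b_{k_n-1} = -2a$, and their contribution is precisely
\[
n_a \cdot \sum_{\substack{b_2, \ldots, b_{k_n-1} \in G \\ b_2 + \cdots + b_{k_n-1} = -2a}} n_{b_2} \cdots n_{b_{k_n-1}} \;=\; n(1)_a \cdot n(k_n-2)_{2a}.
\]
Since every other summand in $n(k_n-1)_a$ is nonnegative (each $n_b \ge 0$), this partial sum is bounded by the full sum, giving the required inequality. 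For $k_n = 3$, the same argument applied to $n(2)_a = \sum_{b \in G} n_b\, n_{-a-b}$ isolates the single summand with $b = a$, equal to $n_a n_{-2a} = n(1)_a\, n(1)_{-2a}$, which is again bounded by the full sum $n(2)_a$.

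There is no real obstacle here; the only mild subtlety is the notational split between the $k_n = 3$ and $k_n > 3$ definitions of $M_q$, resolved by recognizing that both $n(k_n-2)_{2a}$ (for $k_n > 3$) and $n(1)_{-2a}$ (for $k_n = 3$) encode the sum over the remaining $k_n - 2$ coordinates with prescribed total $-2a$. Once this identification is made, both cases reduce to the identical partial-sum bound above.
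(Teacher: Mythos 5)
Your proof is correct and uses essentially the same argument as the paper: isolate the terms in the convolution sum defining $n(k_n-1)_a$ for which one coordinate equals $a$, observe that their contribution is exactly $n(1)_a\,n(k_n-2)_{2a}$, and discard the remaining nonnegative terms. The paper fixes the last coordinate to be $a$ while you fix the first, which is the same inequality by symmetry; the only cosmetic difference is that you write out the $k_n=3$ case explicitly whereas the paper remarks that it follows similarly.
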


\begin{proof}
We have
\begin{align*}
n(k_n-1)_a & = \sum_{\substack{b_1, \ldots, b_{k_n-1} \in G \\ b_1 + \cdots +b_{k_n-1} = -a}}  n_{b_1}n_{b_2}\cdots n_{b_{k_n-1}} \\
& \ge \sum_{\substack{b_1, \ldots, b_{k_n-2} \in G \\ b_1 + \cdots +b_{k_n-2} + a = -a}}  n_{b_1}n_{b_2}\cdots n_{b_{k_n-2}}n_a \\ 
& = n(1)_a \sum_{\substack{b_1, \ldots, b_{k_n-2} \in G \\ b_1 + \cdots +b_{k_n-2} = -2a}}  n_{b_1}n_{b_2}\cdots n_{b_{k_n-2}} \\
& = n(1)_an(k_n-2)_{2a}
\end{align*}
so $M_q(a,a) = (k_n-1)n(1)_a n(k_n-2)_{2a} + n(k_n-1)_a \le k_n n(k_n-1)_a$.
\end{proof}

We recall that (see Section 2.2 of \cite{Mes23})
\begin{equation}
\label{eq: moment is sum of probability}
\bE(\#\Sur(\cok(A_n), G)) = \sum_{q} \bP(A_nq = 0),    
\end{equation}
where the sum is over all $q = (q_1, \ldots, q_n) \in G^n$ such that $q_1, \ldots, q_n$ generate $G$. 
In order to make use of \eqref{eq: moment is sum of probability}, we first find a formula for $\bP(A_nq = 0)$.

\begin{lem}
\label{lem: prob expression}
Let $\unl{n}=(n_a)_{a \in G}$ and $q \in S(\unl{n})$. Then
$$
\bP(A_nq = 0) = \frac{1}{k_n}n^{-(k_n-1)n} \det(M) \prod_{a \in G_+} (n(k_n-1)_a^{n_a -1}). 
$$
Moreover, the matrix $M$ is positive semi-definite. 
\end{lem}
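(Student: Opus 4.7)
The plan is to evaluate $\bP(A_n q = 0)$ via the Cauchy--Binet formula and then identify $M$ inside the resulting Gram matrix. Since $X_n$ is drawn with probability $\det(B_n[Y])^2 / \det(B_n^T B_n)$, and $A_n q = 0$ in $G^n$ is equivalent to requiring each row $(b_1, \ldots, b_{k_n}) \in [n]^{k_n}$ of $B_n[X_n]$ to satisfy $q_{b_1} + \cdots + q_{b_{k_n}} = 0$ in $G$, Cauchy--Binet gives
\[
\bP(A_n q = 0) = \frac{\det\bigl( (B_n^{(q)})^T B_n^{(q)} \bigr)}{\det(B_n^T B_n)},
\]
where $B_n^{(q)}$ is the submatrix of $B_n$ supported on $R_q := \{(b_1, \ldots, b_{k_n}) \in [n]^{k_n} : q_{b_1} + \cdots + q_{b_{k_n}} = 0\}$. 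Writing $k = k_n$, a direct count gives $B_n^T B_n = k n^{k-1} I_n + k(k-1) n^{k-2} J_n$ with $J_n$ the all-ones matrix, so $\det(B_n^T B_n) = k^{n+1} n^{(k-1)n}$ from the eigenvalues of $J_n$.

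For the numerator, I would partition $[n]$ into fibers $V_a := \{i : q_i = a\}$ of size $n_a$ and expand
\[
\bigl( (B_n^{(q)})^T B_n^{(q)} \bigr)_{ij} = \sum_{(b_1, \ldots, b_k) \in R_q} \bigl|\{\ell : b_\ell = i\}\bigr| \cdot \bigl|\{\ell : b_\ell = j\}\bigr|,
\]
separating pairs of positions $(\ell, m)$ into $\ell = m$ and $\ell \neq m$. Pinning two positions to $i$ and $j$ leaves $k-2$ free positions whose $q$-classes must sum to $-a - b$, producing $n(k-2)_{a+b}$ many tuples (for $k_n > 3$ directly, and for $k_n = 3$ this becomes the count $n_{-a-b} = n(1)_{-a-b}$). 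A short count then yields, for $i \in V_a$ and $j \in V_b$, off-diagonal entries $k(k-1)\, n(k-2)_{a+b}$ and diagonal entries $k\, n(k-1)_a + k(k-1)\, n(k-2)_{2a}$. Equivalently,
\[
(B_n^{(q)})^T B_n^{(q)} = D + k(k-1)\, \widetilde{P} R \widetilde{P}^T,
\]
where $D$ is the diagonal matrix with $D_{ii} = k\, n(k-1)_{q_i}$, $\widetilde{P}$ is the $n \times |G_+|$ indicator matrix of the partition, and $R_{ab} = n(k-2)_{a+b}$.

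The crux is now to decompose $\bR^n = W_1 \oplus W_2$ orthogonally with $W_1 = \mathrm{span}\{\mathbf{1}_{V_a}/\sqrt{n_a}\}_{a \in G_+}$. For $v \in W_2$ one has $\widetilde{P}^T v = 0$, so $(B_n^{(q)})^T B_n^{(q)}$ acts on $W_2$ as $D$ alone, contributing eigenvalues $k\, n(k-1)_a$ of multiplicity $n_a - 1$ for each $a \in G_+$. In the orthonormal basis $\{\mathbf{1}_{V_a}/\sqrt{n_a}\}_{a \in G_+}$ of $W_1$, the matrix becomes
\[
\frac{1}{\sqrt{n_a n_b}}\, \mathbf{1}_{V_a}^T \bigl( (B_n^{(q)})^T B_n^{(q)} \bigr) \mathbf{1}_{V_b} = \delta_{ab}\, k\, n(k-1)_a + k(k-1) \sqrt{n_a n_b}\, n(k-2)_{a+b},
\]
which is exactly $kM$ (matching both the $k_n = 3$ and $k_n > 3$ definitions). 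Combining the two contributions,
\[
\det\bigl( (B_n^{(q)})^T B_n^{(q)} \bigr) = \det(kM) \prod_{a \in G_+} (k\, n(k-1)_a)^{n_a - 1} = k^n \det(M) \prod_{a \in G_+} n(k-1)_a^{n_a - 1},
\]
and dividing by $k^{n+1} n^{(k-1)n}$ gives the claimed formula. Positive semi-definiteness of $M$ is then immediate: $(B_n^{(q)})^T B_n^{(q)}$ is a Gram matrix and hence PSD, so its restriction to $W_1$ is PSD, i.e.\ $kM$ is PSD, hence so is $M$. The main obstacle is the bookkeeping in the second paragraph, where one must verify that the $\ell = m$ versus $\ell \neq m$ splitting combines with the $\sqrt{n_a n_b}$ normalization to reproduce the entries of $M$ exactly, including the slightly different convention appearing in the $k_n = 3$ case.
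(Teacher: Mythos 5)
Your proposal is correct and takes essentially the same route as the paper: Cauchy--Binet, an entrywise computation of the Gram matrix $(B_n^{(q)})^T B_n^{(q)}$, and the orthogonal decomposition of $\bR^n$ into the span of the normalized fiber indicators $\mathbf{1}_{V_a}/\sqrt{n_a}$ (on which the matrix restricts to $kM$) and its complement (on which it acts diagonally with eigenvalues $k\,n(k_n-1)_a$). The only cosmetic difference is that you compute $\det(B_n^T B_n)=k^{n+1}n^{(k_n-1)n}$ directly from the eigenvalues of $J_n$, whereas the paper recovers it by specializing $q=0$ in the numerator formula; you also handle the $k_n=3$ and $k_n>3$ conventions for $M$ in one pass rather than deferring the former to the cited reference.
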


\begin{proof}
We closely follow the proof of \cite[Lemma 3.1]{Mes23}. 
We write $k = k_n$ for convenience. Let
$$
I_q = \{(x_1, \ldots, x_k) \in [n]^k : q_{x_1} + \cdots + q_{x_k} = 0\}
$$
and $B_{n,q}$ be the submatrix of $B_n$ which consists of the rows with indices in $I_q$. By the Cauchy--Binet formula, we have
$$
\bP(A_n q =0) = \bP(X_n \subset I_q) = \sum_{\substack{K \subset I_q \\ |K| =n}} \frac{(\det(B_n[K]))^2}{\det(B_n^T B_n)} = \frac{\det(B_{n,q}^TB_{n,q})}{\det(B_n^T B_n)}.
$$
Following the proof of \cite[Lemma 3.1]{Mes23}, we deduce that
$$
B_{n,q}^TB_{n,q}(i,j) = \begin{cases}
k(k-1)n(k-2)_{2q_i} + kn(k-1)_{q_i} &\text{if $i=j$}, \\
k(k-1)n(k-2)_{q_i+q_j} &\text{if $i\neq j$}.
\end{cases}
$$
For $a \in G_+$, define the vector $w_a \in \bR^{[n]}$ so that
$$
w_a(i) = \begin{cases}
1/\sqrt{n_a} & \text{if $~q_i = a$,} \\
0 & \text{otherwise.}
\end{cases}
$$
Also, writing $\langle,\rangle$ for the standard inner product in $\bR^{[n]}$, we define the subspace
$$
W_a := \{v \in \bR^{[n]} : \langle v, w_a\rangle = 0, \text{ and } v(i) = 0 ~\text{for all $i$ with $q_i \neq a$}\}.
$$
Then $W_a$ is a subspace of $\bR^{[n]}$ which is invariant under $B_{n,q}^TB_{n,q}$, and the matrix $B_{n,q}^TB_{n,q}$ acts on $W_a$ as multiplication by $kn(k-1)_{a}$. 
Furthermore, it is straightforward to see that $kM$ is the matrix representation of the restriction of $B_{n,q}^TB_{n,q}$ to the subspace spanned by $(w_a)$, with respect to the basis $(w_a)$.
Following the proof of \cite[Lemma 3.1]{Mes23}, we see that
$$
\det(B_{n,q}^TB_{n,q}) = \det(kM)\prod_{a \in G_+} (kn(k-1)_a)^{n_a - 1} = k^n\det(M)\prod_{a \in G_+} (n(k-1)_a)^{n_a - 1}.
$$
Taking $q=0$ (so $n_0=n$ and $n_a=0$ for each $a \in G \backslash \{0 \}$) in the above equation, we have
\begin{equation}
\label{eq: det of BnTBn}
\det(B_n^T B_n) = \det(B_{n,0}^TB_{n,0}) = 
k^n \cdot kn^{k-1} \cdot (n^{k-1})^{n-1} = k^{n+1}n^{(k-1)n} 
\end{equation}
and hence the lemma is proved.
The fact that $M$ is positive semi-definite can be proved as in the proof of \cite[Lemma 3.1]{Mes23}. 
\end{proof}

By Lemma \ref{lem: prob expression} and the formula $|S(\unl{n})| = \frac{n!}{\prod_{a \in G} n_a!}$, we have
\begin{equation}
\label{eq: E formula 1}
E(\unl{n}) = \frac{n!}{\prod_{a \in G} n_a!}\frac{n^{-(k_n-1)n}\det(M)}{k_n}\prod_{a \in G_+(\unl{n})} n(k_n-1)_a^{n_a - 1}. 
\end{equation}

\begin{lem}
\label{lem: E equal to 0 case}
For a given $\unl{n}$, suppose that there exists $a \in G_+$ such that $n(k_n-1)_a = 0$. Then $E(\unl{n}) = 0$. 
\end{lem}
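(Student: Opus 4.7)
The plan is to show that $\det(M_{\unl{n}}) = 0$ whenever the hypothesis holds, and then conclude from formula \eqref{eq: E formula 1} that $E(\unl{n}) = 0$. Note that when $n_a \ge 2$, the factor $n(k_n-1)_a^{n_a-1}$ already vanishes in \eqref{eq: E formula 1}, so the only case with real content is $n_a = 1$; the argument below nevertheless handles both cases uniformly via the determinant.

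First, I would pin down the value of the diagonal entry $M(a,a)$. By Lemma \ref{lem: diagonal entry of M bound}, we have $M(a,a) \le k_n\, n(k_n-1)_a = 0$. On the other hand, reading off the definition of $M(a,a)$ in both cases $k_n > 3$ and $k_n = 3$, the diagonal entry is a sum of products of non-negative integers, so $M(a,a) \ge 0$. Hence $M(a,a) = 0$.

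Next, I would invoke the fact that $M$ is positive semi-definite, which is part of Lemma \ref{lem: prob expression}. A standard consequence of positive semi-definiteness is that a zero diagonal entry of a symmetric PSD matrix forces the entire row and column through it to vanish: for any $b \in G_+$, the $2 \times 2$ principal minor indexed by $\{a,b\}$ must be non-negative, and since $M(a,a) = 0$ and $M$ is symmetric this amounts to $-M(a,b)^2 \ge 0$, giving $M(a,b) = 0$. Thus the $a$-th row of $M$ is identically zero, so $\det(M) = 0$, and substituting into \eqref{eq: E formula 1} yields $E(\unl{n}) = 0$.

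No step here poses a serious obstacle: the argument is essentially a one-line observation that the hypothesis forces $M(a,a)$ to be simultaneously $\le 0$ (by Lemma \ref{lem: diagonal entry of M bound}) and $\ge 0$ (by non-negativity of the summands defining it). The role of positive semi-definiteness is precisely to upgrade the vanishing of a single diagonal entry into the vanishing of an entire row — which is what is needed in the delicate case $n_a = 1$, where the product factor $n(k_n-1)_a^{n_a-1}$ does not itself vanish.
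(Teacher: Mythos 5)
Your proof is correct, but it takes a genuinely different route from the paper's. The paper expands $n(k_n-1)_a = \sum_{b\in G} n_b\, n(k_n-2)_{a+b}$ and observes that vanishing of the left side forces $n(k_n-2)_{a+b}=0$ for every $b\in G_+$; since each off-diagonal entry $M(a,b)$ carries the factor $n(k_n-2)_{a+b}$ (and $M(a,a)$ the factor $n(k_n-2)_{2a}$, plus $n(k_n-1)_a$ itself), the entire $a$-th row vanishes directly from the explicit formula. The $k_n=3$ case is handled by citing the corresponding lemma from Mészáros. Your route instead combines Lemma~\ref{lem: diagonal entry of M bound} (to get $M(a,a)\le 0$) with nonnegativity of the summands (to get $M(a,a)\ge 0$), then leans on the positive semi-definiteness of $M$ from Lemma~\ref{lem: prob expression} to upgrade a vanishing diagonal entry to a vanishing row via the $2\times 2$ principal minor. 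Your approach is more structural and has the minor advantage of treating $k_n=3$ and $k_n>3$ uniformly without citing the external reference; the paper's approach is more elementary in that it does not invoke positive semi-definiteness. Both correctly yield $\det M = 0$, hence $E(\unl{n})=0$ by \eqref{eq: E formula 1}.
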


\begin{proof}
We have
$n(k_n-1)_a = \sum_{b \in G}n_bn(k_n-2)_{a+b}$ so $n(k_n-1)_a=0$ implies that $n(k_n-2)_{a+b}= 0$ for all $b \in G_+$. Then $M(a,b)=0$ for all $b \in G_+$ by the definition of $M$, so $\det(M) = 0$ and thus $E(\unl{n})=0$.
\end{proof}

Similarly to the proof of Mészáros \cite{Mes23}, the notion of Kullback--Leibler divergence of probability measures, which we define next, will play a key role in our proof. 

\begin{defn}
Let $\nu, \mu$ be probability measures on a finite set $S$. The \textit{Kullback--Leibler divergence} of $\nu$ and $\mu$ is defined by
$$
D_{\KL}(\nu||\mu) := \sum_{x \in S} \nu(x)\log\left(\frac{\nu(x)}{\mu(x)}\right),
$$
where we interpret the summand $\nu(x)\log\left(\frac{\nu(x)}{\mu(x)}\right)$ as $0$ when $\nu(x) = 0$ and $D_{\KL}(\nu||\mu)$ is defined to be $\infty$ when there is $x \in S$ such that $\nu(x) \neq 0$ and $\mu(x) = 0$. 
\end{defn}

Throughout the paper, the letters $\nu$ and $\mu$ will always denote probability measures $\nu_{\unl{n}}$ and $\mu_{\unl{n}}$ on $G$ with a given $\unl{n}$ as follows, unless stated otherwise. 

\begin{defn}
For a given $\unl{n} = (n_a)_{a \in G} \in \Z_{\ge 0}^{G}$ such that $\sum_{a \in G} n_a = n$, the probability measures $\nu_{\unl{n}}$ and $\mu_{\unl{n}}$ on $G$ are defined by
$$
\nu_{\unl{n}}(a) = \frac{n_a}{n} \quad\text{and}\quad \mu_{\unl{n}}(a) = \frac{n(k_n-1)_a}{n^{k_n-1}}.
$$
For every subgroup $H$ of $G$, a probability measure $\nu_H$ on $G$ is defined by
$$
\nu_H(a) = \begin{cases}
\frac{1}{|H|} & ~\text{if $a \in H$}, \\
0 & ~\text{if $a \not\in H$}.
\end{cases}
$$
\end{defn}
Suppose that $n(k_n-1)_a >0$ for all $a \in G_+ = G_+(\unl{n})$. Then we have
\begin{equation}
\label{eq: E formula 2}
E(\unl{n}) = \al(\unl{n})\frac{\det(M)}{k_n\prod_{a \in G_+}n(k_n-1)_a}\exp(-n D_{\KL}(\nu_{\unl{n}}||\mu_{\unl{n}}))
\end{equation}
where
$$
\al(\unl{n}) = \frac{n!}{\prod_{a \in G}n_a!}\exp\left(n\sum_{a \in G_+} \nu_{\unl{n}}(a)\log \nu_{\unl{n}}(a)\right) \le 1.
$$
(The inequality $\al(\unl{n}) \le 1$ follows from \cite[Lemma 2.2]{CS04}.)
Since $M$ is positive semi-definite and $\Tr M \le k_n n^{k_n-1}$, we have
$$
\det(M) \le (\Tr M)^{|G_+|} \le k_n^{|G|}n^{(k_n-1)|G|}
$$
and thus
\begin{equation}
\label{eq: E inequality}
E(\unl{n})  \le k_n^{|G|}n^{(k_n-1)|G|}\exp(-n D_{\KL}(\nu_{\unl{n}}||\mu_{\unl{n}})).    
\end{equation}

\begin{lem}
\label{lem: Mes lemma 10}
Assume that $\gcd(|G|,k_n)=1$. Let $\unl{n} = (n_a)_{a \in G} \in \Z_{\ge 0}^{G}$ such that $\sum_{a \in G} n_a = n$. Let $\nu = \nu_{\unl{n}}$ and $\mu = \mu_{\unl{n}}$.
Then there is a positive real number $C_n = O_G(k_n^4)$ (which does not depend on the choice of $\unl{n}$) and a subgroup $H$ of $G$ such that the following two conditions hold. 
\begin{enumerate}
\item 
$\left | \nu_H(a) - \nu(a) \right | \le C_n\sqrt{D_{\KL}(\nu || \mu)}$ for every $a \in G$.
\item 
$\nu(G \backslash H) \le C_n D_{\KL}(\nu || \mu)$.    
\end{enumerate}
\end{lem}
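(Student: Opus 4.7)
The plan is to proceed by Fourier analysis on the finite abelian group $G$. Writing $\widehat{G}$ for the dual group, the convolution identity
$$\widehat{\mu}(\chi) = \overline{\widehat{\nu}(\chi)}^{\,k_n - 1}$$
holds for every $\chi \in \widehat{G}$, since $\mu(a) = \sum_{b_1 + \cdots + b_{k_n-1} = -a} \nu(b_1) \cdots \nu(b_{k_n-1})$. Pinsker's inequality converts the hypothesis on the KL-divergence to a Fourier bound
$$|\widehat{\nu}(\chi) - \overline{\widehat{\nu}(\chi)}^{\,k_n - 1}| \leq \sqrt{2\, D_{\KL}(\nu \| \mu)}$$
for every $\chi$. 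Taking absolute values and applying elementary analysis to the real function $r \mapsto r - r^{k_n - 1}$ on $[0,1]$, one sees that $r = |\widehat{\nu}(\chi)|$ must lie in $[0, 2\sqrt{2D_{\KL}(\nu\|\mu)}]$ or in $[1 - \tfrac{2\sqrt{2D_{\KL}(\nu\|\mu)}}{k_n - 2}, 1]$.

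Next I would perform a phase analysis on the ``large'' characters. Define $S := \{\chi \in \widehat{G} : |\widehat{\nu}(\chi)| \geq 1/2\}$, and for $\chi \in S$ write $\widehat{\nu}(\chi) = re^{i\theta}$. The approximate equation above forces $e^{i k_n \theta}$ close to $1$. The coprimality hypothesis $\gcd(|G|, k_n) = 1$ then enters essentially: $\widehat{\nu}(\chi)$ is a convex combination of the $|G|$-th roots of unity $\{\chi(a)\}_a$, so the only $k_n$-th root of unity that such a combination can approximate with error $\ll 1/(k_n|G|)$ is $1$ itself (for any other candidate the gap to every $|G|$-th root of unity is $\gtrsim 1/(k_n|G|)$). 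This yields $|\widehat{\nu}(\chi) - 1| \leq O_G(k_n \sqrt{D_{\KL}(\nu\|\mu)})$ for $\chi \in S$, and identifies $S$ with the annihilator $H^\perp$ of the subgroup $H := \{a \in G : \chi(a) = 1 \text{ for all } \chi \in S\}$. When $D_{\KL}(\nu \| \mu)$ is above a threshold depending on $G$ and $k_n$, both conclusions of the lemma hold trivially by enlarging $C_n$.

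Finally I would invoke Fourier inversion
$$\nu(a) - \nu_H(a) = \frac{1}{|G|}\sum_{\chi \in \widehat{G}} \left(\widehat{\nu}(\chi) - \widehat{\nu_H}(\chi)\right) \overline{\chi(a)}$$
and split the sum according to $S$ versus $\widehat{G} \setminus S$. On $S$, $\widehat{\nu_H}(\chi) = 1$ and the phase analysis gives $|\widehat{\nu}(\chi) - 1| = O_G(k_n \sqrt{D_{\KL}(\nu\|\mu)})$; on $\widehat{G} \setminus S$, $\widehat{\nu_H}(\chi) = 0$ and $|\widehat{\nu}(\chi)| = O(\sqrt{D_{\KL}(\nu\|\mu)})$. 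Summing over the $|G|$ characters yields (1), with the total constant absorbed into $O_G(k_n^4)$ after accounting for the loss factors. For (2) I would write $\nu(G \setminus H) = \frac{1}{|S|}\sum_{\chi \in S}(1 - \widehat{\nu}(\chi))$ and use $\mathrm{Re}(1 - \widehat{\nu}(\chi)) = \sum_a \nu(a)(1 - \mathrm{Re}\chi(a)) \geq 0$ to bound the sum quadratically in the Fourier error, hence linearly in $D_{\KL}(\nu\|\mu)$.

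The main obstacle is the phase analysis: extracting the conclusion $\widehat{\nu}(\chi) \approx 1$ (rather than just $\widehat{\nu}(\chi) \approx $ some $k_n$-th root of unity) from the coprimality hypothesis, and tracking the resulting dependence on $k_n$. The exponent $4$ in $C_n = O_G(k_n^4)$ is likely not optimal; it accumulates from (a) the $1/(k_n - 2)$-type Bernoulli estimate in the magnitude bound and (b) the $1/(k_n|G|)$-scale gap between candidate $k_n$-th roots of unity and actual $|G|$-th roots of unity, which get inverted and combined in intermediate steps.
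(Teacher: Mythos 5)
Your proof sketch for part (1) follows essentially the same route as the paper: Pinsker, the convolution identity $\widehat{\mu}(\chi)=\overline{\widehat{\nu}(\chi)^{\,k_n-1}}$, a magnitude estimate for the map $z\mapsto z-\overline{z^{k_n-1}}$, a phase estimate using that $\widehat{\nu}(\chi)$ is a convex combination of $m$-th roots of unity (with $m=\exp(G)$) so the coprimality $\gcd(|G|,k_n)=1$ forces the nearby root to be $0$ or $1$, and finally Fourier inversion. That part is correct in structure and matches the source.

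However, your treatment of part (2) has a genuine gap. All that the Fourier analysis yields is $|\widehat{\nu}(\chi)-\widehat{\nu_H}(\chi)|=O(\delta)$ with $\delta\le 2\sqrt{D_{\KL}(\nu\|\mu)}$, so Fourier inversion (or equivalently your identity $\nu(G\setminus H)=\frac{1}{|S|}\sum_{\chi\in S}\bigl(1-\widehat{\nu}(\chi)\bigr)$) can only produce a bound of order $\sqrt{D_{\KL}(\nu\|\mu)}$ for $\nu(G\setminus H)$, not the required linear bound $O\bigl(D_{\KL}(\nu\|\mu)\bigr)$. Your claim that the non-negativity $\mathrm{Re}(1-\widehat{\nu}(\chi))=\sum_a\nu(a)(1-\mathrm{Re}\,\chi(a))\ge 0$ ``bounds the sum quadratically in the Fourier error'' is unsubstantiated: writing $s=\nu(G\setminus\ker\chi)$ one only gets $\mathrm{Re}(1-\widehat{\nu}(\chi))\gtrsim_G s$, which again gives $s\lesssim|1-\widehat{\nu}(\chi)|$, a \emph{first}-order (not second-order) relation. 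The quadratic gain in the paper's proof comes from an entirely different mechanism: it sets $p=\nu(G\setminus H)$ and $q=\mu(G\setminus H)$, establishes the combinatorial amplification $q\ge(k_n-1)p(1-p)^{k_n-2}\ge\frac{3p}{2}$ by counting tuples with exactly one coordinate outside $H$, then invokes the two-point (data-processing) inequality $D_{\KL}(\nu\|\mu)\ge p\log\frac{p}{q}+(1-p)\log\frac{1-p}{1-q}$ and a second-order Taylor estimate of the right-hand side at $p=0$ to deduce $p\le C_n D_{\KL}(\nu\|\mu)$. Without some replacement for the amplification $q\gg p$ and the binary KL lower bound, part (2) does not follow from the Fourier estimates alone, and this is exactly the step where the factor $k_n-1$ in the number of coordinates buys the needed improvement.
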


\begin{proof}
We follow the proof of \cite[Lemma 4.1]{Mes23} with some modifications. 
For simplicity, we write $k=k_n$ in the proof. 
By Pinsker's inequality (\cite[Lemma 2.3]{Mes23}), we have $\dt := \sum_{x \in G}| \nu(x)-\mu(x)| \le 2 \sqrt{D_{\KL}(\nu || \mu)}$. For a character $\rho \in \hat{G} = \Hom(G, \bC^*)$, the Fourier transforms of $\nu$ and $\mu$ are given by $\hat{\nu}(\rho) = \sum_{a \in G} \rho(a) \nu(a)$ and $\hat{\mu}(\rho) = \sum_{a \in G} \rho(a) \mu(a)$. Then we have
$$
\hat{\mu}(\rho) 
= \sum_{a \in G} \rho(a) \frac{n(k-1)_a}{n^{k-1}}
= \sum_{a \in G} \sum_{\substack{b_1, \ldots, b_{k-1} \in G \\ b_1+\cdots+b_{k-1}=-a}} \prod_{i=1}^{k-1} \rho(b_i)^{-1} \frac{n_{b_i}}{n}
= \prod_{i=1}^{k-1} \left ( \sum_{b_i \in G} \rho(b_i)^{-1} \frac{n_{b_i}}{n} \right )
= \ol{(\hat{\nu}(\rho))^{k-1}}
$$
and $\left\| \hat{\nu}(\rho)-\hat{\mu}(\rho) \right\| \le \dt$ so $z = \hat{\nu}(\rho)$ satisfies the condition $|z - \ol{z^{k-1}}| \le \dt$.

Define $f(z)=z-\ol{z^{k-1}}$. The roots of $f(z)$ are $0$, $1$ and $e^{2\pi i\ell/k}$ for $1 \le \ell \le k-1$. Let $u(t)=t^{k-1}-t$. Then $u(0)=u(1)=0$ and $u'(t)=(k-1)t^{k-2}-1$ so $| u'(t) | \ge 1/2$ if $0 \le t < \left ( \frac{1}{2(k-1)} \right )^{\frac{1}{k-2}}$ or $t > \left ( \frac{3}{2(k-1)} \right )^{\frac{1}{k-2}}$. By the inequality $|f(z)| \ge |u(|z|)|$, 
$$
|f(z)| < \dt_0 := \frac{1}{2} \min \left ( \left ( \frac{1}{2(k-1)} \right )^{\frac{1}{k-2}}, 1 - \left ( \frac{3}{2(k-1)} \right )^{\frac{1}{k-2}} \right )
$$
implies that $|z| < 2 \dt_0$ or $||z|-1| < 2 \dt_0$. Note that 
$$
\dt_0 \le \frac{1}{4} \left ( \left ( \frac{1}{2(k-1)} \right )^{\frac{1}{k-2}} + 1 - \left ( \frac{3}{2(k-1)} \right )^{\frac{1}{k-2}} \right ) < \frac{1}{4}.
$$
We also have $\dt_0 > \frac{1}{3k}$ since $\dt_0 = \frac{1}{8}$ for $k=3$ and
$$
\dt_0 = \frac{1}{2} \left ( 1 - \left (\frac{3}{2(k-1)} \right )^{\frac{1}{k-2}} \right ) 
= \frac{1}{2} \frac{1 - \frac{3}{2(k-1)}}{\sum_{i=0}^{k-3} \left (\frac{3}{2(k-1)} \right )^{\frac{i}{k-2}}}
> \frac{1 - \frac{3}{2(k-1)}}{2(k-2)}
\ge \frac{1}{2k}
$$
for every $k \ge 4$.

Now we prove that if $|f(z)| < \dt_0$, then $|z-z_0| \le 4|f(z)|$ for some root $z_0$ of $f(z)$. If $|z| < 2 \dt_0 < \frac{1}{2}$, then $|f(z)| \ge |z|-|z|^{k-1} \ge \frac{|z-0|}{2}$. Now assume that $||z|-1| < 2 \dt_0 < \frac{1}{2}$. Let $z=re^{i \theta}$ (so $|z|=r$) and $\frac{k \theta}{\pi} = 2q+\ep$ for some $q \in \Z$ and $|\ep| \le 1$. Then
$$
|f(z)| = |re^{i \theta} - r^{k-1} e^{-i(k-1) \theta}| = r|e^{ik \theta} - r^{k-2}| = r|e^{i \pi \ep} - r^{k-2}| \ge r|r-1|.
$$
Since $r > 1 - 2 \dt_0 > \frac{1}{2}$ and $|f(z)| < \dt_0 < \frac{1}{4}$, we have $|\sin \pi \ep| \le |e^{i \pi \ep} - r^{k-2}| = \frac{|f(z)|}{r} < \frac{1}{2}$ so $|\ep| < \frac{1}{6}$. Thus
$$
|f(z)| = r|e^{i \pi \ep} - r^{k-2}| \ge r|\sin \pi \ep| \ge 3r|\ep| \ge |\ep|.
$$
For the root $z_0 = e^{\frac{2 \pi i q}{k}}$ of $f(z)$, we have 
$$
|z-z_0| \le |re^{i \theta}-e^{i \theta}| + |e^{i \theta}-e^{\frac{2 \pi i q}{k}}| 
\le |r-1| + \frac{\pi |\ep|}{k} 
\le \frac{|f(z)|}{r} + \frac{\pi}{k} |f(z)| 
\le 4|f(z)|.
$$

Assume that $G \neq \{ 1 \}$ and let $m \ge 2$ be the smallest positive integer such that $mG=0$ (so $\gcd(m,k)=1$). For $a \in G$ and $\rho \in \hat{G}$, $\rho(a) = e^{2 \pi i t/m}$ for some $t \in \Z$ so 
$$
\Re(\rho(a)e^{-2\pi i\ell/k}) 
= \cos\left( \frac{2 \pi(kt - m \ell)}{mk} \right)
\le \cos\left( \frac{2 \pi}{mk} \right)
$$ 
for each $1 \le \ell \le k-1$. This implies that for every $1 \le \ell \le k-1$ and $\rho \in \hat{G}$, we have
\begin{align*}
|\hat{\nu}(\rho) - e^{2\pi i\ell/k}|
& = \left | \sum_{a \in G} (\rho(a)e^{-2\pi i\ell/k}) \nu(a) - 1 \right | \\
& \ge 1 - \sum_{a \in G} \nu(a) \Re(\rho(a)e^{-2\pi i\ell/k}) \\
& \ge 1 - \cos\left( \frac{2 \pi}{mk} \right) \\
& > \frac{4 \pi^2}{3(mk)^2} =: N_k.    
\end{align*}
(The last inequality follows from the fact that $1 - \cos x > \frac{x^2}{3}$ for $|x| \le \frac{2\pi}{3}$ with $x\neq 0$.) 

Let $\dt_1 = \frac{1}{m^2|G|k^2}$ and $C_n = \frac{2}{\dt_1^2}$ (since $k = k_n$ depends on $n$, so does $C_n$). If $\sqrt{D_{\KL}(\nu || \mu)} \ge \frac{\dt_1}{\sqrt{2}}$, then $C_nD_{\KL}(\nu || \mu) \ge 1$ so the lemma is trivial. From now on, we assume that $\sqrt{D_{\KL}(\nu || \mu)} < \frac{\dt_1}{\sqrt{2}}$, which implies that 
$$
\dt \le \sqrt{2 D_{\KL}(\nu || \mu)} < \dt_1 < \dt_0.
$$
Then $z=\hat{\nu}(\rho)$ satisfies $|f(z)| \le \dt < \dt_0$ so $|z-z_0| \le 4 |f(z)| \le 4 \dt$ for some root $z_0$ of $f$. However, $|\hat{\nu}(\rho) - e^{2\pi i\ell/k}| > N_k > 4 \dt_1 > 4 \dt$ for each $1 \le \ell \le k-1$ so $z_0$ should be $0$ or $1$. 
Thus for every character $\rho \in \hat{G}$, we have
$$
|\hat{\nu}(\rho)| \le 4\dt \quad\text{or}\quad |\hat{\nu}(\rho)-1| \le 4\dt.
$$

Let $\hat{G}_1$ be the set of characters $\rho \in \hat{G}$ such that $|\hat{\nu}(\rho) - 1| \le 4 \dt$. For every $\rho \in \hat{G}_1$ and $a \in G \backslash \ker \rho$, we have $\Re (\rho(a)) \le \cos \frac{2 \pi}{m} \le 1 - \frac{8}{m^2}$ 
so $\Re(\hat{\nu}(\rho)) \le 1 - \frac{8}{m^2} \nu(G \backslash \ker \rho)$.
Therefore
$$
1 - 4 \dt \le \Re(\hat{\nu}(\rho))
\le 1 - \frac{8}{m^2} \nu(G \backslash \ker \rho)
$$
so $\nu(G \backslash \ker \rho) \le \frac{m^2 \dt}{2}$. 
For a subgroup $H = \bigcap_{\rho \in \hat{G}_1} \ker \rho$ of $G$, we deduce that 
$$
\nu(G \backslash H) \le \sum_{\rho \in \hat{G}_1} \nu(G \backslash \ker \rho) \le \frac{m^2 |G| \dt}{2}.
$$

Now we prove the two statements of the lemma. 
First, we claim that $\left | \hat{\nu}(\rho) - \hat{\nu_H}(\rho)\right | \le 4 \dt$ for all $\rho \in \hat{G}$.
If $\rho \in \hat{G}_1$, then $\hat{\nu_H}(\rho) = 1$ and
$$
|\hat{\nu}(\rho) - \hat{\nu_H}(\rho)| \le 4\dt.
$$
If $\rho \in \hat{G}\backslash \hat{G}_1$, $|\hat{\nu}(\rho)| \le 4 \delta$ by the definition of $\hat{G}_1$. If $H \subseteq \ker \rho$, then
$$
|\hat{\nu}(\rho)| \ge \nu(H) - \nu(G\backslash H) = 1-2\nu(G\backslash H) \ge 1 - m^2 |G| \dt 
> 1 - m^2 |G| \dt_1 = 1 - \frac{1}{k^2} > 4 \dt,
$$
which is a contradiction. Thus, we may choose $h \in H \backslash \ker \rho$. Then
$$
\hat{\nu_H}(\rho) = \frac{1}{|H|}\sum_{x \in H} \rho(x) = \frac{1}{|H|}\sum_{x \in H} \rho(h+x) = \rho(h)\hat{\nu_H}(\rho).
$$
Therefore, $\hat{\nu_H}(\rho) = 0$ and we again have
$$
|\hat{\nu}(\rho) - \hat{\nu_H}(\rho)| \le 4\dt. 
$$
Hence, the above claim is verified. 
This implies that
$$
\left | \nu(a) - \nu_H(a) \right |
= \left| \frac{1}{|G|}\sum_{\rho \in \hat{G}} \ol{\rho(a)}(\hat{\nu}(\rho) - \hat{\nu_H}(\rho)) \right|
\le 4 \dt
\le 4 \sqrt{2D_{\KL}(\nu || \mu)}
\le C_n \sqrt{D_{\KL}(\nu || \mu)}
$$
for every $a \in G$ so the first statement is true. 

Now we prove the second assertion. 
Note that
$$
\mu(a) = \sum_{\substack{b_1, \ldots, b_{k-1} \in G \\ b_1 + \cdots +b_{k-1} = -a}} \nu(b_1)\cdots \nu(b_{k-1}).
$$
Let $p = \nu(G \backslash H)$ and $q = \mu(G \backslash H)$. Letting $B_i = \{ (b_1, \ldots, b_{k-1} ) \in G^{k-1}: b_i \in G\backslash H~\text{and}~b_j \in H~\text{for all $j\neq i$}\}$, it is straightforward to see that
$$
q \ge \sum_{i=1}^{k-1}\sum_{(b_1, \ldots, b_{k-1} ) \in B_i}  \nu(b_1)\cdots \nu(b_{k-1}) = (k-1)p(1-p)^{k-2}.
$$
By the inequality
$$
p \le \frac{m^2 |G| \dt}{2} < \frac{m^2 |G| \dt_1}{2} = \frac{1}{2k^2} < 2 \dt_0 \le 1 - \lt ( \frac{3}{2(k-1)} \rt )^{\frac{1}{k-2}},
$$
it follows that
$$
q \ge (k-1)p(1-p)^{k-2} \ge (k-1)p \cdot \frac{3}{2(k-1)} = \frac{3p}{2}.
$$ 
By \cite[Lemma 2.2]{Mes23}, we have
$$
D_{\KL}(\nu || \mu) \ge f(p,q) := p \log \frac{p}{q} + (1-p) \log \frac{1-p}{1-q}.
$$
The function $f(p,q)$ is monotone increasing on $[p,1]$ (as a function of $q$) so $f(p,q) \ge f(p, \frac{3p}{2}) =: m(p)$. We have $m(0)=0$, $m'(0) = 0.5 - \log 1.5 > C_n^{-1}$ and $m''(p) = \frac{1}{(2-3p)^2(1-p)} \ge 0$ for every $0 \le p \le 1/2$ so $m(p) > C_n^{-1}p$. Finally, we conclude that $p = \nu(G\backslash H) \le C_n m(p) \le C_n D_{\KL}(\nu || \mu)$. 
\end{proof}

For a positive integer $n$, let
$$
D_n = \left\{ \unl{n}= (n_a)_{a \in G} \in \Z_{\ge 0}^{G} : \sum_{a \in G} n_a=n \text{ and the elements $a \in G$ with $n_a >0$ generate } G \right\}
$$
and
$$
D_n' = \{\unl{n} \in D_n : n(k_n-1)_a > 0 ~\text{for all}~ a \in G_+(\unl{n})\}.
$$
By \eqref{eq: moment is sum of probability} and Lemma \ref{lem: E equal to 0 case}, it follows that
\begin{equation}
\label{eq: moment Dn' sum}
\bE(\#\Sur(\cok(A_n), G)) = \sum_{\unl{n} \in D_n} E(\unl{n}) =  \sum_{\unl{n} \in D_n'} E(\unl{n}).        
\end{equation}

For $C_n = 2m^4|G|^2k_n^4 = O(k_n^4)$ given in the proof of Lemma \ref{lem: Mes lemma 10}, let
$$
t_n = (k_n-1)C_n \sqrt{|G|n\log n} \quad \text{and} \quad r_n= (k_n-1)^2C_n|G|\log n.
$$

Define
\begin{equation}
\label{eq: nearly-unifrom ball}
B(n,H) := \left\{ \unl{n} \in D_n' : \\| \nu_{\unl{n}}(a) - \nu_H(a) | \le \frac{t_n}{n} \text{ for every $a \in G$ } ~\text{and}~  \sum_{a \notin H} \nu_{\unl{n}}(a) \le \frac{r_n}{n} \right\},    
\end{equation}
which we call \emph{$H$-nearly-uniform ball}. 
\begin{lem}
\label{lem: disjoint balls}
Suppose that $k_n = O(n^{1/10 - \ep})$ for some $\ep>0$. Let $H_1$ and $H_2$ be distinct subgroups of $G$. Then for all sufficiently large $n$,
$$
B(n,H_1) \cap B(n, H_2) = \emptyset. 
$$
\end{lem}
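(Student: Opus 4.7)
The plan is to use only the first defining inequality of the nearly-uniform ball, namely $|\nu_{\unl n}(a) - \nu_H(a)| \le t_n/n$ for every $a \in G$, combined with the fact that $t_n/n \to 0$ under the growth hypothesis on $k_n$. The second condition involving $r_n$ will not enter the argument.

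Since $H_1 \ne H_2$, the symmetric difference $H_1 \triangle H_2$ is nonempty; after swapping $H_1$ and $H_2$ if necessary, I pick some $a \in H_1 \setminus H_2$. Suppose for contradiction that $\unl n \in B(n, H_1) \cap B(n, H_2)$. Because $\nu_{H_1}(a) = 1/|H_1|$ while $\nu_{H_2}(a) = 0$, applying the first inequality in each ball yields
$$
\frac{1}{|H_1|} - \frac{t_n}{n} \;\le\; \nu_{\unl n}(a) \;\le\; \frac{t_n}{n},
$$
so $1/|G| \le 1/|H_1| \le 2 t_n/n$.

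Next I verify that the right-hand side tends to $0$. From Lemma \ref{lem: Mes lemma 10} one has $C_n = O_G(k_n^4)$, and therefore
$$
t_n = (k_n-1) C_n \sqrt{|G| n \log n} = O_G\bigl(k_n^5 \sqrt{n \log n}\bigr).
$$
Under the assumption $k_n = O(n^{1/10 - \ep})$ we obtain
$$
\frac{t_n}{n} = O\!\left(n^{1/2 - 5\ep} \sqrt{\frac{\log n}{n}}\right) = O\!\left(n^{-5\ep} \sqrt{\log n}\right) \longrightarrow 0.
$$
Hence $2 t_n/n < 1/|G|$ for all sufficiently large $n$, contradicting the chain of inequalities above and proving that $B(n, H_1) \cap B(n, H_2) = \emptyset$.

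There is essentially no obstacle: the argument reduces to one linear combination of the two pointwise bounds on $\nu_{\unl n}(a)$ followed by a routine power-counting comparison. The only subtlety is bookkeeping the dependence $C_n = O_G(k_n^4)$ inherited from the proof of Lemma \ref{lem: Mes lemma 10}; once that is in hand, the exponent $1/10 - \ep$ in the hypothesis comfortably beats the required $k_n^5 = o(\sqrt{n / \log n})$.
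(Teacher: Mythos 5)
Your proof is correct and follows essentially the same approach as the paper: fix $a \in H_1 \setminus H_2$ and note that membership in both balls forces $\nu_{\unl{n}}(a)$ to be close to $1/|H_1|$ and to $0$ simultaneously, impossible for large $n$. The only (immaterial) difference is that you derive the upper bound $\nu_{\unl{n}}(a) \le t_n/n$ from the first defining inequality of $B(n,H_2)$, whereas the paper uses the second (the $r_n$-condition, giving $n_a \le r_n$); since $r_n = O(k_n^6\log n)$ is dominated by $t_n$, the two estimates are interchangeable here.
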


\begin{proof}
We may assume that $|H_1| \ge |H_2|$, so there exists $g \in H_1 \backslash H_2$. Let $\unl{n} = (n_a)_{a \in G} \in D_n'$ and suppose that $\unl{n} \in B(n, H_1) \cap B(n,H_2)$. Then we have
$$
\left|n_g- \frac{n}{|H_1|}\right| \le t_n
\quad\text{and}\quad
|n_g| \le r_n.
$$
Now we have
$$
\frac{n}{|H_1|} \le t_n+r_n = O(k_n^5 \sqrt{n \log n} ) = O(n^{1 - 5 \ep} \sqrt{\log n}),
$$
which is true only for finitely many $n$. 
\end{proof}

\begin{prop}
\label{prop: moments sum splits}
Suppose that $\gcd(|G|, k_n) = 1$ for all sufficiently large $n$. Suppose that $k_n = O(n^{1/10 - \ep})$ for some $\ep >0$. Then we have
$$
\lim_{n \to \infty} \left ( \bE(\#\Sur(\cok(A_n), G))- \sum_{H \in \text{Sub}(G)} \sum_{\unl{n} \in B(n, H)} E(\unl{n}) \right ) = 0,
$$
where $\text{Sub}(G)$ denotes the set of all subgroups of $G$.
\end{prop}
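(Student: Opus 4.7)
The plan is to leverage \eqref{eq: moment Dn' sum} together with Lemmas \ref{lem: Mes lemma 10} and \ref{lem: disjoint balls}. By Lemma \ref{lem: disjoint balls}, for sufficiently large $n$ the balls $B(n,H)$ for distinct $H \in \text{Sub}(G)$ are pairwise disjoint, so the proposition reduces to showing
$$
\lim_{n \to \infty} \sum_{\unl{n} \in D_n' \setminus \bigcup_{H} B(n,H)} E(\unl{n}) = 0.
$$

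For each $\unl{n}$ in the complement, I would apply Lemma \ref{lem: Mes lemma 10} to produce a subgroup $H = H(\unl{n})$ of $G$ satisfying its two estimates for $\nu = \nu_{\unl{n}}$ and $\mu = \mu_{\unl{n}}$. Since $\unl{n} \notin B(n, H)$, at least one of the two defining conditions in \eqref{eq: nearly-unifrom ball} must fail. Combining the failure of the first with statement (1) of Lemma \ref{lem: Mes lemma 10} gives $C_n \sqrt{D_{\KL}(\nu || \mu)} > t_n/n$, while combining the failure of the second with statement (2) gives $C_n D_{\KL}(\nu || \mu) > r_n/n$. The choices $t_n = (k_n - 1) C_n \sqrt{|G| n \log n}$ and $r_n = (k_n - 1)^2 C_n |G| \log n$ are calibrated so that both alternatives imply the uniform lower bound
$$
n D_{\KL}(\nu_{\unl{n}} || \mu_{\unl{n}}) > (k_n - 1)^2 |G| \log n.
$$

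Substituting this into \eqref{eq: E inequality} yields
$$
E(\unl{n}) < k_n^{|G|} n^{(k_n-1)|G|} \cdot n^{-(k_n-1)^2 |G|} = k_n^{|G|} n^{-(k_n-1)(k_n-2)|G|} \le k_n^{|G|} n^{-2|G|}
$$
since $k_n \ge 3$. Because $|D_n'| \le \binom{n+|G|-1}{|G|-1} = O(n^{|G|-1})$, the total contribution of the complement is $O(k_n^{|G|} n^{-|G|-1})$, which tends to $0$ under the hypothesis $k_n = O(n^{1/10 - \ep})$. Given that Lemma \ref{lem: Mes lemma 10} is already in hand, no serious obstacle arises; the only delicate point is that the parameters $t_n$ and $r_n$ are chosen just large enough for the resulting bound on $n D_{\KL}$ to beat the polynomial prefactor $n^{(k_n-1)|G|}$ in \eqref{eq: E inequality} with room to spare.
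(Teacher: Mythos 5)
Your proposal is correct and follows essentially the same route as the paper's proof: use Lemma \ref{lem: disjoint balls} for disjointness, reduce via \eqref{eq: moment Dn' sum} to bounding the sum over $D_n' \setminus \bigcup_H B(n,H)$, invoke Lemma \ref{lem: Mes lemma 10} in contrapositive form to get $n\,D_{\KL}(\nu_{\unl{n}}\|\mu_{\unl{n}}) > (k_n-1)^2|G|\log n$, and plug this into \eqref{eq: E inequality} together with a crude bound on $|D_n'|$. The only cosmetic difference is that you further relax the bound to $k_n^{|G|}n^{-2|G|}$ using $k_n \ge 3$ before summing, whereas the paper keeps the exponent $-(k_n-1)(k_n-2)|G|$ and uses $|D_n| \le (n+1)^{|G|}$; both give the required convergence to $0$.
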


\begin{proof}
Assume that $n$ is sufficiently large so that $\gcd(|G|, k_n) = 1$ and the sets $B(n,H)$ ($H \in \text{Sub}(G)$) are pairwise disjoint (by Lemma \ref{lem: disjoint balls}).
By Lemma \ref{lem: Mes lemma 10}, we see that if $D_{\KL}(\nu_{\unl{n}}||\mu_{\unl{n}}) \le \frac{(k_n-1)^2|G|\log n}{n}$ for some $\unl{n} \in D_n'$, then $\unl{n} \in B(n,H)$ for some subgroup $H$ of $G$. Define
$$
D_n'' = D_n' \backslash \bigcup_{H \in \text{Sub}(G)} B(n,H).
$$
By \eqref{eq: moment Dn' sum}, we have
$$
\sum_{\unl{n} \in D_n''} E(\unl{n}) = \bE(\#\Sur(\cok(A_n), G))- \sum_{H \in \text{Sub}(G)} \sum_{\unl{n} \in B(n, H)} E(\unl{n}).
$$
If $\unl{n} \in D_n''$, then \eqref{eq: E inequality} yields
$$
E(\unl{n}) \le k_n^{|G|} n^{(k_n-1)|G|}\exp(-(k_n-1)^2|G|\log n) = k_n^{|G|}n^{-(k_n-1)(k_n-2)|G|}. 
$$
By the inequality $|D_n''| \le |D_n| \le (n+1)^{|G|}$, it follows that
$$
0 \le \sum_{\unl{n} \in D_n''} E(\unl{n}) \leq \left ( \frac{k_n(n+1)}{n^{(k_n-1)(k_n-2)}} \right )^{|G|}
$$
and the right-hand side converges to $0$ as $n \to \infty$.
\end{proof}

For later use, we record here the size of $B(n,H)$. 

\begin{lem}
\label{lem: size of ball for H}    
For every subgroup $H$ of $G$, 
$$
|B(n,H)| = O\left(k_n^{6|G|} \sqrt{n}^{|H|-1}(\log n)^{|G|}\right).
$$
\end{lem}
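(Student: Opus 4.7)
The plan is to bound $|B(n,H)|$ by directly counting integer tuples $\unl{n} = (n_a)_{a \in G}$ satisfying the defining constraints. Recalling that $\nu_{\unl{n}}(a) = n_a/n$ and that $\nu_H(a)$ equals $1/|H|$ on $H$ and $0$ off $H$, the definition of $B(n,H)$ unpacks into the following integer conditions: $\bigl| n_a - n/|H| \bigr| \le t_n$ for $a \in H$; $n_a \le t_n$ for $a \notin H$; $\sum_{a \notin H} n_a \le r_n$; and the global constraint $\sum_{a \in G} n_a = n$.

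First I would count the choices for the off-$H$ coordinates $(n_a)_{a \notin H}$. These are nonnegative integers with total sum at most $r_n$, and the standard stars-and-bars bound gives at most $\binom{r_n + |G|-|H|}{|G|-|H|} = O(r_n^{|G|-|H|})$ such tuples. Then, conditional on this choice, the in-$H$ coordinates $(n_a)_{a \in H}$ have their total sum determined by the global constraint, so there are $|H|-1$ free coordinates, each lying in an interval of length at most $2t_n+1$. This yields at most $O(t_n^{|H|-1})$ tuples.

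Finally, I would substitute $t_n = O\bigl(k_n^5 \sqrt{n \log n}\bigr)$ and $r_n = O\bigl(k_n^6 \log n\bigr)$, which follow from $C_n = O(k_n^4)$. Multiplying the two counts gives
\[
|B(n,H)| = O\bigl(k_n^{5(|H|-1)+6(|G|-|H|)} \cdot n^{(|H|-1)/2} \cdot (\log n)^{(|H|-1)/2 + (|G|-|H|)}\bigr),
\]
and since $|H| \ge 1$ one checks that the $k_n$-exponent is bounded by $6|G|$ and the $\log n$-exponent by $|G|$, which gives the claimed estimate. The one subtle point is that in the first count one must exploit the global $r_n$-constraint rather than just $n_a \le t_n$ for each $a \notin H$ individually; bounding each off-$H$ coordinate by $t_n$ would introduce a spurious $n^{(|G|-|H|)/2}$ factor and fail to produce the sharp $\sqrt{n}^{|H|-1}$. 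Beyond this, no real obstacle arises — the argument is essentially a coarse volume estimate.
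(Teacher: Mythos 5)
Your proof is correct and takes essentially the same route as the paper: count the in-$H$ coordinates by the $t_n$-interval constraint (with one determined by the sum) and the off-$H$ coordinates via the $r_n$-constraint, then substitute $t_n=O(k_n^5\sqrt{n\log n})$ and $r_n=O(k_n^6\log n)$. The only cosmetic difference is that the paper obtains $O(r_n^{|G|-|H|})$ for the off-$H$ coordinates by the simpler observation that nonnegativity plus $\sum_{a\notin H}n_a\le r_n$ already forces $n_a\le r_n$ for each such $a$ individually, so the stars-and-bars argument, while valid, is not needed.
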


\begin{proof}
Let $\unl{n} = (n_a)_{a\in G} \in B(n,H)$. Then we have $|n_a-\frac{n}{|H|}| \leq t_n$ for each $a \in H\backslash \{0\}$, $n_a \leq r_n$ for each $a \in G \backslash H$ and $n_0 = n - \sum_{a \in G \backslash \{ 0 \} } n_a$. 
These imply that
$$
|B(n,H)| = O\left(t_n^{|H|-1}r_n^{|G|-|H|}\right). 
$$
Now the assertion follows from $C_n = O(k_n^4)$ and the definitions of $r_n$ and $t_n$. 
\end{proof}

By Proposition \ref{prop: moments sum splits}, to show Theorem \ref{thm: moment theorem 1_intro}, it is enough to prove that
$$
\lim_{n \to \infty} \sum_{\unl{n} \in B(n, G)} E(\unl{n}) = 1
$$
and 
$$
\lim_{n \to \infty} \sum_{\unl{n} \in B(n, H)} E(\unl{n}) = 0
$$
for every proper subgroup $H$ of $G$. 
Section \ref{Sec_main term} to \ref{Sec_B2(n,H)} will be devoted to the proof of these equalities.

\section{Computing the moments: sum over \texorpdfstring{$B(n,G)$}{B(n,G)}} \label{Sec_main term}
In this section, we prove that if $k_n$ does not grow too rapidly, then 
\begin{equation*}
\lim_{n \to \infty} \sum_{\unl{n} \in B(n,G)} E(\unl{n}) = 1. 
\end{equation*}
More precisely, we prove the following. 
\begin{prop}
\label{prop: main moment}
Suppose that
\begin{equation}
\label{eq: assumption for computing main moment}
k_n = O\left(n^{\frac{1}{30} - \ep}\right) \text{ for some $\ep>0$}.
\end{equation}
Then
\begin{equation}
\label{eq: main moment}
\lim_{n \to \infty} \sum_{\unl{n} \in B(n,G)} E(\unl{n}) = 1. 
\end{equation}
\end{prop}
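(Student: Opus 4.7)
The plan is to perform a Laplace-type expansion of $E(\unl{n})$ around the uniform point and recognize the resulting sum as a Riemann approximation to a standard Gaussian integral. I parametrize $\unl{n} \in B(n,G)$ by integers $m_a := n_a - n/|G|$, $a \in G$, satisfying $\sum_a m_a = 0$ and $|m_a| \le t_n$, and analyze the three factors of
\begin{equation*}
E(\unl{n}) = \al(\unl{n}) \cdot \frac{\det(M)}{k_n \prod_{a \in G} n(k_n-1)_a} \cdot \exp(-n D_{\KL}(\nu_{\unl{n}} \| \mu_{\unl{n}}))
\end{equation*}
separately. At the exactly uniform point, $\nu = \mu = \nu_G$, and the matrix $M$ decomposes as $(n^{k_n-1}/|G|)\, I + (k_n-1)(n^{k_n-1}/|G|^2)\, J$ (with $J$ the all-ones matrix), yielding eigenvalues $k_n n^{k_n-1}/|G|$ (once) and $n^{k_n-1}/|G|$ (with multiplicity $|G|-1$). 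This gives $\det(M) = k_n (n^{k_n-1}/|G|)^{|G|}$ and $\prod_a n(k_n-1)_a = (n^{k_n-1}/|G|)^{|G|}$, so the determinantal ratio equals exactly $1$; Stirling's formula then yields $\al(\unl{n}) = |G|^{|G|/2}/(2\pi n)^{(|G|-1)/2} \cdot (1+o(1))$ uniformly on $B(n,G)$.

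For the perturbative expansion, write $\delta_a = m_a/n$ so that $\nu = \nu_G + \delta$ with $\sum_a \delta_a = 0$. The key algebraic observation is that expanding $\mu = \nu^{*(k_n-1)}$ using $\prod_i (1/|G| + \delta_{b_i})$ and summing under $\sum_i b_i = -a$ kills every mixed term by $\sum_b \delta_b = 0$, leaving the clean identity $\mu(a) = 1/|G| + (\delta^{*(k_n-1)})(-a)$. Since $\|\delta\|_\infty \le t_n/n$ is small, $\nu - \mu = \delta + O(\|\delta\|_\infty^{k_n-1})$, and the quadratic Taylor expansion of KL divergence gives
\begin{equation*}
n D_{\KL}(\nu_{\unl{n}} \| \mu_{\unl{n}}) = \frac{|G|}{2n} \sum_a m_a^2 + O(n \|\delta\|_\infty^3)
\end{equation*}
uniformly on $B(n,G)$. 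A parallel Taylor expansion for the determinantal ratio, based on the decomposition $M = D_\nu K D_\nu + \mathrm{diag}(n(k_n-1)_a)$ with $D_\nu = \mathrm{diag}(\sqrt{\nu(a)})$ and spectral perturbation against the smallest eigenvalue $n^{k_n-1}/|G|$ of $M_0$, gives the ratio equal to $1 + o(1)$ uniformly. Since $t_n = O(k_n^5 \sqrt{n \log n})$, the cubic KL remainder is of order $k_n^{15}(\log n)^{3/2}/\sqrt{n}$, which vanishes exactly under the hypothesis $k_n = O(n^{1/30-\ep})$.

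Combining the three expansions yields $E(\unl{n}) = \frac{|G|^{|G|/2}}{(2\pi n)^{(|G|-1)/2}} \exp\bigl(-\frac{|G|}{2n}\sum_a m_a^2\bigr)(1+o(1))$. The Gaussian has standard deviation of order $\sqrt{n/|G|}$, which vastly exceeds the lattice spacing $1$ but is much smaller than the cutoff $t_n$, so the sum over $\{(m_a) \in \Z^G : \sum_a m_a = 0\}$ is accurately approximated by the Gaussian integral on the hyperplane. That lattice has covolume $\sqrt{|G|}$ in the hyperplane, and the $(|G|-1)$-dimensional Gaussian integral evaluates to $(2\pi n/|G|)^{(|G|-1)/2}$, so
\begin{equation*}
\sum_{\unl{n} \in B(n,G)} E(\unl{n}) = \frac{|G|^{|G|/2}}{(2\pi n)^{(|G|-1)/2}} \cdot \frac{1}{\sqrt{|G|}} \cdot (2\pi n/|G|)^{(|G|-1)/2} \cdot (1+o(1)) = 1 + o(1),
\end{equation*}
which is \eqref{eq: main moment}. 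The main obstacle will be the uniform control of the determinantal ratio, since $\det(M)$ is a polynomial in the $\delta_a$ whose degree grows with $k_n$; showing that the $O(\|\delta\|)$ entrywise perturbations accumulate to only an $o(1)$ correction requires delicate spectral bounds that exploit the spectral gap in $M_0$, and it is essentially this step that dictates the $1/30$-exponent in the hypothesis on $k_n$.
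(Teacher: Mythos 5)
Your overall strategy is the same Laplace-method argument the paper uses: Stirling for $\alpha(\underline n)$, a quadratic Taylor expansion of the KL term, the determinantal ratio $\to 1$, and a Riemann/Gaussian approximation. The routes diverge in the KL step, and there your version is genuinely cleaner. The paper derives the Hessian $Q = |G|(\mathfrak E + \mathfrak I)$ on the $(|G|-1)$-dimensional chart $G\setminus\{0\}$ by a lengthy direct computation of first and second partials (their version of [Mes23, Lemma 5.5]). You instead observe that on the constrained hyperplane $\sum_a \delta_a = 0$ the convolution identity $\mu = \nu_G + \delta^{*(k_n-1)}$ holds exactly, so $\mu - \nu_G$ is of order $\|\delta\|_\infty^{k_n-1}$ and the quadratic form reduces to $\frac{|G|}{2}\sum_a \delta_a^2$ (which one checks is identical to $\frac12 y^TQy$ after eliminating $\delta_0$). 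This identity makes it transparent that the cubic Taylor remainder has a $k_n$-independent constant, which the paper assumes without comment when it writes $O(t_n^3/n^2)$; your derivation actually supplies the justification. The hyperplane-lattice bookkeeping with covolume $\sqrt{|G|}$ is equivalent to the paper's determinant $\det Q = |G|^{|G|}$ and gives the same final cancellation to $1$.

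Two corrections are needed. First, your concluding claim that the determinantal ratio is the step that ``dictates the $1/30$-exponent'' is wrong, and it even contradicts the sentence just before it: you correctly compute the cubic KL remainder to be $O\bigl(k_n^{15}(\log n)^{3/2}/\sqrt n\bigr)$, which needs $k_n = O(n^{1/30-\varepsilon})$, whereas the determinantal ratio is the polynomially milder constraint. In the paper's treatment (Lemma \ref{lem: lemma for section 2} together with Lemma \ref{lem: lem for 4.1}), each entry of $M$ differs from $M_{\mathrm{uni}}$ by $O(t_n n^{k_n-2}k_n^2)$, and since the determinant is a fixed-degree polynomial in the $|G|^2$ entries (the degree is $|G|$, not $k_n$), the relative error is $O(t_n k_n^2/n)=O(k_n^7\sqrt{\log n}/\sqrt n)$, requiring only $k_n = O(n^{1/14-\varepsilon})$. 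Your worry that ``$\det(M)$ is a polynomial in the $\delta_a$ whose degree grows with $k_n$'' is therefore a red herring: only the entries have degree $k_n-1$; the determinant in terms of entries has bounded degree. Second, you do not actually carry out the spectral-perturbation bound you promise for $M = M_{\mathrm{uni}} + E$; the paper shows this can be dispatched by the elementary entrywise bound above, so no ``delicate spectral bounds'' are needed, and your argument should either cite that or do the short $\det(I+M_{\mathrm{uni}}^{-1}E)$ computation explicitly. Finally, the passage from the lattice sum to the Gaussian integral should be justified (the paper uses pointwise convergence plus dominated convergence with the uniform exponential envelope), not just asserted as a Riemann approximation.
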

We assume \eqref{eq: assumption for computing main moment} throughout this section. 
Let us briefly explain Mészáros' idea (Section 5.2 of \cite{Mes23}) to prove \eqref{eq: main moment}. Let 
$G$ be an arbitrary finite abelian group. Let $\mathfrak{E}$ be the square matrix of size $|G|-1$ with all its entries given by $1$, and let $\mathfrak{I}$ be the identity matrix of size $|G|-1$. Define 
$$
Q = |G|(\mathfrak{E} + \mathfrak{I}).
$$ 
For $\unl{n} \in B(n,G)$, let $P(\unl{n})$ denote the projection of $\unl{n}$ to the $(|G|-1)$-tuple indexed by $G\backslash \{0\}$. Mészáros found an expression for $E(\unl{n})$ as follows: 
\begin{equation}
\label{eq: expression for summand of main moment}
E(\unl{n}) = (1+o(1))\frac{\sqrt{|G|}^{|G|}}{\sqrt{2\pi n}^{|G|-1}} \exp\left(-\frac{1}{2}y^TQy\right), 
\end{equation}
where $y = \frac{P(\unl{n})- \frac{n}{|G|} \cdot \mathbb{1}}{\sqrt{n}}$.\footnote{There is a typo in the definition of $y$ in \cite{Mes23}, where $\mathbb{1}$ should be replaced by $n\cdot \mathbb{1}$; a similar correction applies to the definition of $K_n$.}
Define
$$
K_n = \left\{ \frac{P(\unl{n})- \frac{n}{|G|} \cdot \mathbb{1}}{\sqrt{n}}: \unl{n} \in B(n,G)\right\}.
$$
Then, we have
$$
\sum_{\unl{n} \in B(n,G)} E(\unl{n}) = (1+o(1))\sum_{y \in K_n} \frac{\sqrt{|G|}^{|G|}}{\sqrt{2\pi n}^{|G|-1}} \exp\left(-\frac{1}{2}y^TQy\right). 
$$
Furthermore, define $f_n : \mathbb{R}^{G \backslash \{0\}} \to \mathbb{R}$ as
$$
f_n(x) = \begin{cases}
\frac{\sqrt{|G|}^{|G|}}{\sqrt{2\pi}^{|G|-1}}\exp\left(-\frac{1}{2}y^TQy\right) \quad &\text{if $x \in y+[0, \frac{1}{\sqrt{n}})^{G\backslash \{0\}}$ for some $y \in K_n$}, \\
0 &\text{otherwise}. 
\end{cases}
$$
Mészáros observed that 
\begin{equation}
\label{eq: pointwise convergence}
f_n(x) \longrightarrow f_\infty(x):=\frac{\sqrt{|G|}^{|G|}}{\sqrt{2\pi }^{|G|-1}} \exp\left(-\frac{1}{2}x^TQx\right) ~\text{ for all $x \in \mathbb{R}^{G \backslash \{0\}}$ (pointwise convergence)}. 
\end{equation}
Then by applying the dominated convergence theorem and the Gaussian integral formula, Mészáros finally proved \eqref{eq: main moment} when $k_n = 3$ for all positive integers $n$ (see Section 5.2 of \cite{Mes23} for details). In our case, $k_n$ changes as $n$ varies, so we have to check whether \eqref{eq: expression for summand of main moment} and \eqref{eq: pointwise convergence} still hold in our setting, and this is what we will do in the rest of this section. As before, we closely follow Mészáros' argument.

Given $(\nu(a))_{a \in G\backslash \{0\}}$, let
$$
\nu(0) = 1 - \sum_{a \in G\backslash \{0\}} \nu(a) \quad\text{and}\quad \mu(a) = \sum_{\substack{b_1, \ldots, b_{k_n-1} \in G \\ b_1 + \cdots + b_{k_n-1} = -a}} \nu(b_1)\nu(b_2)\cdots\nu(b_{k_n-1}). 
$$
Let
$$\mathbb{R}':= \left\{(h_a)_{a \in G \backslash \{0\} }: 0\le h_a < 1~\text{and}~ \sum_{a \in G \backslash \{0\}} h_a \le 1 \right\}.$$
Define a function 
$$
f: \mathbb{R}' \to \mathbb{R}
$$ 
by sending $(\nu(a))_{a \in G\backslash\{0\}}$ to $D_\KL(\nu||\mu)$.

\begin{lem} (A generalization of \cite[Lemma 5.5]{Mes23})
The following statements hold. 
\begin{enumerate}
\item
We have $f\left(\frac{\mathbb{1}}{|G|} \right) = 0$.
\item 
The gradient of $f$ at $\frac{\mathbb{1}}{|G|}$ is $0$. 
\item 
The Hessian matrix of $f$ at $\frac{\mathbb{1}}{|G|}$ is $Q$. 
\item 
$Q$ is positive definite and $\det(Q) = |G|^{|G|}$.
\end{enumerate}

\end{lem}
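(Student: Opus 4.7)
\medskip

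\noindent\textbf{Proof plan.} The four assertions concern local data of $f$ at the uniform point $\nu_0 := \mathbb{1}/|G|$, together with linear algebra of $Q$. My plan is to reduce each part to an elementary computation, after observing that the ``$\mu$-dependence'' of $f$ contributes nothing up to second order.

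For (1), note that when $\nu(a) = 1/|G|$ for all $a \in G$, each summand $\prod_{i=1}^{k_n-1}\nu(b_i)$ equals $|G|^{-(k_n-1)}$, and the number of tuples $(b_1,\ldots,b_{k_n-1})\in G^{k_n-1}$ with $b_1+\cdots+b_{k_n-1}=-a$ is $|G|^{k_n-2}$. Hence $\mu(a) = 1/|G|$, so $\nu = \mu$ and $D_{\KL}(\nu\|\mu)=0$.

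For (2) and (3), write $f = g_1 - g_2$ with $g_1 = \sum_{a} \nu(a)\log\nu(a)$ and $g_2 = \sum_{a}\nu(a)\log\mu(a)$, treating the $\nu(c)$ for $c \in G\setminus\{0\}$ as the independent variables and using $\nu(0)=1-\sum_{c\neq 0}\nu(c)$ (so $\partial\nu(b)/\partial\nu(c) = \delta_{bc}-\delta_{b0}$). The key observation is that applying the product rule to $\mu(a)=\sum_{b_1+\cdots+b_{k_n-1}=-a}\prod_i\nu(b_i)$ and exploiting symmetry in the $b_i$ gives
$$
\frac{\partial\mu(a)}{\partial\nu(c)}
= (k_n-1)\bigl[\mu^{(k_n-2)}(a+c)-\mu^{(k_n-2)}(a)\bigr],
$$
where $\mu^{(\ell)}$ denotes the analogous $\ell$-fold convolution. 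At $\nu_0$ every $\mu^{(\ell)}$ is the uniform distribution, so $\partial\mu(a)/\partial\nu(c)\big|_{\nu_0}=0$. Consequently, at $\nu_0$ only the ``explicit'' derivatives of $\log\nu$ and $\log\mu$ survive, and $\partial f/\partial\nu(c)|_{\nu_0} = (\log\nu(c)-\log\nu(0)) - (\log\mu(c)-\log\mu(0)) = 0$, proving (2). For the Hessian, a direct computation gives
$$
\frac{\partial^2 g_1}{\partial\nu(c)\,\partial\nu(d)}\bigg|_{\nu_0} = |G|\,\delta_{cd} + |G| = \bigl(|G|(\mathfrak{I}+\mathfrak{E})\bigr)_{cd} = Q_{cd}.
$$
For $g_2$, differentiating the expression for $\partial g_2/\partial\nu(c)$ once more and setting $\nu=\nu_0$, every surviving term is proportional either to a first derivative $\partial\mu(\cdot)/\partial\nu(\cdot)$ (which vanishes at $\nu_0$) or to the sum $\sum_a \partial^2\mu(a)/\partial\nu(c)\partial\nu(d)$; the latter is $0$ because $\sum_{a\in G}\mu(a) = \bigl(\sum_{b\in G}\nu(b)\bigr)^{k_n-1}\equiv 1$ in a neighborhood of $\nu_0$. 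Hence the Hessian of $g_2$ vanishes at $\nu_0$, and the Hessian of $f$ at $\nu_0$ is exactly $Q$.

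For (4), the all-ones matrix $\mathfrak{E}$ of size $|G|-1$ has rank one with nonzero eigenvalue $|G|-1$, so $\mathfrak{I}+\mathfrak{E}$ has eigenvalue $|G|$ (once) and $1$ (with multiplicity $|G|-2$). Thus $Q = |G|(\mathfrak{I}+\mathfrak{E})$ has strictly positive eigenvalues $|G|^2$ and $|G|$ (the latter with multiplicity $|G|-2$), giving $\det(Q) = |G|^2\cdot|G|^{|G|-2} = |G|^{|G|}$ and positive definiteness.

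The only step that is not mechanical is the Hessian calculation for $g_2$: one must verify that both the ``first-order$\times$first-order'' cross terms and the ``linear-in-$\nu$ times second derivative of $\mu$'' term cancel at $\nu_0$. The cancellation of the former follows from $\partial\mu(a)/\partial\nu(c)|_{\nu_0}=0$, and the cancellation of the latter follows from the normalization identity $\sum_a\mu(a)\equiv 1$. These are the crux of the computation; everything else is bookkeeping.
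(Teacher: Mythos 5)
Your proof is correct and is genuinely cleaner than the paper's. The paper's proof of parts (2) and (3) brute-forces the full first and second partial derivatives of $f$ — leading to the sprawling displays for $\partial_a\partial_a f$ and $\partial_b\partial_a f$ — and then simply evaluates each large expression at $\mathbb{1}/|G|$. You instead isolate the two structural facts that make all the $\mu$-terms collapse: (i) $\partial\mu(a)/\partial\nu(c)$ is, up to a factor $k_n-1$, the difference of two $(k_n-2)$-fold convolutions which are both uniform at $\nu_0=\mathbb{1}/|G|$, hence $\partial\mu(a)/\partial\nu(c)|_{\nu_0}=0$; and (ii) $\sum_{a\in G}\mu(a)=\bigl(\sum_b\nu(b)\bigr)^{k_n-1}\equiv 1$ identically in a neighborhood of $\nu_0$, which kills the only surviving second-derivative-of-$\mu$ contribution $\sum_a\partial_d\partial_c\mu(a)$. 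With these in hand the Hessian of $g_2$ vanishes at $\nu_0$ and the Hessian of $f$ reduces to that of the entropy term $g_1$, giving $Q=|G|(\mathfrak{I}+\mathfrak{E})$ immediately. The paper gets the same answer but with no conceptual explanation of why the many $\mu$-dependent terms cancel; your route buys a shorter, more transparent verification (especially since $k_n$ varies), at the cost of having to formulate and justify the two structural identities. Part (4) is handled identically in both. No gaps.
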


\begin{proof}
For convenience, we write $k= k_n$.
Recall that 
$$
D_\KL(\nu||\mu) = \sum_{x \in G} \nu(x)\log\left(\frac{\nu(x)}{\mu(x)}\right).
$$
If $\nu(x) = 1/|G|$ for all $x \in G\backslash \{0\}$, we have
$$
\nu(0) = \frac{1}{|G|} \quad\text{and}\quad \mu(a) = \left(\frac{1}{|G|}\right)^{k-1}|G|^{k-2} = \frac{1}{|G|} \text{ for all $a \in G$}. 
$$
Therefore, $f(\bb{1}/|G|) = 0$. 
For $a \in G\backslash\{0\}$, let $\partial_a$ denote the partial derivative with respect to $\nu(a)$. For every $x \in G$, the product rule for derivatives implies that 
$$
\partial_a(\mu(x)) = (k-1)\left( \sum_{\substack{c_1, \ldots, c_{k-2} \in G \\  c_1+\cdots + c_{k-2} = -a-x}}\nu(c_1)\cdots \nu(c_{k-2}) - \sum_{\substack{c_1, \ldots, c_{k-2} \in G \\  c_1+\cdots + c_{k-2} = -x}}\nu(c_1)\cdots \nu(c_{k-2})\right).
$$
To ease the notation, we will abbreviate (as there is no danger of confusion)
$$
\sum_{x}\nu(c_1)\cdots \nu(c_{i}) = \sum_{\substack{c_1, \ldots, c_{i} \in G \\  c_1+\cdots + c_{i} = x}}\nu(c_1)\cdots \nu(c_{i})
$$
for every $i \ge 0$. Then we see that
\begin{align*}
\partial_a f & = \log \nu(a) - \log \nu(0) - \log \mu(a) + \log \mu(0) \\
& - \sum_{x\in G}\frac{\nu(x)}{\mu(x)}(k-1)\left( \sum_{-a-x}\nu(c_1)\cdots \nu(c_{k-2}) -  \sum_{-x}\nu(c_1)\cdots \nu(c_{k-2})\right).
\end{align*}
Then it follows that the gradient of $f$ at $\bb{1}/|G|$ is $0$. 
For every $a \in G \backslash \{ 0 \}$, we have
{\footnotesize
\begin{align*}
\partial_a\partial_a f = & \frac{1}{\nu(a)} + \frac{1}{\nu(0)} - \frac{(k-1)(\underset{-2a}{\sum} \nu(c_1)\cdots \nu(c_{k-2}) -  \underset{-a}{\sum} \nu(c_1)\cdots \nu(c_{k-2}))}{\mu(a)} \\ 
& + \frac{(k-1)(\underset{-a}{\sum}\nu(c_1)\cdots \nu(c_{k-2}) -  \underset{0}{\sum}\nu(c_1)\cdots \nu(c_{k-2}))}{\mu(0)}      \\
& -(k-1)\sum_{x \in G}\frac{\nu(x)(k-2) (\sum_{-2a-x}\nu(d_1)\cdots \nu(d_{k-3}) -  \sum_{-a-x}\nu(d_1)\cdots \nu(d_{k-3})     )}{\mu(x)}\\
& + (k-1)\sum_{x \in G}\frac{\nu(x)(k-2) (\sum_{-a-x}\nu(d_1)\cdots \nu(d_{k-3}) -  \sum_{-x}\nu(d_1)\cdots \nu(d_{k-3})     )}{\mu(x)} \\
& - (k-1)\left(\frac{\underset{-2a}{\sum} \nu(c_1)\cdots \nu(c_{k-2}) -  \underset{-a}{\sum} \nu(c_1)\cdots \nu(c_{k-2})}{\mu(a)} - \frac{\underset{-a}{\sum}\nu(c_1)\cdots \nu(c_{k-2}) -  \underset{0}{\sum}\nu(c_1)\cdots \nu(c_{k-2})}{\mu(0)}  \right) \\
& + \sum_{x \in G}\frac{\nu(x)}{\mu(x)^2}(k-1)^2\left(\sum_{-a-x}\nu(c_1)\cdots\mu(c_{k-2}) - \sum_{-x}\nu(c_1)\cdots\mu(c_{k-2}) \right)^2. 
\end{align*}
}
In particular, $\partial_a \partial_a f|_{\bb{1}/|G|} =2 |G|$. For every $a \neq b \in G\backslash\{0\}$, we have
{\footnotesize
\begin{align*}
\partial_b\partial_a f = & \frac{1}{\nu(0)} - (k-1)\frac{\underset{-b-a}{\sum}\nu(c_1)\cdots \nu(c_{k-2}) - \underset{-a}{\sum}\nu(c_1)\cdots \nu(c_{k-2})}{\mu(a)} \\
& + (k-1)\frac{\underset{-b}{\sum}\nu(c_1)\cdots \nu(c_{k-2}) - \underset{0}{\sum}\nu(c_1)\cdots \nu(c_{k-2})}{\mu (0)} \\
& - (k-1)\sum_{x \in G} \frac{\nu(x)(k-2)\left(\underset{-a-b-x}{\sum}\nu(d_1)\cdots \nu(d_{k-3}) - \underset{-a-x}{\sum}\nu(d_1)\cdots \nu(d_{k-3})\right)}{\mu(x)} \\
& + (k-1)\sum_{x \in G} \frac{\nu(x)(k-2)\left(\underset{-b-x}{\sum}\nu(d_1)\cdots \nu(d_{k-3}) - \underset{-x}{\sum}\nu(d_1)\cdots \nu(d_{k-3})\right)}{\mu(x)} \\
& - (k-1)\left(\frac{\underset{-b-a}{\sum}\nu(c_1)\cdots \nu(c_{k-2}) - \underset{-b}{\sum}\nu(c_1)\cdots \nu(c_{k-2})}{\mu(b)} - \frac{\underset{-a}{\sum}\nu(c_1)\cdots \nu(c_{k-2}) - \underset{0}{\sum}\nu(c_1)\cdots \nu(c_{k-2})}{\mu(0)}  \right) \\
& + \sum_{x \in G} \frac{\nu(x)(k-1)^2}{\mu(x)^2}\left(\underset{-a-x}{\sum}\nu(c_1)\cdots \nu(c_{k-2}) - \underset{-x}{\sum}\nu(c_1)\cdots \nu(c_{k-2}) \right)\left(\underset{-b-x}{\sum}\nu(c_1)\cdots \nu(c_{k-2}) - \underset{-x}{\sum}\nu(c_1)\cdots \nu(c_{k-2}) \right).
\end{align*}
}
In particular, $\partial_b \partial_a f|_{\bb{1}/|G|} =|G|$. Finally, it is straightforward to see that (4) holds.
\end{proof}

If $x \in \mathbb{R}'$ with $|x_a| \le t_n/n$ for all $a \in G\backslash \{0\}$, then by Taylor's expansion, it follows that
\begin{equation}
\label{eq: Taylor expansion}
nf\left(\frac{\mathbb{1}}{|G|} + x\right) = \frac{n}{2}x^TQx+ O\left(\frac{t_n^3}{n^2}\right). 
\end{equation}

As $t_n/\sqrt{n} = (k_n-1)C_n\sqrt{|G|\log n} \to \infty$ as $n \to \infty$, for every $x \in \mathbb{R}^{G \backslash \{0\}}$ there exists an integer $n_x$ such that $n > n_x$ implies the existence of $y \in K_n$ such that $0\le x_a - y_a < 1/\sqrt{n}$ for all $a \in G\backslash \{0\}$ (i.e., $x \in y + [0, \frac{1}{\sqrt{n}})^{G\backslash \{0\}}$). For such an $y$, we have
$$
|x^T Q x - y^TQy| \le \sum_{a,b \in G\backslash\{0\}}|Q(a,b)(x_ax_b - y_ay_b)| 
= O\left(\frac{t_n}{n}\right). 
$$
Since $k_n = O(n^{1/30 -\ep})$, we have $\lim_{n \to \infty}\frac{t_n}{n} = 0$ so 
$$\lim_{n \to \infty} (x^T Q x - y^TQy) = 0.$$ 
Therefore, we see that \eqref{eq: pointwise convergence} holds in our situation as well.

Let $M_{\mathrm{uni}}$ be the matrix $M_{\unl{n}}$ for $\unl{n} = \frac{n}{|G|}\bb{1}$. Then $M_{\mathrm{uni}}$ is clearly a square matrix of size $|G|$. Furthermore, the diagonal entries of $M_{\mathrm{uni}}$ are equal to $n^{k_n-1}(\frac{k_n-1}{|G|^2} + \frac{1}{|G|})$ and the off-diagonal entries are $\frac{(k_n-1)n^{k_n-1}}{|G|^2}$. It follows that
$$
\det(M_{\mathrm{uni}}) = \frac{k_n n^{(k_n-1)|G|}}{|G|^{|G|}}. 
$$
So, we have
$$
\frac{\det(M_{\mathrm{uni}})}{k_n\prod_{a \in G}n(k_n-1)_a} = 1. 
$$

Now let $\unl{n} \in B(n,G)$ and let $m= m_{\unl{n}}$ and $M= M_{\unl{n}}$. It follows from \eqref{eq: Taylor expansion} that
\begin{align*}
nD_{\KL}(\nu_{\unl{n}}||\mu_{\unl{n}}) = nf\left(\frac{P(\unl{n})}{n}\right) & = \frac{n}{2}\left(\frac{P(\unl{n})}{n} - \frac{\mathbb{1}}{|G|} \right)^TQ\left(\frac{P(\unl{n})}{n} - \frac{\mathbb{1}}{|G|} \right) + O\left(\frac{t_n^3}{n^2}\right)   \\
& = \frac{1}{2}y^TQy + O\left(\frac{t_n^3}{n^2}\right)
\end{align*}
where $y = \frac{P(\unl{n})- \frac{n}{|G|} \cdot \mathbb{1}}{\sqrt{n}}$.
Using the assumption that $k_n = O(n^{1/30- \ep})$, we get $nD_{\KL}(\nu_{\unl{n}}||\mu_{\unl{n}}) = \frac{1}{2}y^TQy + o(1)$, and it follows that
$$
\exp(-nD_{\KL}(\nu_{\unl{n}}||\mu_{\unl{n}})) = (1+o(1)) \exp\left(-\frac{1}{2}y^TQy \right). 
$$
Since $\unl{n} \in B(n,G)$, we have by Stirling's formula
$$
\al(\unl{n}) = (1+o(1)) \frac{\sqrt{|G|}^{|G|}}{\sqrt{2\pi n}^{|G|-1}}.
$$
For every $a,b \in G$, it follows from Lemma \ref{lem: lemma for section 2} below that 
$$
M(a,b) - M_{\mathrm{uni}}(a,b) = O(t_nn^{k_n-2}k_n^2).
$$
From this, it is straightforward to see that
$$
\det(M) = \left(k_n + O\left(\frac{t_n k_n^3}{n}\right)\right)\left(\frac{n^{k_n-1}}{|G|}\right)^{|G|} = k_n\left(\frac{n^{k_n-1}}{|G|}\right)^{|G|}(1+o(1)).
$$
Noting $n(k_n-1)_a = \frac{n^{k_n-1}}{|G|}+ O(t_nn^{k_n-2}k_n)$ (cf. Lemma \ref{lem: lem for 4.1(3)}), it follows that
$$
\prod_{a \in G} n(k_n-1)_a = \left(\frac{n^{k_n-1}}{|G|}\right)^{|G|}\left(1 + O\left(\frac{t_nk_n}{n}\right) \right) = \left(\frac{n^{k_n-1}}{|G|}\right)^{|G|}(1 + o(1)).
$$
Finally, \eqref{eq: expression for summand of main moment} follows from \eqref{eq: E formula 2}. 
As remarked earlier, the rest follows exactly as in Section 5.2 of \cite{Mes23}.

\begin{lem}
\label{lem: lemma for section 2}
For $\unl{n} \in B(n,G)$, let $M= M_{\unl{n}}$. Then for every $a,b \in G$, we have
$$
M(a,b) - M_{\mathrm{uni}}(a,b) = O(t_nn^{k_n-2}k_n^2). 
$$
\end{lem}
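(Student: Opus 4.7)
The plan is to reduce the computation to estimates for the convolution quantities $n(\ell)_a - n^\ell/|G|$ at the relevant indices $\ell \in \{1, k_n-2, k_n-1\}$ (or $\ell \in \{1, 2\}$ when $k_n = 3$), and then expand and compare term by term with the defining formula for $M(a,b)$.

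The key identity is obtained by writing $\ep_b := \nu_{\unl{n}}(b) - 1/|G|$. Since $\unl{n} \in B(n,G)$, we have $|\ep_b| \le t_n/n$ for every $b \in G$, while $\sum_{b \in G} \ep_b = 0$ holds automatically. Expanding
$$
n(\ell)_a = n^\ell \sum_{b_1+\cdots+b_\ell = -a} \prod_{i=1}^\ell \left(\frac{1}{|G|} + \ep_{b_i}\right)
$$
and grouping by the subset $S \subseteq [\ell]$ of indices contributing an $\ep$-factor, I would note that for every nonempty proper $S \subsetneq [\ell]$ at least one coordinate outside $S$ ranges freely over $G$, so the corresponding contribution factors through $\sum_{b \in G} \ep_b = 0$ and vanishes. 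Only $S = \emptyset$ (the main term $n^\ell/|G|$) and $S = [\ell]$ survive, yielding the clean identity
$$
n(\ell)_a - \frac{n^\ell}{|G|} = n^\ell \sum_{b_1+\cdots+b_\ell=-a} \prod_{i=1}^\ell \ep_{b_i}, \qquad \left|n(\ell)_a - \frac{n^\ell}{|G|}\right| \le |G|^{\ell-1} t_n^\ell.
$$

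Specialising to $\ell = 1$ gives $n(1)_a = n/|G| + O(t_n)$, and a one-line Taylor expansion of $\sqrt{1+x}$, valid since $|G|t_n/n = o(1)$ under $k_n = O(n^{1/30 - \ep})$, yields $\sqrt{n(1)_a n(1)_b} = n/|G| + O(t_n)$. Plugging these together with $n(k_n-2)_{a+b} = n^{k_n-2}/|G| + O(|G|^{k_n-3} t_n^{k_n-2})$ into
$$
M(a,b) - M_{\mathrm{uni}}(a,b) = (k_n-1) \left[ \sqrt{n(1)_a n(1)_b}\, n(k_n-2)_{a+b} - \frac{n^{k_n-1}}{|G|^2} \right]
$$
(for $a \ne b$), the dominant cross term is $(k_n - 1) \cdot (n^{k_n-2}/|G|) \cdot O(t_n) = O(k_n t_n n^{k_n-2})$, well within $O(k_n^2 t_n n^{k_n-2})$; the remaining mixed terms are dominated by the same quantity because $|G|t_n/n \to 0$. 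In the diagonal case there is an extra summand $n(k_n-1)_a - n^{k_n-1}/|G|$ bounded by $|G|^{k_n-2} t_n^{k_n-1} = (|G|t_n/n)^{k_n-2} \cdot t_n n^{k_n-2}$, which is therefore negligible against $k_n^2 t_n n^{k_n-2}$ in the assumed regime. The $k_n = 3$ case is parallel with $k_n-2$ replaced by $1$ and $k_n-1$ by $2$.

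The main obstacle is the vanishing of all intermediate cross terms in the expansion: without exploiting $\sum_b \ep_b = 0$, the naive sum over subsets $S \subseteq [\ell]$ would produce contributions whose combinatorial coefficients grow with $k_n$ and violate the target bound. Once this cancellation is in hand, everything else reduces to a short bookkeeping calculation using only the single smallness input $|G|t_n/n \to 0$.
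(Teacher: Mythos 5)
Your argument is correct but takes a genuinely different route from the paper's. The paper simply sandwiches $M(a,a)$ between $\frac{(k_n-1)(n\pm|G|t_n)^{k_n-1}}{|G|^2} + \frac{(n\pm|G|t_n)^{k_n-1}}{|G|}$ (using only $n/|G|-t_n\le n_b\le n/|G|+t_n$ for $\unl{n}\in B(n,G)$) and then applies the polynomial-difference estimate of Lemma \ref{lem: lem for 4.1}. You instead exploit $\sum_{b\in G}\ep_b=0$ to establish the exact identity $n(\ell)_a - n^\ell/|G| = n^\ell\sum_{b_1+\cdots+b_\ell=-a}\prod_{i=1}^\ell \ep_{b_i}$ for $\ell\ge 2$, which yields the sharper estimate $|n(\ell)_a - n^\ell/|G|| \le |G|^{\ell-1}t_n^\ell$, and then expand $M(a,b)-M_{\mathrm{uni}}(a,b)$ term by term. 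Both routes land at $O(t_n n^{k_n-2}k_n^2)$; your intermediate bound is sharper, which buys nothing for this particular lemma but is a clean structural observation, whereas the paper's sandwich-plus-Lemma-\ref{lem: lem for 4.1} approach is shorter. Two small remarks. First, your explanation of the cancellation is phrased backwards: for $\emptyset\ne S\subsetneq[\ell]$, the constraint $\sum_i b_i=-a$ is absorbed by a coordinate $j_0\notin S$ (which exists since $S$ is proper), after which every coordinate \emph{inside} $S$ ranges freely over $G$ and contributes a factor $\sum_{b\in G}\ep_b=0$; it is not that ``a coordinate outside $S$ ranges freely,'' though the resulting identity is of course correct. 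Second, the paper sets $n(1)_a:=n_a$ rather than $n_{-a}$, so the convolution formula for $n(\ell)_a$ holds only for $\ell\ge 2$; you treat $\ell=1$ directly from the $B(n,G)$ definition, so this causes no trouble, but it is worth being aware of the convention mismatch at $\ell=1$.
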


\begin{proof}
We only give a proof when $a=b$ and it can be proved similarly when $a\neq b$. By definition, we have
$$
|M(a,a) - M_{\mathrm{uni}}(a,a)| \le \frac{(k_n-1)(n+|G|t_n)^{k_n-1}}{|G|^2}+ \frac{(n+|G|t_n)^{k_n-1}}{|G|} - \frac{(k_n-1)n^{k_n-1}}{|G|^2} - \frac{n^{k_n-1}}{|G|}.
$$
Since $\lim_{n \to \infty} \frac{t_nk_n}{n} = 0$, the lemma follows from Lemma \ref{lem: lem for 4.1}.
\end{proof}



\section{Computing the moments: bounding the sum over \texorpdfstring{$B_1(n,H)$}{B1(n,H)}} \label{Sec_B1(n,H)}
Throughout this section, we assume the following.
\begin{enumerate}
\item
For every $\dt >0$, $\dt \log\log n< k_n$ for all sufficiently large $n$.
\item 
$k_n = O(n^{\frac{1}{24}- \ep})$ for some $\ep>0$.
\end{enumerate}
Note in particular that the second condition implies that
$$
\lim_{n \to \infty} \frac{k_nt_nr_n}{n} = 0.
$$

Let $H$ be a proper subgroup of $G$. Define
$$
B_1(n,H) = \{\unl{n} \in B(n,H): \text{there exists $g \in G\backslash H$ such that $2g \in H$ and $(g+H) \cap G_+(\unl{n}) \neq \emptyset$} \}
$$
and
$$
B_2(n,H) = B(n,H) \backslash B_1(n,H). 
$$
The goal of this section is to prove that
\begin{equation*}
\lim_{n \to \infty}\sum_{\unl{n} \in B_1(n, H)} E(\unl{n}) = 0
\end{equation*}
under the assumptions on the growth rate of $k_n$ as above.

\begin{rem}
\label{rem: remark about B1}
When $[G:H]$ is odd, it is clear that $B_1(n,H) = \emptyset$ by definition. However, it is straightforward to see that $B_1(n,H) \neq \emptyset$ if $[G:H]$ is even and $n$ is sufficiently large. Since we have no restriction on a finite abelian group $G$, we need to take $B_1(n,H)$ into account, whereas in \cite{Mes23}, this was unnecessary because $|G|$ was assumed to be odd there.
\end{rem}

\begin{lem}
\label{lem: DKL bound}
There exists $n_0>0$ such that for every $n > n_0$ and $\unl{n} \in B_1(n,H)$,
$$
D_{\KL}(\nu_{\unl{n}}||\mu_{\unl{n}}) \ge \frac{k_n}{n|G|}. 
$$
\end{lem}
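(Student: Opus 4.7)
The plan is to apply the data processing inequality for Kullback--Leibler divergence to reduce the bound to a two-point Bernoulli estimate. By the definition of $B_1(n,H)$, there exist $g \in G \setminus H$ with $2g \in H$ and some $a_0 \in C := g+H$ with $n_{a_0} \ge 1$; note that $-C = C$ since $-g = g - 2g \in g + H$. Let $\psi : G \to \{0,1\}$ be the indicator of $C$ and set $q_1 := \nu_{\unl{n}}(C)$ and $q_2 := \mu_{\unl{n}}(C)$. The data processing inequality applied to $\psi$ gives
\[
D_{\KL}(\nu_{\unl{n}} \| \mu_{\unl{n}}) \;\ge\; D_{\KL}\bigl(\mr{Ber}(q_1) \,\|\, \mr{Ber}(q_2)\bigr),
\]
and the defining conditions of $B(n,H)$ combined with the existence of $a_0$ force $\tfrac{1}{n} \le q_1 \le \tfrac{r_n}{n}$.

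To lower-bound $q_2$, I would interpret $\mu_{\unl{n}}(a) = \bP(b_1 + \cdots + b_{k_n - 1} = -a)$ for iid samples $b_i \sim \nu_{\unl{n}}$, and sum over $a \in C$ (using $-C = C$):
\[
q_2 \;=\; \bP\bigl(b_1 + \cdots + b_{k_n-1} \in C\bigr) \;\ge\; (k_n - 1)\, q_1\, \nu_{\unl{n}}(H)^{k_n-2} \;\ge\; (k_n - 1)\, q_1 \Bigl(1 - \tfrac{r_n}{n}\Bigr)^{k_n-2}.
\]
The lower bound restricts to the event that exactly one $b_i$ lies in $C$ (which alone places the sum in $C$) while the rest lie in $H$. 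Under the standing assumption $k_n = O(n^{1/24 - \ep})$, one checks $k_n r_n/n \to 0$, so $r := q_2/q_1 \ge (k_n - 1)(1 - o(1))$ with $o(1)$ uniform in $\unl{n}$. Similarly $q_2 \le 1 - (1 - r_n/n)^{k_n - 1} = O(k_n r_n/n)$, giving $q_1 + q_2 = o(1)$ uniformly in $\unl{n}$.

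A standard Taylor expansion of the Bernoulli KL divergence in the regime $q_1 \le q_2 \to 0$ yields
\[
D_{\KL}\bigl(\mr{Ber}(q_1) \,\|\, \mr{Ber}(q_2)\bigr) \;=\; q_1\bigl[(r-1) - \log r\bigr]\bigl(1 + O(q_1 + q_2)\bigr).
\]
Because $r \mapsto (r - 1) - \log r$ is monotone increasing on $[1,\infty)$ and $k_n \to \infty$ (which is forced by the hypothesis $\dt \log\log n < k_n$ for every $\dt > 0$), we obtain $(r-1) - \log r \ge k_n(1 - o(1))$. Combining with $q_1 \ge 1/n$,
\[
D_{\KL}(\nu_{\unl{n}} \| \mu_{\unl{n}}) \;\ge\; \frac{k_n}{n}\bigl(1 - o(1)\bigr) \;\ge\; \frac{k_n}{n |G|}
\]
for all $n$ past a threshold $n_0$ depending only on $|G|$ and the sequence $(k_n)$.

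The main obstacle is the case of slow-growing $k_n$: the coefficient $k_n - 2 - \log(k_n - 1)$ arising from $(r-1) - \log r$ must asymptotically dominate $k_n/|G|$, and this hinges precisely on $k_n \to \infty$. This is why the hypothesis that $k_n$ eventually exceeds every multiple of $\log\log n$ enters the statement, and why one should not expect the lemma to hold for bounded $k_n$. A secondary subtlety is that all the $o(1)$ quantities must be uniform across $\unl{n} \in B_1(n, H)$, which is automatic because each error depends only on $n,\,k_n,\,r_n,\,|G|$ rather than on $\unl{n}$ itself.
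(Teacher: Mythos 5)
Your argument is correct, but it takes a genuinely different route from the paper's. Both proofs rest on the two-event form of the log-sum / data-processing inequality for $D_{\KL}$, yet you apply it to the partition $\{C,\ G\setminus C\}$ with $C = g+H$, whereas the paper applies it to $\{a,\ G\setminus\{a\}\}$ for a single $a \in (g+H)\cap G_+$. The paper's pointwise choice forces it to estimate $\mu_{\unl{n}}(a) = \tfrac{(k_n-1)s}{|H|n} + O(\cdot)$ via Lemma~\ref{lem: lem for 4.1(2)}, where $s = \sum_{b\in a+H} n_b$ is the \emph{coset} mass; since $s/n_a$ can be as large as $r_n$, the $p\log(p/q)$ term picks up a $-\log(2k_nr_n)$ loss, and the final inequality $\tfrac{k_n-1}{|H|} - \log(2k_nr_n) - 1 \ge \tfrac{k_n}{|G|}$ is exactly where $\delta\log\log n < k_n$ is consumed inside the lemma's proof. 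Your coset-level comparison of $q_1 = \nu_{\unl{n}}(C)$ against $q_2 = \mu_{\unl{n}}(C)$ cancels the $s/n_a$ ratio, yielding the cleaner bound $r := q_2/q_1 \ge (k_n-1)(1-o(1))$ and hence $D_{\KL} \gtrsim q_1[(r-1)-\log r] \ge \tfrac{k_n}{n}(1-o(1))$ with no $\log r_n$ penalty; in effect your proof of the lemma only needs $k_n\to\infty$ together with the polynomial upper bound on $k_n$, and the $\log\log n$ hypothesis is only used later in Proposition~\ref{prop: moment for B1}, where $e^{-k_n/|G|}$ has to beat $k_n^{7|G|}(\log n)^{|G|}$. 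Two small cautions. First, the Taylor identity $D_{\KL}(\mathrm{Ber}(q_1)\|\mathrm{Ber}(q_2)) = q_1[(r-1)-\log r]\bigl(1 + O(q_1+q_2)\bigr)$ is indeed valid here, since a direct expansion gives $D_{\KL} = q_1[(r-1)-\log r] + \tfrac{1}{2}(q_2-q_1)^2 + O(q_2^3)$ and the nonnegative remainder is at most $O(q_2)$ times the main term once $(r-1)-\log r \gtrsim r$, but you should make this computation explicit rather than citing it as standard. Second, your closing sentence that "one should not expect the lemma to hold for bounded $k_n$" overstates what the method shows: the failure of the $2$-Sylow convergence for constant $k_n$ (Theorem~\ref{thm7a}) is established via Proposition~\ref{prop: moment for B1}, not by knowing this particular KL lower bound is sharp, so it is the \emph{method} that fails for bounded $k_n$, not necessarily the stated inequality.
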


\begin{proof}
Let $\unl{n} \in B_1(n,H)$. By the definition of $B_1(n,H)$, there exists $g \in G \backslash H$ such that $2g \in H$ and $(g+H) \cap G_+(\unl{n}) \neq \emptyset$. Let $a \in (g+H) \cap G_+(\unl{n})$. 
In particular, we have $a+H = -a+H$ and
$$
1 \le n_a \le r_n = (k_n-1)^2 C_n |G|\log n. 
$$
Let $p = \nu_{\unl{n}}(a)$ and $q = \mu_{\unl{n}}(a)$. By \cite[Lemma 2.2]{Mes23}, we have
$$
D_{\KL}(\nu_{\unl{n}}||\mu_{\unl{n}}) \ge p \log \frac{p}{q} + (1-p) \log \frac{1-p}{1-q}. 
$$
It follows from Lemma \ref{lem: lem for 4.1(2)} below that
$$
q = \frac{n(k_n-1)_a}{n^{k_n-1}}= \frac{(k_n-1)s}{|H|n} + O\left(\frac{t_nr_nk_n^2}{n^2}\right), 
$$
where
$$
s :=\sum_{b \in a+H} n_b = \sum_{b \in -a+H} n_b. 
$$
Note that we have
$$
1\le n_a \le s \le r_n. 
$$
It follows that for all sufficiently large $n$,
\begin{align*}
   D_{\KL}(\nu_{\unl{n}}||\mu_{\unl{n}}) & \ge \frac{n_a}{n}\log \frac{\frac{n_a}{n}}{\frac{(k_n-1)s}{|H|n} + O\left(\frac{t_nr_nk_n^2}{n^2}\right)} + \left(1-\frac{n_a}{n}\right)\left(\log\left(1-\frac{n_a}{n}\right) - \log \left(1- \frac{(k_n-1)s}{|H|n} - O\left(\frac{t_nr_nk_n^2}{n^2}\right)\right)\right) \\
   & \ge \frac{n_a}{n}\log \frac{\frac{n_a}{n}}{\frac{2(k_n-1)s}{|H|n}} + \left(1-\frac{n_a}{n}\right)\left(-\frac{n_a}{n}+\frac{(k_n-1)s}{|H|n} + O\left(\frac{t_nr_nk_n^2}{n^2}\right)\right)
\end{align*}
where we use the Taylor expansion $\log(1-x) = -x - x^2/2 - \cdots = -x + O(x^2)$ near $x=0$ for the last inequality. Then we see that for all sufficiently large $n$,
\begin{align*}
D_{\KL}(\nu_{\unl{n}}||\mu_{\unl{n}}) &\ge \frac{n_a}{n}\log \frac{1}{2k_nr_n} + \frac{(k_n-1)n_a}{|H|n} -\frac{n_a}{n} +  O\left(\frac{t_nr_nk_n^2}{n^2}\right)   \\
& = \frac{n_a}{n}\left(\frac{k_n-1}{|H|} - \log2k_ nr_n - 1\right) +  O\left(\frac{t_nr_nk_n^2}{n^2}\right)  \\
& \ge \frac{1}{n}\left(\frac{k_n-1}{|H|} - \log2k_nr_n - 1\right) +  O\left(\frac{t_nr_nk_n^2}{n^2}\right) \\ 
& \ge \frac{k_n}{n|G|}.
\end{align*}
Note that the last two inequalities hold by the fact that $r_n=O(k_n^6 \log n)$ and the assumptions on $k_n$ at the beginning of this section. This completes the proof. 
\end{proof}

\begin{lem}
\label{lem: lem for 4.1}  
Let $\{a_n\}$ and $\{b_n\}$ be sequences of positive integers and $c$ be a real number.  
Suppose that 
$$
\lim_{n \to \infty}\frac{a_n b_n}{n} = 0. 
$$
Then
$$
\left(n + ca_n \right)^{b_n} - n^{b_n} = O\left( n^{b_n-1}a_nb_n \right). 
$$
\end{lem}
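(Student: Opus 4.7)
The plan is to apply the Mean Value Theorem to the function $f(x) = x^{b_n}$ on the interval between $n$ and $n + ca_n$. This gives
$$
(n + ca_n)^{b_n} - n^{b_n} = c a_n b_n \, \xi_n^{b_n - 1}
$$
for some $\xi_n$ lying between $n$ and $n + ca_n$. The stated bound is therefore equivalent to showing that $\xi_n^{b_n - 1} = O(n^{b_n - 1})$, i.e.\ that $(\xi_n/n)^{b_n - 1}$ stays bounded as $n \to \infty$.

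First I would note that the hypothesis $a_n b_n / n \to 0$ forces $a_n / n \to 0$ (since $b_n \geq 1$), so for all sufficiently large $n$ we have $|c|a_n/n < 1/2$; in particular $n + c a_n > 0$ and $\xi_n / n$ lies in $(1 - |c|a_n/n,\; 1 + |c|a_n/n)$. Consequently
$$
\left(\frac{\xi_n}{n}\right)^{b_n - 1} \leq \left(1 + \frac{|c|a_n}{n}\right)^{b_n - 1} \leq \exp\!\left(\frac{|c|a_n(b_n-1)}{n}\right) \leq \exp\!\left(\frac{|c|a_n b_n}{n}\right).
$$
Since $a_n b_n / n \to 0$, the exponential on the right is bounded (by $e$, say, for large $n$), so $(\xi_n/n)^{b_n - 1} = O(1)$ uniformly in $n$.

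Combining the two displays yields
$$
\left|(n + ca_n)^{b_n} - n^{b_n}\right| = |c| a_n b_n \, \xi_n^{b_n - 1} = O(n^{b_n - 1} a_n b_n),
$$
as desired. The only potentially delicate point is verifying that raising a quantity very close to $1$ to the power $b_n - 1$ does not explode; this is precisely where the hypothesis $a_n b_n / n \to 0$ (rather than merely $a_n/n \to 0$) is used, via the elementary bound $(1+x)^{b_n-1} \leq e^{(b_n-1)x}$. Everything else reduces to a direct invocation of the MVT, so I do not expect any serious obstacle.
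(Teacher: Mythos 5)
Your proof is correct, but it takes a genuinely different route from the paper's. The paper expands $(n + ca_n)^{b_n} - n^{b_n}$ via the binomial theorem, bounds $\binom{b_n}{i} \le b_n^i$, and observes that the resulting sum is dominated by a geometric series with ratio $|c|a_nb_n/n < \frac12$ for large $n$, which gives the explicit bound $2\,|c|\,n^{b_n-1}a_nb_n$. You instead apply the Mean Value Theorem to collapse the difference to the single term $c\,a_n b_n\,\xi_n^{b_n-1}$ and control $(\xi_n/n)^{b_n-1}$ via the elementary estimate $(1+x)^m \le e^{mx}$, obtaining a factor $e$ in place of $2$ (irrelevant for a big-$O$ bound). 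Both arguments pivot on exactly the same point — that the hypothesis gives $a_nb_n/n \to 0$ and not merely $a_n/n \to 0$, which is what prevents the $(b_n-1)$-st power of a quantity near $1$ from blowing up — and you correctly identify and isolate this as the only delicate step. The paper's proof is purely algebraic (no calculus), while yours is shorter and arguably more conceptually transparent; either is perfectly acceptable here.
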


\begin{proof}
Assume that $n$ is large enough so that $\left|\frac{ca_nb_n}{n}\right| < \frac{1}{2}$. Then we have
\begin{equation*}
\left|\left(n + ca_n \right)^{b_n} - n^{b_n}\right|
\le \sum_{i=1}^{b_n} n^{b_n - i} (|c|a_n)^i b_n^i
\le n^{b_n - 1} |c|a_n b_n \frac{1}{1-\frac{|c|a_n b_n}{n}}
\le 2  n^{b_n - 1} |c|a_n b_n. \qedhere
\end{equation*}
\end{proof}

\begin{lem}
\label{lem: lem for 4.1(2)}
Let $\unl{n} \in B(n,H)$, 
$a \in G \backslash H$ and $$m: = \sum_{b \in -a + H} n_{b}.$$ Let
\begin{align*}
B & = \{(b_1, \ldots, b_{k_n-1}) \in G^{k_n-1} : b_1+ \cdots + b_{k_n-1} = -a \text{ and $k_n-2$ of $b_i$'s are in $H$}\}, \\   
B^c & = \{(b_1, \ldots, b_{k_n-1}) \in G^{k_n-1} : b_1+ \cdots + b_{k_n-1} = -a \text{ and at most $k_n-3$ of $b_i$'s are in $H$}\}.  
\end{align*}
Then we have the following.
\begin{enumerate}
\item
$$\sum_{(b_1, \ldots, b_{k_n-1}) \in B} n_{b_1}\cdots n_{b_{k_n-1}} = (k_n-1)\frac{n^{k_n-2}}{|H|}m + O(n^{k_n-3}t_nr_nk_n^2).$$
\item 
$$\sum_{(b_1, \ldots, b_{k_n-1}) \in B^c} n_{b_1}\cdots n_{b_{k_n-1}} = O(n^{k_n-3}r_n^2 k_n^2).$$
\item 
$$
n(k_n-1)_a = (k_n-1)\frac{n^{k_n-2}}{|H|}m + O(n^{k_n-3}t_nr_nk_n^2).
$$
\end{enumerate}
\end{lem}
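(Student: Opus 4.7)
The plan is as follows. Since $a \notin H$, every summand of $n(k_n-1)_a$ has $(b_1,\ldots,b_{k_n-1})$ summing to $-a \notin H$, so the case where all $b_i$ lie in $H$ contributes nothing. Hence $n(k_n-1)_a$ decomposes as the sum over $B$ plus the sum over $B^c$, and (3) follows immediately from (1) and (2). I would prove (1) and (2) in turn.

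For (1), I first fix the unique index $i \in [k_n-1]$ with $b_i \notin H$, producing a combinatorial factor of $k_n-1$. The constraint $b_1+\cdots+b_{k_n-1}=-a$ together with $b_j \in H$ for $j\neq i$ forces $b_i \in -a+H$, and the remaining $b_j \in H$ are free subject to having a prescribed sum in $H$, giving $|H|^{k_n-3}$ free choices. Writing $n_b = n/|H| + e_b$ with $|e_b| \le t_n$ for $b \in H$ (available since $\unl{n}\in B(n,H)$) and expanding,
$$
\prod_{j\neq i} n_{b_j} = \sum_{T \subseteq \{1,\ldots,k_n-1\}\setminus\{i\}} (n/|H|)^{k_n-2-|T|}\prod_{j\in T}e_{b_j}.
$$
The $T=\emptyset$ contribution summed over the free $b_j$'s is exactly $|H|^{k_n-3}(n/|H|)^{k_n-2} = n^{k_n-2}/|H|$, while the contribution of index sets of size $j \ge 1$ is bounded by $\binom{k_n-2}{j}|H|^{k_n-3}(n/|H|)^{k_n-2-j}t_n^j$. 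Under the hypothesis $k_n = O(n^{1/24-\ep})$ we have $k_n t_n/n \to 0$, so the $j=1$ error $O(k_n t_n n^{k_n-3})$ dominates all higher-order terms. Finally, multiplying by $\sum_{b_i \in -a+H} n_{b_i}=m$ and by $(k_n-1)$ yields the claimed main term $(k_n-1)(n^{k_n-2}/|H|)m$ with error $O(k_n^2 t_n m n^{k_n-3}) = O(n^{k_n-3}t_n r_n k_n^2)$ since $m \le r_n$.

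For (2), I stratify $B^c$ by the number $\ell \ge 2$ of indices $i$ with $b_i \notin H$. For a fixed subset $S \subseteq [k_n-1]$ of size $\ell$, the coordinates $(b_i)_{i\in S}$ range over $G \setminus H$ while the remaining $b_j \in H$ are constrained by the prescribed sum, admitting $|H|^{k_n-2-\ell}$ free choices. Using the crude bound $n_{b_j} \le 2n/|H|$ for $b_j \in H$, the inner $H$-sum is $O(n^{k_n-1-\ell}/|H|)$. Applying $\sum_{b\notin H} n_b \le r_n$ coordinatewise gives $\sum_{(b_i)_{i\in S}} \prod_{i\in S} n_{b_i} \le r_n^\ell$, and including $\binom{k_n-1}{\ell} = O(k_n^\ell)$ choices of $S$, the $\ell$-contribution is $O(k_n^\ell r_n^\ell n^{k_n-1-\ell})$. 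Successive ratios are $O(k_n r_n/n) \to 0$, so the $\ell=2$ term dominates and the total is $O(n^{k_n-3} r_n^2 k_n^2)$, as claimed. The main obstacle is the bookkeeping in (1): verifying that every cross-term in the multivariable expansion is absorbed by the $j=1$ term, and then that the subsequent summation over $b_i \in -a+H$ does not inflate the error beyond $O(k_n^2 t_n r_n n^{k_n-3})$. Both steps are comfortably handled by the quantitative hypothesis $k_n = O(n^{1/24-\ep})$ through the bounds $k_n t_n/n \to 0$ and $k_n r_n/n \to 0$.
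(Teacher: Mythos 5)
Your decomposition of $n(k_n-1)_a$ as the sum over $B$ plus the sum over $B^c$, the treatment of part (1) by first fixing the unique non-$H$ index and then expanding $n_b = n/|H| + e_b$, and the stratification of $B^c$ by the number $\ell$ of non-$H$ coordinates in part (2), all match the paper's approach. Part (1) is correct: your expansion is a more explicit version of the paper's bracketing between $(n/|H|-t_n)^{k_n-2}$ and $(n/|H|+t_n)^{k_n-2}$, and the error analysis via the binomial expansion (the $j=1$ term dominating because $k_n t_n/n \to 0$) is a valid reformulation of the paper's appeal to Lemma~\ref{lem: lem for 4.1}.

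However, there is a genuine gap in your part (2). You replace $n_{b_j} \le n/|H| + t_n$ (for $b_j \in H$) by the ``crude bound'' $n_{b_j} \le 2n/|H|$ and then claim the inner $H$-sum is $O(n^{k_n-1-\ell}/|H|)$. What you actually get is $|H|^{k_n-2-\ell}\bigl(2n/|H|\bigr)^{k_n-1-\ell} = 2^{k_n-1-\ell}\,n^{k_n-1-\ell}/|H|$, which carries an extra factor $2^{k_n-1-\ell}$. This factor cannot be absorbed into the implied constant: $k_n$ depends on $n$ and is allowed to grow as fast as $n^{1/24-\ep}$ under the standing hypothesis of Section~\ref{Sec_B1(n,H)}, so $2^{k_n}$ is superpolynomial in $n$. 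With that extra factor, the bound $O(n^{k_n-3}r_n^2k_n^2)$ in part (2) does \emph{not} follow, and downstream the error term in Lemma~\ref{lem: DKL bound} would be $O(2^{k_n}t_nr_nk_n^2/n^2)$ rather than $O(t_nr_nk_n^2/n^2)$, which is not $o(k_n/n)$ and breaks that proof unless $k_n = o(\log n)$. The paper avoids this precisely by keeping the factor $(n/|H|+t_n)^{k_n-1-\ell}$ intact, combining it with $|H|^{k_n-1-\ell}$ to form $(n+|H|t_n)^{k_n-1-\ell}$, and then invoking Lemma~\ref{lem: lem for 4.1} (which requires only $k_n t_n/n \to 0$) to conclude $(n+|H|t_n)^{k_n-1-\ell} = n^{k_n-1-\ell}\bigl(1 + O(k_nt_n/n)\bigr)$. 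Your part (1) already uses exactly this kind of binomial control; you should apply the same care in part (2) rather than the cruder $2n/|H|$ bound.
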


\begin{proof}
Assume that $n$ is large enough so that $n > t_n|H|$.
Since $\unl{n} \in B(n, H)$, we have $\left |n_b - \frac{n}{|H|} \right | \le t_n$ for every $b \in H$ and $\left | n_b \right | \le r_n$ for every $b \notin H$. We also have $m  \le r_n$. For simplicity, we write $(b) := (b_1, \ldots, b_{k_n-1})$. 
For $1\le i \le k_n-1$, let
$$
B_i := \{(b) \in B:  b_i \in -a+H~\text{and}~ b_j \in H ~\text{for all $j \neq i$}\}. 
$$
For $c \in -a+H$, define
$$
B_1(c) := \{(b) \in \mathcal{B}_1 : b_1 = c \}. 
$$
Then we have
$$
|B_1(c)|= |H|^{k_n-3}
$$
since for each $(b_1, \ldots, b_{k_n-1}) \in B_i(c)$, the first coordinate is fixed as $b_1=c$, we are free to choose $b_2, \ldots, b_{k_n-2} \in H$ and then $b_{k_n-1} = -c -\sum_{j=2}^{k_n-2} b_{j} - a \in H$ is determined. 
Thus we have
$$
n_c\left(\frac{n}{|H|} -t_n \right)^{k_n-2}|H|^{k_n-3}
\le \sum_{(b) \in B_1(c)} n_{b_1}\cdots n_{b_{k_n-1}}
\le n_c\left(\frac{n}{|H|} +t_n \right)^{k_n-2}|H|^{k_n-3},
$$
so it follows that
$$
m\left(\frac{n}{|H|} -t_n \right)^{k_n-2}|H|^{k_n-3}
\le \sum_{(b) \in B_1} n_{b_1}\cdots n_{b_{k_n-1}}
\le m\left(\frac{n}{|H|} +t_n \right)^{k_n-2}|H|^{k_n-3}.
$$
(By symmetry, the same holds for all $B_i$.) Since $B$ is a disjoint union of $B_1, \ldots, B_{k_n-1}$, we have
$$
(k_n-1)\left(\frac{n}{|H|} -t_n \right)^{k_n-2}|H|^{k_n-3}m
\le \sum_{(b) \in B} n_{b_1}\cdots n_{b_{k_n-1}}
\le (k_n-1)\left(\frac{n}{|H|} +t_n \right)^{k_n-2}|H|^{k_n-3}m.
$$
By Lemma \ref{lem: lem for 4.1}, we have
\begin{align*}
\left| \sum_{(b) \in B} n_{b_1}\cdots n_{b_{k_n-1}} - \frac{(k_n-1)n^{k_n-2}m}{|H|} \right| 
& \le \frac{(k_n-1)m}{|H|}\left((n + |H|t_n)^{k_n-2} - n^{k_n-2} \right) \\
& = O\left(n^{k_n-3} t_n r_nk_n^2\right),
\end{align*}
so (1) is true. For (2), note that
\begin{align*}
0 \le \sum_{(b) \in B^c} n_{b_1}\cdots n_{b_{k_n-1}}
& \le
\sum_{j=3}^{k_n} \left(\frac{n}{|H|} + t_n \right)^{k_n-j} r_n^{j-1} \binom{k_n-1}{j-1}|H|^{k_n-j} |G|^{j-2} \\
& \le\sum_{j=3}^{k_n} \left(\frac{n}{|H|} + t_n \right)^{k_n-j} r_n^{j-1} k_n^{j-1} |H|^{k_n-j} |G|^{j-2} \\
& \le (n + |H|t_n)^{k_n-3} r_n^{2}|G|k_n^2 \frac{1}{1 - \frac{k_nr_n|G|}{n}} 
\end{align*}
and $\frac{k_nr_n|G|}{n} < \frac{1}{2}$ when $n$ is sufficiently large. It is easy to see that (2) follows from the above inequality and Lemma \ref{lem: lem for 4.1}. Finally, (3) is immediate from (1) and (2). 
\end{proof}

For later use, we also estimate $n(k_n-2)_a$ for $a \in H$ when $\unl{n} \in B(n,H)$. 

\begin{lem}
\label{lem: lem for 4.1(3)}    
Let $\unl{n} \in B(n,H)$ and $a \in H$. 
Then 
$$
n(k_n-2)_a = \frac{n^{k_n-2}}{|H|}  + O(n^{k_n-3}t_nk_n). 
$$
\end{lem}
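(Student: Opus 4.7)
The plan is to mirror the strategy of Lemma \ref{lem: lem for 4.1(2)}. I would split the defining sum according to how many of the coordinates $b_1, \ldots, b_{k_n-2}$ fall inside $H$, writing $n(k_n-2)_a = S_1 + S_2$ where $S_1$ collects the tuples contained entirely in $H$ and $S_2$ the rest.

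For $S_1$: since $a \in H$, there are exactly $|H|^{k_n-3}$ tuples $(b_i) \in H^{k_n-2}$ with $\sum b_i = -a$ (freely choose the first $k_n-3$ coordinates in $H$, the last is determined). Combined with the pointwise bound $|n_b - n/|H|| \le t_n$ for every $b \in H$ (available since $\unl{n} \in B(n,H)$), a sandwich argument
$$
|H|^{k_n-3}\Bigl(\tfrac{n}{|H|} - t_n\Bigr)^{k_n-2} \le S_1 \le |H|^{k_n-3}\Bigl(\tfrac{n}{|H|} + t_n\Bigr)^{k_n-2}
$$
together with Lemma \ref{lem: lem for 4.1} (applied with base $n/|H|$, exponent $k_n-2$, and perturbation $t_n$; the hypothesis $|H|t_n k_n/n \to 0$ follows from the growth assumptions on $k_n$ stated at the start of this section) gives
$$
S_1 = \frac{n^{k_n-2}}{|H|} + O(n^{k_n-3} t_n k_n).
$$

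For $S_2$: projecting the equation $\sum b_i = -a$ to $G/H$ shows that a contributing tuple must have \emph{at least two} coordinates outside $H$, since a single nonzero class in $G/H$ cannot sum to $0$. Stratifying by the number $j \in \{2, \ldots, k_n-2\}$ of outside positions and using both $\sum_{b \notin H} n_b \le r_n$ and $n_b \le n/|H|+t_n$ for $b \in H$, one obtains a bound of the form
$$
S_2 \le \frac{1}{|H|}\sum_{j=2}^{k_n-2} \binom{k_n-2}{j} r_n^j \bigl( n + |H| t_n \bigr)^{k_n-2-j}.
$$
Subsequent terms in this sum decrease by a factor of order $k_n r_n / n = o(1)$, so the $j=2$ term dominates, giving $S_2 = O(k_n^2 r_n^2 n^{k_n-4})$.

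The final step is to check that this error is absorbed into $O(n^{k_n-3} t_n k_n)$, i.e., $k_n r_n^2 = O(n t_n)$. Substituting the closed forms $t_n = (k_n-1)C_n\sqrt{|G|n\log n}$ and $r_n = (k_n-1)^2 C_n |G|\log n$ with $C_n = O(k_n^4)$, this reduces to $k_n^8 (\log n)^{3/2} = O(n^{3/2})$, which holds under the standing assumption $k_n = O(n^{1/24 - \ep})$. Combining the estimates for $S_1$ and $S_2$ yields the lemma. The main obstacle is this final growth-rate bookkeeping; the $S_1$ piece is a routine Taylor-style estimate once Lemma \ref{lem: lem for 4.1} is in hand.
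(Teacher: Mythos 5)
Your proposal is correct and follows the same strategy as the paper's proof: decompose the sum into the tuples lying entirely in $H$ versus those with at least two coordinates outside $H$ (the case of exactly one outside coordinate being impossible by the mod-$H$ constraint), handle the first part via the sandwich bound and Lemma~\ref{lem: lem for 4.1}, and bound the second as an error term. The only material difference is presentational: you explicitly verify that the $\mathfrak{B}^c$-type error $O(k_n^2 r_n^2 n^{k_n-4})$ is dominated by $O(n^{k_n-3}t_n k_n)$, whereas the paper leaves this absorption implicit by pointing back to Lemma~\ref{lem: lem for 4.1(2)}; your bookkeeping (reducing to $k_n^8(\log n)^{3/2} = O(n^{3/2})$, comfortably within $k_n = O(n^{1/24-\ep})$) is correct.
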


\begin{proof}
We argue similarly as in the proof of Lemma \ref{lem: lem for 4.1(2)}. 
Let 
\begin{align*}
\mathfrak{B} & = \{(b_1, \ldots, b_{k_n-2}) \in G^{k_n-2} : b_1+ \cdots + b_{k_n-2} = -a \text{ and $b_i \in H$ for all $1\le i \le k_n-2$}\}, \\   
\mathfrak{B}^c & = \{(b_1, \ldots, b_{k_n-2}) \in G^{k_n-2} : b_1+ \cdots + b_{k_n-2} = -a \text{ and at most $k_n-4$ of $b_i$'s are in $H$}\}.  
\end{align*}    
Assume that $n$ is large enough so that $n > t_n|H|$.
Since $\unl{n} \in B(n, H)$, we have $\left |n_b - \frac{n}{|H|} \right | \le t_n$ for every $b \in H$ and $\left | n_b \right | \le r_n$ for every $b \notin H$ so
$$
\left(\frac{n}{|H|} -t_n \right)^{k_n-2}|H|^{k_n-3}
\le \sum_{(b_1, \ldots, b_{k_n-2}) \in \mathfrak{B}} n_{b_1}\cdots n_{b_{k_n-2}}
\le \left(\frac{n}{|H|} +t_n \right)^{k_n-2}|H|^{k_n-3}
$$
and
\begin{align*}
0 \le \sum_{(b_1, \ldots, b_{k_n-2}) \in \mathfrak{B}^c} n_{b_1}\cdots n_{b_{k_n-2}}
& \le
\sum_{j=4}^{k_n} \left(\frac{n}{|H|} + t_n \right)^{k_n-j} r_n^{j-2} \binom{k_n-2}{j-2}|H|^{k_n-j} |G|^{j-3} \\
& \le\sum_{j=4}^{k_n} \left(\frac{n}{|H|} + t_n \right)^{k_n-j} r_n^{j-2} k_n^{j-2} |H|^{k_n-j} |G|^{j-3} \\
& \le (n + |H|t_n)^{k_n-4} r_n^{2}|G|k_n^2 \frac{1}{1 - \frac{k_nr_n|G|}{n}}.
\end{align*}
Now one can proceed as in the proof of Lemma \ref{lem: lem for 4.1(2)} to derive that
$$
\sum_{(b_1, \ldots, b_{k_n-2}) \in \mathfrak{B}} n_{b_1}\cdots n_{b_{k_n-2}} = \frac{n^{k_n-2}}{|H|} + O(n^{k_n-3}t_nk_n)
$$
and
\begin{equation*}
\sum_{(b_1, \ldots, b_{k_n-2}) \in \mathfrak{B}^c} n_{b_1}\cdots n_{b_{k_n-2}}
 = O (n^{k_n-4}r_n^2k_n^2). \qedhere
\end{equation*}
\end{proof}

\begin{prop}
\label{prop: moment for B1}
Suppose that the following statements hold. 
\begin{enumerate}
\item
For every $\dt >0$, $\dt \log\log n< k_n$ for all sufficiently large $n$.
\item 
$k_n = O(n^{\frac{1}{24}- \ep})$ for some $\ep>0$.
\end{enumerate}
Then
$$
\lim_{n \to \infty}\sum_{\unl{n} \in B_1(n,H)} E(\unl{n}) = 0.
$$
\end{prop}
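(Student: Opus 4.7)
My plan is to combine the exponential Kullback--Leibler lower bound from Lemma~\ref{lem: DKL bound} with a Stirling-type upper bound on $\alpha(\unl{n})$ which extracts the factor $n^{-(|H|-1)/2}$, thereby cancelling the $n^{(|H|-1)/2}$ growth of $|B(n,H)|$ given by Lemma~\ref{lem: size of ball for H}. What remains is polynomial in $k_n$ and $\log n$ multiplied by $\exp(-k_n/|G|)$; since hypothesis (1) forces $k_n\to\infty$ faster than any multiple of $\log\log n$, this product tends to $0$.

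First, Lemma~\ref{lem: DKL bound} gives $nD_{\KL}(\nu_{\unl{n}}||\mu_{\unl{n}})\ge k_n/|G|$ for every $\unl{n}\in B_1(n,H)$ and $n$ sufficiently large, so $\exp(-nD_{\KL}(\nu_{\unl{n}}||\mu_{\unl{n}}))\le \exp(-k_n/|G|)$. Next, I sharpen each algebraic factor appearing in \eqref{eq: E formula 2}. The two-sided Stirling bound ($n!\le e\sqrt{n}(n/e)^n$ and $m!\ge\sqrt{2\pi m}(m/e)^m$ for $m\ge 1$) gives
$$\alpha(\unl{n})\le \frac{e}{(2\pi)^{|G_+(\unl{n})|/2}}\sqrt{\frac{n}{\prod_{a\in G_+(\unl{n})}n_a}}\le \frac{C_1}{\sqrt{n}^{\,|H|-1}},$$
where $C_1$ depends only on $|G|$; the final inequality uses $n_a\ge (1-o(1))n/|H|$ for $a\in H$ together with $n_a\ge 1$ for $a\in G_+(\unl{n})\sm H$, so that $\prod n_a\ge (n/|H|-t_n)^{|H|}$. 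For the determinantal prefactor, the positive semi-definiteness of $M$ (Lemma~\ref{lem: prob expression}) combined with Hadamard's inequality for PSD matrices and Lemma~\ref{lem: diagonal entry of M bound} gives
$$\frac{\det M}{k_n\prod_{a\in G_+(\unl{n})}n(k_n-1)_a}\le \frac{\prod_a M(a,a)}{k_n\prod_a n(k_n-1)_a}\le k_n^{|G_+(\unl{n})|-1}\le k_n^{|G|-1}.$$
Combining with the first step yields
$$E(\unl{n})\le \frac{C_1 k_n^{|G|-1}}{\sqrt{n}^{\,|H|-1}}\exp(-k_n/|G|).$$

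Finally, Lemma~\ref{lem: size of ball for H} gives $|B_1(n,H)|\le |B(n,H)|=O(k_n^{6|G|}\sqrt{n}^{\,|H|-1}(\log n)^{|G|})$, so
$$\sum_{\unl{n}\in B_1(n,H)}E(\unl{n})=O\!\lt(k_n^{7|G|-1}(\log n)^{|G|}\exp(-k_n/|G|)\rt).$$
Invoking hypothesis (1) with any $\dt>|G|^2$, eventually $\exp(-k_n/|G|)\le (\log n)^{-\dt/|G|}$, so that $(\log n)^{|G|}\exp(-k_n/(2|G|))\to 0$; and since $k_n\to\infty$, the elementary fact $x^M e^{-x/c}\to 0$ as $x\to\infty$ (applied with $x=k_n$, $M=7|G|-1$, $c=2|G|$) absorbs the polynomial $k_n^{7|G|-1}$ into the remaining factor $\exp(-k_n/(2|G|))$.

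\textbf{Main obstacle.} The delicate ingredient is Lemma~\ref{lem: DKL bound} itself, whose proof already uses both the structural condition $2g\in H$ (so that $-a\in g+H$ for $a\in g+H$, which is what triggers the estimate of $n(k_n-1)_a$ in Lemma~\ref{lem: lem for 4.1(2)}) and the growth hypothesis $k_n>\dt\log\log n$ (needed to dominate the correction term $\log r_n=O(\log\log n+\log k_n)$). The Stirling estimate for $\alpha$ is also essential whenever $|H|\ge 2$: the crude bound $\alpha\le 1$ of Section~\ref{Sec2} would leave a fatal $n^{(|H|-1)/2}$ factor that cannot be beaten by $\exp(-k_n/|G|)$ under only the hypothesis $k_n>\dt\log\log n$.
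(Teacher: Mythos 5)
Your proposal is correct and matches the paper's proof of Proposition~\ref{prop: moment for B1} step by step: both use Lemma~\ref{lem: DKL bound} to get $\exp(-nD_{\KL})\le e^{-k_n/|G|}$, Stirling's formula to extract $\alpha(\unl{n})=O(n^{(1-|H|)/2})$, Hadamard's inequality together with Lemma~\ref{lem: diagonal entry of M bound} for the determinantal prefactor, and Lemma~\ref{lem: size of ball for H} for the count of $B_1(n,H)$, arriving at the same final bound $O\bigl(k_n^{7|G|}(\log n)^{|G|}e^{-k_n/|G|}\bigr)$ which assumption~(1) sends to zero. Your closing remarks about where hypothesis (1) is consumed and why the sharp Stirling bound (rather than the crude $\alpha\le 1$) is indispensable also mirror the paper's structure, and the minor slack in how you split the exponential to absorb the polynomial in $k_n$ versus the $(\log n)^{|G|}$ factor is harmless.
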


\begin{proof}
Let $\unl{n} \in B_1(n,H)$. Recall that
$$
E(\unl{n}) = \al(\unl{n})\frac{\det(M)}{k_n\prod_{a \in G_+}n(k_n-1)_a}\exp(-n D_{\KL}(\nu_{\unl{n}}||\mu_{\unl{n}})), 
$$
where $G_+ = G_+(\unl{n})$ and
$$
\al(\unl{n}) = \frac{n!}{\prod_{a \in G}n_a!}\exp\left(n\sum_{a \in G_+} \nu(a)\log \nu(a)\right).
$$
By Stirling's formula, we have as $\unl{n} \in B(n,H)$,
\begin{equation}
\label{eq: Stirling formula for alpha}
\al(\unl{n}) = O\left(\frac{\sqrt{n}}{\prod_{a \in G_+} \sqrt{n_a}}\right) = O\left(\frac{\sqrt{n}}{\left(n/|H|-t_n\right)^{\frac{|H|}{2}}}\right) = O\left( n^{\frac{1-|H|}{2}}\right).    
\end{equation}
Since $M$ is positive semi-definite, it follows from the Hadamard's inequality \cite[Theorem 7.8.1]{HJ13} and Lemma \ref{lem: diagonal entry of M bound} that
\begin{equation}
\label{eq: 3.6.1}
\frac{\det(M)}{\prod_{a \in G_+} n(k_n-1)_a} \le \frac{\prod_{a \in G_+} M(a,a)}{\prod_{a \in G_+} n(k_n-1)_a} \le k_n^{|G|}.
\end{equation}
Then by Lemma \ref{lem: DKL bound}, we have
$$
E(\unl{n}) = O\left(\frac{k_n^{|G|}}{\sqrt{n}^{|H|-1}e^{\frac{k_n}{|G|}}} \right). 
$$
By Lemma \ref{lem: size of ball for H}, it follows that
$$
\sum_{\unl{n} \in B_1(n,H)} E(\unl{n}) = O\left(\frac{k_n^{7|G|}(\log n)^{|G|}}{e^{\frac{k_n}{|G|}}}\right).
$$
Now the proposition follows from assumption (1).
\end{proof}

\section{Computing the moments: bounding the sum over \texorpdfstring{$B_2(n,H)$}{B2(n,H)}} \label{Sec_B2(n,H)}
Throughout this section, we assume that for every $\ep >0$, $k_n < n^{\ep}$ for all sufficiently large $n$. 
Recall that
$$
B_1(n,H) = \{\unl{n} \in B(n,H): \text{there exists $g \in G\backslash H$ such that $2g \in H$ and $(g+H) \cap G_+(\unl{n}) \neq \emptyset$} \}. 
$$
and
$$
B_2(n,H) = B(n,H) \backslash B_1(n,H). 
$$
The goal of this section is to prove that 
\begin{equation}
\label{eq: Sec5 main}
\lim_{n \to \infty}\sum_{\unl{n} \in B_2(n, H)} E(\unl{n}) = 0
\end{equation}
under the assumption on $k_n$ by adopting the idea of Section 5.1 in \cite{Mes23}.

Assume that $n$ is large enough so that $n > |H|t_n$ and let $\unl{n} \in B_2(n,H)$. Since $\unl{n} \in B_2(n, H) \subseteq B(n,H)$, we have $|n_a - \frac{n}{|H|}| \le t_n < \frac{n}{|H|}$, $n_a>0$ for every $a \in H$ so $H \subseteq G_+ =G_+(\unl{n})$. 
For $g \in G \backslash H$, if $(g+H)$ and $G_+$ intersect then $2g \not\in H$ so $g+ H \neq -g +H$. 
Thus we can find $g_1, g_2, \ldots, g_h \in G \backslash H$ such that $G_+$ intersect $F_i = (g_i + H) \cup (-g_i + H)$ for every $1\le i \le h$, but $G_+$ does not intersect any coset $g + H$ other than the following $2h+1$ distinct cosets
$$
H, g_1 + H, \ldots, g_h + H, -g_1 + H, \ldots, -g_h+ H.
$$
We write
$$
\ell:= \left|\{1\le i \le h: G_+ \cap (g_i + H) \neq \emptyset ~~\text{and}~~ G_+ \cap (-g_i + H) \neq \emptyset  \}\right|.
$$

\begin{lem} 
\label{lem: lemma 5.2 of Mes23}
Let $\unl{n} \in B_2(n,H)$, $\ell$ be as above and $M=M_{\unl{n}}$ be the matrix associated to $\unl{n}$. Then 
$$
\frac{\det(M)}{\prod_{a \in G_+} n(k_n-1)_a} = O\left(k_n^{|G|}r_n^{|G|-|H|}\left(\frac{t_n}{n}\right)^\ell\right). 
$$
\end{lem}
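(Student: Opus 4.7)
The plan is to sharpen the naive Hadamard bound $\det(M) \le k_n^{|G|} \prod_{a \in G_+} n(k_n-1)_a$, which follows from PSD-ness of $M$ together with Lemma~\ref{lem: diagonal entry of M bound} (and was already used in~\eqref{eq: 3.6.1}). The lemma asks us to gain two further factors: $r_n^{|G|-|H|}$, reflecting the fact that each row/column of $M$ indexed by $G_+ \sm H$ carries a small prefactor $n(1)_a \le r_n$; and $(t_n/n)^\ell$, reflecting the paired-coset structure characterizing $B_2(n,H)$.

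The first step is to decompose $G_+$ according to cosets of $H$. For $n$ large we have $H \sub G_+$ (by the $B(n,H)$ conditions and $n > |H| t_n$), and $G_+ \sm H$ lies in $h + \ell$ cosets of $H$, namely the $\ell$ paired cosets $\pm g_i + H$ and $h - \ell$ unpaired ones. I would then scale $M \mapsto M' := D^{-1} M D^{-1}$ with $D(a,a) = \sqrt{n(1)_a}$ for $a \in G_+ \sm H$ and $D(a,a) = 1$ for $a \in H$; this preserves positive semi-definiteness and yields $\det(M) = \prod_{a \in G_+ \sm H} n(1)_a \cdot \det(M')$. Since $n(1)_a = n_a \le r_n$ for $a \notin H$, the prefactor is bounded by $r_n^{|G|-|H|}$.

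Next, the diagonal entries of $M'$ can be estimated using Lemma~\ref{lem: diagonal entry of M bound} together with the asymptotics for $n(k_n-1)_a$ and $n(k_n-2)_{2a}$ obtained from Lemmas~\ref{lem: lem for 4.1(2)} and~\ref{lem: lem for 4.1(3)}: for $a \in H$ the diagonal is of size $O(k_n n^{k_n-1}/|H|)$, and for $a \in G_+ \sm H$ it is of size $O(k_n n(k_n-1)_a / n(1)_a)$. Applying Hadamard's inequality to $M'$ and reassembling then gives a bound of the form $\det(M)/\prod_{a \in G_+} n(k_n-1)_a = O(k_n^{|G|} r_n^{|G|-|H|})$, missing only the $(t_n/n)^\ell$ factor.

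The main obstacle is this last factor; this is also the step which does not appear in~\cite{Mes23} (since $|G|$ was assumed odd there, forcing $\ell = 0$). The key structural input is Lemma~\ref{lem: lem for 4.1(2)}, which shows that $n(k_n-2)_c$ for $c \notin H$ is determined, up to an error of order $t_n r_n k_n^2 n^{k_n-3}$, by the mass on the single coset $-c + H$. For each of the $\ell$ paired indices $i$, I would exploit this by performing a row/column operation that pairs each row of $M'$ indexed by $g_i + H$ with its partner in $-g_i + H$: because the leading-order dependence of these rows on $b$ is the same function of the cosets (both rows probe $n(k_n-2)_c$ with $c$ in the partner coset $\mp g_i + H$), their difference is controlled by the fluctuation $|\nu_{\unl{n}}(a) - \nu_H(a)| \le t_n/n$ guaranteed by $\unl{n} \in B(n,H)$. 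This produces one row per paired pair whose entries are smaller by a factor of order $t_n/n$ than the diagonal scale, and a final application of Hadamard's inequality to the resulting matrix yields the extra $(t_n/n)^\ell$ factor. Tracking the dependence on $k_n$ through this paired cancellation — and verifying that the error terms in Lemma~\ref{lem: lem for 4.1(2)} are indeed subdominant to the $t_n/n$ gain under the assumption $k_n < n^{\ep}$ — is the technically most delicate part of the argument.
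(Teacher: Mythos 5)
Your high-level plan is right in spirit — you correctly identify that the $(t_n/n)^\ell$ factor must come from a paired-coset cancellation driven by the $B(n,H)$ fluctuation bound, and that is indeed the structural input the paper exploits. However, the mechanism you describe for producing this cancellation does not work. You propose "a row/column operation that pairs each row of $M'$ indexed by $g_i+H$ with its partner in $-g_i+H$," claiming "their difference is controlled by the fluctuation." There is in general no bijection between $(g_i+H)\cap G_+$ and $(-g_i+H)\cap G_+$ (they may have different cardinalities), and a single row difference is not small: writing $s_1=\sum_{b\in g_i+H}n_b$ and $s_2=\sum_{b\in -g_i+H}n_b$, one finds for $a\in (g_i+H)\cap G_+$ and $a'\in(-g_i+H)\cap G_+$ that $M'(a,a)\approx(k_n-1)\tfrac{n^{k_n-2}}{|H|}\tfrac{s_2}{n_a}$, $M'(a',a')\approx(k_n-1)\tfrac{n^{k_n-2}}{|H|}\tfrac{s_1}{n_{a'}}$, and $M'(a,a')\approx(k_n-1)\tfrac{n^{k_n-2}}{|H|}$, so $(e_a-e_{a'})^TM'(e_a-e_{a'})\approx(k_n-1)\tfrac{n^{k_n-2}}{|H|}\bigl(\tfrac{s_2}{n_a}+\tfrac{s_1}{n_{a'}}-2\bigr)$, which does not vanish to leading order (unless all the $n_a$ in both cosets happen to coincide). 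The cancellation the paper (following Mes23) uses is a quadratic-form phenomenon across the whole block $F_i\cap G_+$: the test vector $v$ with $v_a=\sqrt{n_a}$ on $(g_i+H)\cap G_+$ and $v_a=-\sqrt{n_a}$ on $(-g_i+H)\cap G_+$ makes the two diagonal contributions (each $\sim(k_n-1)\tfrac{n^{k_n-2}}{|H|}s_1s_2$) cancel the cross-coset term $\sim-2(k_n-1)\tfrac{n^{k_n-2}}{|H|}s_1s_2$, leaving $v^TM_iv/\|v\|^2=O(n^{k_n-3}t_nr_nk_n^2)$. The paper then bounds the remaining eigenvalues of $M_i$ by $\Tr(M_i)=O(n^{k_n-2}k_nr_n)$, not by a second Hadamard step. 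This small-Rayleigh-quotient mechanism cannot be reproduced by a row-by-row pairing.

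Two further issues. First, you do not use the Fischer (block Hadamard) inequality $\det(M)/\prod_a n(k_n-1)_a\le\prod_i\det(M_i)/\prod_{a\in F_i\cap G_+}n(k_n-1)_a$, which is how the paper localizes the argument to the coset blocks $M_i$; without it, the paired cancellation must be run on the full $M'$ and it is unclear that the cross-block entries can be controlled. Second, the claim that rescaling to $M'=D^{-1}MD^{-1}$ and applying Hadamard yields $O(k_n^{|G|}r_n^{|G|-|H|})$ is a miscount: since $M'(a,a)\le k_n n(k_n-1)_a/n(1)_a$ for $a\notin H$ and $\det(M)=\prod_{a\notin H}n(1)_a\cdot\det(M')$, the $n(1)_a$ factors cancel exactly and one recovers only $O(k_n^{|G|})$ — which is in fact the bound \eqref{eq: 3.6.1} already gives without any rescaling. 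This happens to be stronger than claimed, so it does not break the argument, but it means the rescaling is doing nothing; in the paper the $r_n$ powers arise from the trace bound on the paired blocks, not from a rescaling.
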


\begin{proof}
We closely follow the proof of \cite[Lemma 5.2]{Mes23}. 
Let $M_i$ be the submatrix of $M$ determined by the rows and columns indexed by $F_i \cap G_+$ (let $F_0 = H$). As in the proof of \cite[Lemma 5.2]{Mes23}, we have
\begin{equation}
\label{eq: 3.1.0}
\frac{\det(M)}{\prod_{a \in G_+} n(k_n-1)_a} \le \prod_{i=0}^h \frac{\det(M_i)}{\prod_{a \in F_i \cap G_+} n(k_n-1)_a}. 
\end{equation}
As $M_i$ is positive semi-definite (Lemma \ref{lem: prob expression}), it follows from Hadamard's inequality \cite[Theorem 7.8.1]{HJ13} and Lemma \ref{lem: diagonal entry of M bound} that
\begin{equation}
\label{eq: 3.1.1}
\frac{\det(M_i)}{\prod_{a \in F_i \cap G_+} n(k_n-1)_a} \le k_n^{|F_i \cap G_+|}.  \end{equation}

Now suppose that $G_+$ intersects both $g_i+H$ and $-g_i + H$. Let
$$
s_1 = \sum_{b \in g_i + H}n_b \quad\text{and}\quad s_2 = \sum_{b \in -g_i + H}n_b.
$$
By assumption, $1 \le s_1, s_2 \le r_n$. Let $a \in (g_i + H) \cap G_+$. Then by Lemma \ref{lem: lem for 4.1(2)}(3),
$$
n(k_n-1)_a = (k_n-1)\frac{n^{k_n-2}}{|H|}s_2 + O(n^{k_n-3}t_nr_nk_n^2). 
$$
By a discussion after \eqref{eq: Sec5 main}, we have $2a \not\in H$. 
By Lemma \ref{lem: lem for 4.1(2)}(3) (but replacing $k_n$ with $k_n-1$),
$$
(k_n-1)n_an(k_n-2)_{2a} = O(k_n r_n k_n n^{k_n-3}r_n) = O(n^{k_n-3}r_n^2 k_n^2).
$$
Therefore, it follows that
$$
M_i(a,a) = (k_n-1)n_an(k_n-2)_{2a} + n(k_n-1)_a = (k_n-1)\frac{n^{k_n-2}}{|H|}s_2 + O(n^{k_n-3}t_nr_n k_n^2).
$$
Similarly, if $a \in (-g_i + H) \cap G_+$, then
$$
M_i(a,a) = (k_n-1)\frac{n^{k_n-2}}{|H|}s_1 + O(n^{k_n-3}t_nr_n k_n^2).
$$
If $a \in (g_i+H) \cap G_+$ and $b \in (-g_i + H) \cap G_+$, then $a+b \in H$ so Lemma \ref{lem: lem for 4.1(3)} implies that
\begin{align*}
M_i(a,b) = M_i(b,a) & = (k_n-1) \sqrt{n_an_b}\left(\frac{n^{k_n-2}}{|H|} + O(n^{k_n-3}t_nk_n)\right)\\
& = (k_n-1) \sqrt{n_an_b}\frac{n^{k_n-2}}{|H|} + O(n^{k_n-3}t_nr_nk_n^2).
\end{align*}
If $a,b \in (g_i+H) \cap G_+$ and $a \neq b$, then $a+b \not\in H$ as $2g_i \not\in H$, so by Lemma \ref{lem: lem for 4.1(2)}(3) we have
$$
M_i(a,b) =  O(n^{k_n-3}r_n^2 k_n^2) = O(n^{k_n-3}t_nr_nk_n^2).
$$
Similarly, if  $a,b \in (-g_i+H) \cap G_+$ and $a \neq b$, then 
$$
M_i(a,b) =  O(n^{k_n-3}r_n^2 k_n^2) = O(n^{k_n-3}t_nr_nk_n^2).
$$
Define the vector $v \in \bR^{F_i \cap G_+}$ so that
$$
v(a) = \begin{cases}
 \sqrt{n_a}  & \text{if $~a \in (g_i+H) \cap G_+$,}\\
 -\sqrt{n_a} & \text{if $~a \in (-g_i+H) \cap G_+$.}
\end{cases}
$$
By the computation in \cite[Lemma 5.2]{Mes23}, we have
$$
\frac{v^TM_iv}{||v||_2^2} = O\left(n^{k_n-3}t_nr_nk_n^2\right). 
$$

As in the proof of \cite[Lemma 5.2]{Mes23}, the smallest eigenvalue of $M_i$ is at most $O(n^{k_n-3}t_nr_nk_n^2)$. Furthermore, all the other eigenvalues are at most $\Tr(M_i) = O(n^{k_n-2}k_nr_n)$. It follows that 
$$
\det(M_i) = O(n^{(k_n-2)|F_i \cap G_+| - 1}r_n^{|F_i \cap G_+|}t_nk_n^{|F_i \cap G_+|+1}).
$$
For $a \in G_+ \cap (g_i + H)$, we have
$$
n(k_n-1)_a = (k_n-1)\frac{n^{k_n-2}}{|H|}s_2 + O(n^{k_n-3}t_nr_nk_n^2) \ge \frac{n^{k_n-2}k_n}{2|H|}
$$
when $n$ is sufficiently large and the same inequality holds for $a \in G_+ \cap (-g_i + H)$. Hence, when $n$ is sufficiently large, we have
$$
\frac{\det(M_i)}{\prod_{a \in F_i \cap G_+} n(k_n-1)_a} \le\frac{ O(n^{(k_n-2)|F_i \cap G_+| - 1}r_n^{|F_i \cap G_+|}t_nk_n^{|F_i \cap G_+|+1})}{(\frac{n^{k_n-2}k_n}{2|H|})^{|F_i \cap G_+|}} = O\left(r_n^{|F_i\cap G_+|}\frac{t_nk_n}{n}\right).
$$
Combining this with \eqref{eq: 3.1.0} and \eqref{eq: 3.1.1}, we obtain that
\begin{equation*}
\frac{\det(M)}{\prod_{a \in G_+} n(k_n-1)_a} = O\left(k_n^{|G|-\ell}r_n^{|G|-|H|}\left(\frac{t_n k_n}{n}\right)^\ell\right) =  O\left(k_n^{|G|}r_n^{|G|-|H|}\left(\frac{t_n}{n}\right)^\ell\right). \qedhere
\end{equation*}
\end{proof}

\begin{lem}
\label{lem: lemma 5.3 of Mes23}
Let $\unl{n} \in B(n,H)$. Suppose that there exists $a \in G_+$ such that $(-a+H) \cap G_+ = \emptyset$. Then for all sufficiently large $n$, we have
$$
D_{\KL}(\nu_{\unl{n}}||\mu_{\unl{n}}) \ge \frac{\log n}{2n}.
$$    
\end{lem}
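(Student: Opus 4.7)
The plan is to apply the partition inequality for KL divergence (\cite[Lemma 2.2]{Mes23}, already used in the proof of Lemma~\ref{lem: DKL bound}) to the two-set partition $\{\{a\}, G\setminus\{a\}\}$. This reduces the problem to an elementary Bernoulli KL estimate with $p := \nu_{\unl{n}}(a) = n_a/n$ and $q := \mu_{\unl{n}}(a) = n(k_n-1)_a/n^{k_n-1}$, provided I can show $q$ is unusually small. The crux is the observation that the hypothesis $(-a+H)\cap G_+ = \emptyset$ forces the dominant contribution to $n(k_n-1)_a$ to vanish identically: every tuple $(b_1, \ldots, b_{k_n-1})$ with $k_n-2$ coordinates in $H$ and total sum $-a$ has its remaining coordinate in $-a+H$ (since the coset sum reduces to $-a \pmod H$), forcing $n_{b_i} = 0$ by hypothesis.

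I will first verify $a \notin H$: since $\unl{n} \in B(n,H)$ and $n$ is large, $n_0 \ge n/|H| - t_n > 0$, so $0 \in G_+$; thus $a \in H$ would place $0$ into $-a+H$, contradicting the hypothesis. Hence Lemma~\ref{lem: lem for 4.1(2)} applies. Combining the vanishing above with Lemma~\ref{lem: lem for 4.1(2)}(2), the sum over $B$ is exactly zero, and only the $O(n^{k_n-3}r_n^2 k_n^2)$ bound on the sum over $B^c$ survives, so
$$
q = O\!\left(\frac{r_n^2 k_n^2}{n^2}\right) = O\!\left(\frac{k_n^{14}(\log n)^2}{n^2}\right).
$$
Under the standing assumption $k_n = n^{o(1)}$, this yields $q \le n^{-1.9}$ for all sufficiently large $n$, while $p \ge 1/n$ because $a \in G_+$.

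The finish is the Bernoulli KL bound. Using the elementary inequality $\log(1-x) \ge -x/(1-x)$ with $x = (p-q)/(1-q)$, the second term in the partition bound satisfies $(1-p)\log\tfrac{1-p}{1-q} \ge -(p-q) \ge -p$. For the main term, $p \ge 1/n$ and $q \le n^{-1.9}$ give $\log(p/q) \ge -\log n + 1.9 \log n = 0.9\log n$, hence $p\log(p/q) \ge 0.9\, p\log n$. Combining,
$$
D_{\KL}(\nu_{\unl{n}}||\mu_{\unl{n}}) \ge p(0.9\log n - 1) \ge \tfrac{1}{2}\,p\log n \ge \frac{\log n}{2n}
$$
for all sufficiently large $n$.

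I do not anticipate a major obstacle. The substantive content is the identification of the vanishing $B$-sum, which extracts an extra factor of $n$ in the denominator of $q$; the rest is a one-coordinate KL calculation. The only bookkeeping point is ensuring that the polynomial factors in $k_n$ and $\log n$ appearing in the $r_n^2 k_n^2/n^2$ bound on $q$ are absorbed by the hypothesis $k_n = n^{o(1)}$, which is immediate since any fixed power of $k_n$ times any fixed power of $\log n$ is $n^{o(1)}$.
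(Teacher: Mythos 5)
Your proof matches the paper's argument step for step: both establish $a \notin H$ (the paper via $H \subseteq G_+$ for $\unl{n} \in B(n,H)$, you via $0 \in G_+$, which are equivalent), both observe that the hypothesis $(-a+H)\cap G_+ = \emptyset$ annihilates the $B$-sum in Lemma~\ref{lem: lem for 4.1(2)} (since a tuple with $k_n-2$ coordinates in $H$ summing to $-a$ forces the last coordinate into $-a+H$), both then take $\mu_{\unl{n}}(a) = O(k_n^2 r_n^2/n^2)$ from the $B^c$-bound and $\nu_{\unl{n}}(a) \ge 1/n$, and both finish via the two-set partition bound for KL divergence. The only presentational difference is that you spell out the final Bernoulli KL estimate, which the paper simply defers to Mészáros' original Lemma 5.3.
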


\begin{proof}
Since $H \subseteq G_+$, we have $a \not\in H$. 
The proof of \cite[Lemma 5.3]{Mes23} works with a minor change as follows.
Recall that
$$
n(k_n-1)_a = \sum_{\substack{b_1, \ldots, b_{k_n-1} \in G \\b_1 + \cdots + b_{k_n-1} = -a}} n_{b_1}\cdots n_{b_{k_n-1}}. 
$$
Assume that $(b_1, \ldots, b_{k_n-1}) \in G^{k_n-1}$ satisfies $b_1+\cdots +b_{k_n-1} = -a$. If $k_n-2$ of $b_i$'s are in $H$, then there exists $i \in [k_n-1]$ such that $b_i \in -a+H$ so $n_{b_i}=0$ (and so $n_{b_1}\cdots n_{b_{k_n-1}}=0$) by the assumption of the lemma. Thus we have
$$
n(k_n-1)_a = \sum_{\substack{b_1, \ldots, b_{k_n-1} \in G \\b_1 + \cdots + b_{k_n-1} = -a \\\text{At least $2$ of $b_i$'s are in $G \backslash H$}}} n_{b_1}\cdots n_{b_{k_n-1}} \le \sum_{j=2}^{k_n-1} \binom{k_n-1}{j} \left(\frac{n}{|H|} + t_n\right)^{k_n-1-j} r_n^j |H|^{k_n-1-j}|G|^{j-1},
$$
where the summand of the right-hand side bounds the sum of $n_{b_1}\cdots n_{b_{k_n-1}}$ for those $(b_1, b_2, \ldots, b_{k_n-1})$ such that the number of $b_i$'s not in $H$ is exactly $j$. 
By Lemma \ref{lem: lem for 4.1}, we have
\begin{align*}
n(k_n-1)_a &\le \sum_{j=2}^{k_n-1} k_n^j(n + |H|t_n)^{k_n-1-j}r_n^j|G|^{j-1} \\
& \le k_n^2(n + |H|t_n)^{k_n-3}r_n^2|G|\frac{1}{1-\frac{k_nr_n|G|} {n+|H|t_n}} \\
& = k_n^2\left(n^{k_n-3} + O(n^{k_n-4}t_nk_n)\right)r_n^2|G|\frac{1}{1-\frac{k_nr_n|G|} {n+|H|t_n}} \\
& = O(n^{k_n-3}r_n^2k_n^2).
\end{align*}
Therefore, it follows that
$$
\mu_{\unl{n}}(a) = \frac{n(k_n-1)_a}{n^{k_n-1}} \le O\left(\frac{k_n^2r_n^2}{n^2}\right)
$$
so when $n$ is sufficiently large, we have 
$$
\mu_{\unl{n}}(a) \le n^{-\frac{5}{3}}.
$$
Also, note that $1/n\le \nu_{\unl{n}}(a) \le r_n/n$. 
Let $p:= \nu_{\unl{n}}(a)$ and $q:=\mu_{\unl{n}}(a)$. By \cite[Lemma 2.2]{Mes23}, we have
$$
D_{\KL}(\nu_{\unl{n}}||\mu_{\unl{n}}) \ge p \log \frac{p}{q} + (1-p) \log \frac{1-p}{1-q}. 
$$
Regarding the right hand side as a function on $q$, we see that it is decreasing on $(0,p]$. If $n$ is sufficiently large, we have $q \le n^{-\frac{5}{3}} \le n^{-1} \le p$. Hence, for a large enough $n$,
$$
D_{\KL}(\nu_{\unl{n}}||\mu_{\unl{n}}) \ge p \log pn^{\frac{5}{3}} + (1-p)\log\frac{1-p}{1-n^{-\frac{5}{3}}} \ge p \log n^{\frac{2}{3}} + (1-p) \log(1-p). 
$$
For sufficiently large $n$, we have $p \in [0, \frac{1}{2}]$ so $\log(1-p) \ge (-2\log 2)p$. It follows that for large enough $n$,
\begin{equation*}
D_{\KL}(\nu_{\unl{n}}||\mu_{\unl{n}}) \ge p\left(\frac{2}{3}\log n - 2\log2\right) \ge \frac{\log n}{2n}. \qedhere
\end{equation*}

\end{proof}

Recall that
$$
E(\unl{n}) = \al(\unl{n})\frac{\det(M)}{k_n\prod_{a \in G_+}n(k_n-1)_a}\exp(-n D_{\KL}(\nu_{\unl{n}}||\mu_{\unl{n}})). 
$$

\begin{lem}
\label{lem: lemma 5.4 of Mes23}
Suppose that for every $\ep >0$, $k_n < n^{\ep}$ for all sufficiently large $n$. Then for every $\xi >0$, the following holds for all $\unl{n} \in B_2(n,H)$.
$$
\frac{\det(M)}{\prod_{a \in G_+} n(k_n-1)_a} \exp(-nD_{\KL}(\nu_{\unl{n}}||\mu_{\unl{n}})) \le O\left(\frac{1}{n^{\frac{1}{2}-\xi }}\right).
$$
\end{lem}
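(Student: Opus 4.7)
The plan is to split $B_2(n,H)$ according to whether $\ell \ge 1$ or $\ell = 0$, and handle each half by invoking a different one of Lemmas \ref{lem: lemma 5.2 of Mes23} and \ref{lem: lemma 5.3 of Mes23}. Before splitting, I would observe that since $H$ is a proper subgroup of $G$ and $\unl{n} \in B_2(n,H) \subseteq D_n'$, the elements $a$ with $n_a > 0$ generate $G$, so $G_+ \not\subseteq H$. Consequently at least one coset of the form $g_i + H$ with $g_i \notin H$ meets $G_+$, i.e.\ $h \ge 1$. This simple observation is what forces at least one of the two lemmas to produce an $n^{-1/2}$ saving.

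In the first case $\ell \ge 1$, Lemma \ref{lem: lemma 5.2 of Mes23} gives
$$
\frac{\det(M)}{\prod_{a \in G_+} n(k_n-1)_a} = O\!\left(k_n^{|G|}\,r_n^{|G|-|H|}\,\left(\tfrac{t_n}{n}\right)^{\ell}\right).
$$
Since $k_n < n^{\ep}$ for every $\ep > 0$ eventually and $C_n = O(k_n^4)$, one has $k_n^{|G|} r_n^{|G|-|H|} = O(n^{\xi/2})$ and $t_n/n = O(n^{-1/2+\xi/2})$ for arbitrarily small $\xi > 0$. Using $\ell \ge 1$ and bounding $(t_n/n)^\ell \le t_n/n$, the right-hand side is already $O(n^{-1/2+\xi})$. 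Multiplying by the trivial bound $\exp(-n D_{\KL}(\nu_{\unl{n}}||\mu_{\unl{n}})) \le 1$ then finishes this case.

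In the complementary case $\ell = 0$, I would exploit the fact that $h \ge 1$ to locate an element of $G_+$ to which Lemma \ref{lem: lemma 5.3 of Mes23} applies. Since $\ell = 0$ and $h \ge 1$, there is some index $i$ such that exactly one of $g_i + H$ and $-g_i + H$ meets $G_+$; after swapping $g_i$ with $-g_i$ if necessary, assume $(g_i + H) \cap G_+ \neq \emptyset$ and $(-g_i + H) \cap G_+ = \emptyset$. Choose $a \in (g_i + H) \cap G_+$; then $-a + H = -g_i + H$ is disjoint from $G_+$. Lemma \ref{lem: lemma 5.3 of Mes23} now yields $D_{\KL}(\nu_{\unl{n}}||\mu_{\unl{n}}) \ge \frac{\log n}{2n}$, so $\exp(-nD_{\KL}) \le n^{-1/2}$. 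Combining this with the crude bound from Lemma \ref{lem: lemma 5.2 of Mes23} (with $\ell = 0$), namely $O(k_n^{|G|} r_n^{|G|-|H|}) = O(n^{\xi})$, again gives the desired $O(n^{-1/2+\xi})$.

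There is no real obstacle; the only subtle point is recognising that the two cases $\ell \ge 1$ and $\ell = 0$ are genuinely exclusive but exhaustive and that $h \ge 1$ guarantees the latter can always be turned into a Lemma \ref{lem: lemma 5.3 of Mes23} application. One bookkeeping nuisance worth flagging is keeping the growth of $k_n$, $C_n$, $t_n$, $r_n$ all absorbed into a single $n^{\xi}$ at the end; since the hypothesis $k_n < n^{\ep}$ holds for every $\ep>0$, any fixed polynomial combination of these quantities can be swallowed by choosing $\ep$ small relative to $\xi$.
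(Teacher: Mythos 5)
Your proposal is correct and follows essentially the same approach as the paper: split on $\ell = 0$ versus $\ell \ge 1$, use Lemma~\ref{lem: lemma 5.3 of Mes23} to get the $n^{-1/2}$ saving from $\exp(-nD_{\KL})$ in the first case, and use Lemma~\ref{lem: lemma 5.2 of Mes23} with $\exp(-nD_{\KL}) \le 1$ (Gibbs) in the second, absorbing all $k_n$-, $C_n$-, $t_n$-, $r_n$-polynomial factors into $n^{\xi}$. The only cosmetic difference is that in the $\ell = 0$ case the paper bounds $\det(M)/\prod n(k_n-1)_a$ directly by $k_n^{|G|}$ via \eqref{eq: 3.6.1} while you invoke Lemma~\ref{lem: lemma 5.2 of Mes23} at $\ell=0$, yielding the slightly looser but equally absorbable $O(k_n^{|G|} r_n^{|G|-|H|})$; both are fine.
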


\begin{proof}
Suppose that $\ell=0$ ($\ell$ is defined as above). Since $G_+$ generates $G$, we can choose $a \in G_+ \backslash H$. For this $a$, we have $(-a+H) \cap G_+ = \emptyset$ as $\ell=0$. By Lemma \ref{lem: lemma 5.3 of Mes23} and \eqref{eq: 3.6.1}, we have 
$$
\frac{\det(M)}{\prod_{a \in G_+} n(k-1)_a} \exp(-nD_{\KL}(\nu_{\unl{n}}||\mu_{\unl{n}})) 
\le \frac{k_n^{|G|}}{\sqrt{n}} = O\left(\frac{1}{n^{\frac{1}{2}-\xi}}\right).
$$
If $\ell>0$, then Gibbs' inequality (\cite[Lemma 2.1]{Mes23}) together with Lemma \ref{lem: lemma 5.2 of Mes23} implies that 
\begin{equation*}
\frac{\det(M)}{\prod_{a \in G_+} n(k_n-1)_a} \exp(-nD_{\KL}(\nu_{\unl{n}}||\mu_{\unl{n}})) 
\le \frac{\det(M)}{\prod_{a \in G_+} n(k_n-1)_a}
\le O\left(k_n^{|G|}r_n^{|G|-|H|} \frac{t_n}{n}\right) = O\left(\frac{1}{n^{\frac{1}{2}-\xi}}\right). \qedhere
\end{equation*}
\end{proof}

\begin{rem}
In Lemma \ref{lem: lemma 5.4 of Mes23}, we should assume that $k_n \ll n^\ep$
for arbitrary $\ep >0$ to obtain a sufficiently strong bound on $k_n^{|G|}$ (note that $|G|$ can be arbitrarily large). 
This is exactly why the same upper bound assumption on $k_n$ was required in the statements of our main theorems as well. 
In the other parts of the paper, it suffices to assume the weaker bound
$k_n= O(n^{\frac{1}{30} - \delta})$ for some $\delta>0$.
\end{rem}

\begin{prop}
\label{prop: moment for B2}    
Suppose that for every $\ep >0$, $k_n < n^{\ep}$ for all sufficiently large $n$. Then
$$
\lim_{n\to \infty} \sum_{\unl{n} \in B_2(n,H)} E(\unl{n}) = 0. 
$$
\end{prop}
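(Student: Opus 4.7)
The plan is to combine the pointwise bound on $E(\unl{n})$ with the cardinality estimate for $B(n,H)$. Starting from the formula
$$
E(\unl{n}) = \al(\unl{n}) \cdot \frac{1}{k_n} \cdot \frac{\det(M)}{\prod_{a \in G_+} n(k_n-1)_a}\exp(-n D_{\KL}(\nu_{\unl{n}}||\mu_{\unl{n}})),
$$
I would bound each factor separately. First, since $\unl{n} \in B_2(n,H) \subseteq B(n,H)$, the Stirling estimate \eqref{eq: Stirling formula for alpha} gives $\al(\unl{n}) = O(n^{(1-|H|)/2})$. Second, Lemma \ref{lem: lemma 5.4 of Mes23} shows that for every $\xi>0$,
$$
\frac{\det(M)}{\prod_{a \in G_+} n(k_n-1)_a} \exp(-nD_{\KL}(\nu_{\unl{n}}||\mu_{\unl{n}})) = O\!\left(n^{-\frac{1}{2}+\xi}\right).
$$
Multiplying the two estimates together yields a uniform per-term bound
$$
E(\unl{n}) = O\!\left(\frac{n^{-|H|/2 + \xi}}{k_n}\right)
$$
valid for every $\unl{n} \in B_2(n,H)$ and every $\xi>0$.

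Next I would sum over $B_2(n,H) \subseteq B(n,H)$ and invoke Lemma \ref{lem: size of ball for H}, which gives $|B(n,H)| = O(k_n^{6|G|} n^{(|H|-1)/2}(\log n)^{|G|})$. Combining these bounds,
$$
\sum_{\unl{n} \in B_2(n,H)} E(\unl{n}) = O\!\left(k_n^{6|G|-1}\, n^{-\frac{1}{2}+\xi}\,(\log n)^{|G|}\right).
$$
Finally, using the hypothesis that $k_n < n^{\ep}$ for every $\ep > 0$ (and sufficiently large $n$), I would choose $\ep$ and $\xi$ small enough that $(6|G|-1)\ep + \xi < \tfrac{1}{2}$. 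The exponent on $n$ is then strictly negative, so the logarithmic factor is absorbed and the sum tends to $0$ as $n \to \infty$.

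There is essentially no serious obstacle here: the proposition is a bookkeeping step that packages together the nontrivial estimates already established in Lemmas \ref{lem: lemma 5.2 of Mes23}, \ref{lem: lemma 5.3 of Mes23}, and \ref{lem: lemma 5.4 of Mes23}, along with the cardinality bound from Lemma \ref{lem: size of ball for H}. The only subtlety to watch is that $|G|$ appears in the exponent of $k_n$ after applying Lemma \ref{lem: size of ball for H}, which is precisely the reason the hypothesis on $k_n$ must be $k_n < n^\ep$ for \emph{every} $\ep>0$ rather than the weaker polynomial bound used elsewhere in the paper; this matches the comment made in the remark following Lemma \ref{lem: lemma 5.4 of Mes23}.
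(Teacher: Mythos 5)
Your proof is correct and follows essentially the same route as the paper: Stirling's estimate \eqref{eq: Stirling formula for alpha} for $\al(\unl{n})$, Lemma \ref{lem: lemma 5.4 of Mes23} for the remaining factor, Lemma \ref{lem: size of ball for H} for the cardinality, and the hypothesis $k_n < n^{\ep}$ to absorb the polynomial-in-$k_n$ and logarithmic factors. The only cosmetic difference is that you carry the $k_n^{6|G|-1}(\log n)^{|G|}$ factor explicitly to the final step, whereas the paper folds it into an $n^{\xi}$ already when stating the bound on $|B(n,H)|$.
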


\begin{proof}
By Lemma \ref{lem: size of ball for H} and the assumption on $k_n$, it follows that
$$
|B_2(n,H)| \le |B(n,H)| = O\left(n^{\frac{|H|}{2} - \frac{1}{2}+\xi}\right).
$$
for every $\xi>0$. 
By \eqref{eq: Stirling formula for alpha} and Lemma \ref{lem: lemma 5.4 of Mes23}, we have
$$
E(\unl{n}) = O\left(n^{\xi - \frac{|H|}{2}}\right)
$$
for every $\xi>0$. Now we have
$$
\sum_{\unl{n} \in B_2(n,H)} E(\unl{n}) = O\left(n^{2\xi-\frac{1}{2}}\right)
$$
and we complete the proof by taking $\xi < \frac{1}{4}$. 
\end{proof}

\section{Convergence of moments and convergence to the Cohen--Lenstra distribution} \label{Sec_CL all primes}
In this section, we prove our main theorems. We first prove that the moments of $\cok(A_n)$ converge to $1$ under certain assumptions. As remarked in Section \ref{Sec1}, this implies the convergence of $\cok(A_n)$ to the Cohen--Lenstra distribution by Wood's theorem \cite[Theorem 3.1]{Woo19}.

\begin{thm}
\label{thm: moment theorem 1}
Let $G$ be a finite abelian group. Assume that a sequence $(k_n)_{n=1}^{\infty}$ satisfies the following:
\begin{enumerate}
    \item $\gcd(|G|, k_n)=1$ for all sufficiently large $n$;
    \item for every $\ep>0$, $k_n < n^{\ep}$ for all sufficiently large $n$;
    \item if $|G|$ is even, then for every $\dt>0$, $\dt \log\log n < k_n$ for all sufficiently large $n$.
\end{enumerate}
Then 
\begin{equation}
\label{eq: moment theorem}
\lim_{n \to \infty} \bE(\#\Sur(\cok(A_n), G)) = 1.    
\end{equation}
\end{thm}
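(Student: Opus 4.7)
The plan is to assemble Theorem \ref{thm: moment theorem 1} from the propositions already proved in Sections \ref{Sec2}--\ref{Sec_B2(n,H)}. The hypotheses of the theorem are deliberately set up to imply the hypotheses of each of those propositions: condition (2), requiring $k_n < n^\ep$ for every $\ep > 0$, is (eventually) stronger than each of the upper bounds $k_n = O(n^{1/10-\ep})$, $k_n = O(n^{1/30-\ep})$, and $k_n = O(n^{1/24-\ep})$ needed in Propositions \ref{prop: moments sum splits}, \ref{prop: main moment}, and \ref{prop: moment for B1}, respectively; condition (1) gives the coprimality needed by Proposition \ref{prop: moments sum splits}; and condition (3) supplies the lower bound on $k_n$ required by Proposition \ref{prop: moment for B1} whenever $|G|$ is even. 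So the work consists of verifying these compatibilities and then assembling the pieces.

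First, I would apply Proposition \ref{prop: moments sum splits} to reduce
$$
\bE(\#\Sur(\cok(A_n), G)) = \sum_{H \in \text{Sub}(G)} \sum_{\unl{n} \in B(n, H)} E(\unl{n}) + o(1).
$$
It then suffices to show that the $H = G$ term tends to $1$ and that for every proper subgroup $H \subsetneq G$ the sum $\sum_{\unl{n} \in B(n, H)} E(\unl{n})$ tends to $0$. The $H = G$ term is handled immediately by Proposition \ref{prop: main moment}.

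For a proper subgroup $H$, I would split $B(n,H) = B_1(n,H) \sqcup B_2(n,H)$. The contribution of $B_2(n,H)$ vanishes in the limit by Proposition \ref{prop: moment for B2}, which needs only condition (2). For $B_1(n,H)$, I would split into two cases. If $[G:H]$ is odd, then by Remark \ref{rem: remark about B1} we have $B_1(n,H) = \emptyset$ and there is nothing to show; in particular, when $|G|$ is odd this handles all proper subgroups and condition (3) is vacuous. If $[G:H]$ is even, then $|G|$ is even, so condition (3) of the theorem is in force, and Proposition \ref{prop: moment for B1} delivers $\sum_{\unl{n} \in B_1(n, H)} E(\unl{n}) \to 0$. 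Summing finitely many vanishing contributions gives the result.

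In this sense, there is no genuine obstacle left at the level of this theorem; the real work has already been done in the preceding sections. If I were locating the hardest ingredient on which the assembly rests, it would be Proposition \ref{prop: moment for B1}: the lower-bound hypothesis $k_n > \delta \log\log n$ is precisely what allows the exponential decay $\exp(-k_n/|G|)$ coming from the bound $D_{\KL}(\nu_{\unl{n}} \| \mu_{\unl{n}}) \ge k_n/(n|G|)$ of Lemma \ref{lem: DKL bound} to overwhelm the polynomial-in-$k_n$ blow-up from $|B_1(n,H)|$ (Lemma \ref{lem: size of ball for H}) and from the Hadamard bound on $\det(M)/\prod_a n(k_n-1)_a$. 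This delicate balance is the new ingredient beyond \cite{Mes23} and is what permits handling even $|G|$, including the case $|G| = 2$.
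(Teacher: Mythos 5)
Your proposal is correct and takes essentially the same approach as the paper: reduce via Proposition \ref{prop: moments sum splits}, handle $B(n,G)$ by Proposition \ref{prop: main moment}, and handle each proper $H$ by splitting into $B_1(n,H)$ and $B_2(n,H)$ and invoking Propositions \ref{prop: moment for B1} and \ref{prop: moment for B2}. The only (harmless) difference is that you observe $B_1(n,H)=\emptyset$ whenever $[G:H]$ is odd, whereas the paper simply treats the two cases $|G|$ odd versus $|G|$ even; both are valid.
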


\begin{proof}
 By assumption (1) and Proposition \ref{prop: moments sum splits}, it is enough to show that
$$
\lim_{n \to \infty}\sum_{H \in \text{Sub}(G)} \sum_{\unl{n} \in B(n, H)} E(\unl{n}) = 1. 
$$
By assumption (2) and Proposition \ref{prop: main moment}, we have
$$
\lim_{n \to \infty} \sum_{\unl{n} \in B(n, G)} E(\unl{n}) = 1. 
$$
Let $H$ be a proper subgroup of $G$ and $B_1(n,H)$ and $B_2(n,H)$ be as in the beginning of Section \ref{Sec_B1(n,H)}. By assumption (2) and Proposition \ref{prop: moment for B2}, we have
$$
\lim_{n \to \infty} \sum_{\unl{n} \in B_2(n, H)} E(\unl{n}) = 0. 
$$
If $|G|$ is odd, then $B_2(n,H) = B(n,H)$ and this finishes the proof. Suppose that $|G|$ is even. By assumption (2), (3) and Proposition \ref{prop: moment for B1}, we have
$$
\lim_{n \to \infty} \sum_{\unl{n} \in B_1(n, H)} E(\unl{n}) = 0.
$$
This completes the proof. 
\end{proof}

\begin{thm}
\label{thm: cokernel distribution theorem 1}
Let $G$ be a finite abelian group and $\cP$ be a finite set of primes including those dividing $|G|$. 
Assume that a sequence $(k_n)_{n=1}^{\infty}$ satisfies the following:
\begin{enumerate}
    \item for every prime $p$ in $\cP$, $p \nmid k_n$ for all sufficiently large $n$;
    \item for every $\ep>0$, $k_n < n^{\ep}$ for all sufficiently large $n$;
    \item if $2 \in \cP$, then for every $\dt>0$, $\dt \log\log n < k_n$ for all sufficiently large $n$.
\end{enumerate}
Then
$$
\lim_{n\to \infty} \bP\left(\bigoplus_{p \in \cP} \cok(A_n)_p \cong G\right) = \frac{1}{|\Aut(G)|}\prod_{p\in \cP} \prod_{i=1}^\infty (1-p^{-i})
= \prod_{p \in \cP} \nu_{\CL, p}(G_p).
$$
\end{thm}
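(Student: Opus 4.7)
The plan is to deduce Theorem \ref{thm: cokernel distribution theorem 1} from the moment computation in Theorem \ref{thm: moment theorem 1} by invoking Wood's moment-determines-distribution theorem \cite[Theorem 3.1]{Woo19}. The key observation is that for any finite abelian group $G$ whose order is supported on the primes in $\cP$, we have
\[
\#\Sur(\cok(A_n), G) = \#\Sur\!\left( \bigoplus_{p \in \cP} \cok(A_n)_p,\, G \right),
\]
since every homomorphism from $\cok(A_n)$ to $G$ factors through the $\cP$-part.

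First I would fix a finite abelian group $G$ with $|G|$ supported on $\cP$ and verify that the hypotheses of Theorem \ref{thm: moment theorem 1} hold for such a $G$. Assumption (1) of Theorem \ref{thm: cokernel distribution theorem 1_intro} gives $\gcd(|G|, k_n) = 1$ for all sufficiently large $n$; assumption (2) transfers directly; and if $|G|$ is even then $2 \in \cP$, so assumption (3) yields the $\delta \log\log n < k_n$ condition. Thus Theorem \ref{thm: moment theorem 1} applies and gives
\[
\lim_{n \to \infty} \bE\!\left(\#\Sur\!\left( \bigoplus_{p \in \cP} \cok(A_n)_p,\, G \right)\right) = 1
\]
for every such $G$.

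Next I would compare these limiting moments against those of the product measure $\prod_{p \in \cP} \nu_{\CL, p}$ on the category of finite abelian $\cP$-groups. It is standard (and is the computation underlying the Cohen--Lenstra heuristic) that
\[
\sum_H \left( \prod_{p \in \cP} \nu_{\CL, p}(H_p) \right) \#\Sur(H, G) = 1
\]
for every $G$ with $|G|$ supported on $\cP$. So the limiting moments agree with those of the target distribution.

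The final step is to apply Wood's theorem: since the $\cP$-part $\bigoplus_{p \in \cP} \cok(A_n)_p$ is a sequence of random finite abelian groups whose $G$-moments converge to $1$ for every admissible $G$, and since these moments grow sufficiently slowly in $|G|$ (being constantly $1$), Wood's theorem implies pointwise convergence of the distributions, yielding the claimed formula. The only subtlety to check is that Wood's hypothesis on moment growth is satisfied; here it is trivial because each moment tends to the finite constant $1$. I expect no serious obstacle in this argument: the hard analytic work has already been done in Theorem \ref{thm: moment theorem 1}, and this final theorem is essentially a clean packaging of that result through Wood's criterion.
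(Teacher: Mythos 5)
Your proposal is correct and follows essentially the same route as the paper: the paper itself states (both in Section~\ref{Sec1} after Theorem~\ref{thm: moment theorem 1_intro} and at the start of Section~\ref{Sec_CL all primes}) that Theorem~\ref{thm: cokernel distribution theorem 1} follows from Theorem~\ref{thm: moment theorem 1} together with Wood's moment-determines-distribution theorem \cite[Theorem 3.1]{Woo19}, and your reduction --- identifying $\#\Sur(\cok(A_n),G)$ with $\#\Sur(\bigoplus_{p\in\cP}\cok(A_n)_p,G)$ for $G$ supported on $\cP$, checking that the hypotheses of Theorem~\ref{thm: moment theorem 1} are met uniformly over such $G$ because $\cP$ is finite, noting that the Cohen--Lenstra product measure has all $G$-moments equal to $1$, and then invoking Wood's criterion with the trivially-small moment growth --- is precisely the intended argument.
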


The following two corollaries are special cases of Theorem \ref{thm: moment theorem 1} and \ref{thm: cokernel distribution theorem 1}, respectively. 

\begin{cor}
Suppose that the following hold: 
\begin{enumerate}
\item 
for every prime $p$, $p \nmid k_n$ for all sufficiently large $n$;
\item 
for every $\ep>0$, $k_n < n^{\ep}$ for all sufficiently large $n$;
\item 
for every $\dt >0$, $\dt \log\log n < k_n$ for all sufficiently large $n$. 
\end{enumerate}
Then for every finite abelian group $G$, we have 
$$
\lim_{n \to \infty} \bE(\#\Sur(\cok(A_n), G)) = 1.
$$
\end{cor}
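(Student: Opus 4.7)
The plan is to deduce this corollary directly from Theorem \ref{thm: moment theorem 1} by checking, for each fixed finite abelian group $G$, that the three hypotheses of that theorem are implied by the (group-independent) hypotheses of the corollary. No new estimates are needed; the entire content is the reduction from an ``all primes'' statement to the statement for the finitely many primes dividing $|G|$.

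Fix a finite abelian group $G$ and let $p_1, \ldots, p_r$ be its distinct prime divisors (with $r = 0$ if $G$ is trivial, in which case the claim is vacuous since $\#\Sur(\cok(A_n), G) = 1$). By hypothesis (1) of the corollary, for each index $i$ there exists $N_i$ with $p_i \nmid k_n$ for all $n \ge N_i$. Setting $N = \max_{1 \le i \le r} N_i$, every prime divisor of $|G|$ is coprime to $k_n$ for $n \ge N$, so $\gcd(|G|, k_n) = 1$ for all sufficiently large $n$. This is hypothesis (1) of Theorem \ref{thm: moment theorem 1}. Hypothesis (2) of that theorem is identical to hypothesis (2) of the corollary, and hypothesis (3) of that theorem (relevant only when $|G|$ is even) is an immediate consequence of hypothesis (3) of the corollary, which is assumed unconditionally here.

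With the three hypotheses verified, I would invoke Theorem \ref{thm: moment theorem 1} to conclude $\lim_{n \to \infty} \bE(\#\Sur(\cok(A_n), G)) = 1$. Since $G$ was arbitrary, the corollary follows. There is no real obstacle: the only conceptual point is that the finiteness of the set of primes dividing $|G|$ lets one absorb infinitely many ``for each prime $p$'' hypotheses into a single ``for all sufficiently large $n$'' statement, by taking the maximum of finitely many threshold indices $N_i$.
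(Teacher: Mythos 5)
Your proposal is correct and takes exactly the route the paper intends: the paper simply states that this corollary is a special case of Theorem \ref{thm: moment theorem 1} without further elaboration, and your argument supplies the (easy) verification — chiefly that finiteness of the prime divisors of $|G|$ upgrades hypothesis (1) of the corollary to the single condition $\gcd(|G|, k_n) = 1$ for all large $n$. Nothing is missing.
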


\begin{cor}
Suppose that the following hold: 
\begin{enumerate}
\item 
for every prime $p$, $p \nmid k_n$ for all sufficiently large $n$;
\item 
for every $\ep>0$, $k_n < n^{\ep}$ for all sufficiently large $n$;
\item 
for every $\dt >0$, $\dt \log\log n < k_n$ for all sufficiently large $n$. 
\end{enumerate}
Let $S$ be a finite set of primes and for each $p \in S$, let $G_p$ be a finite abelian $p$-group. Then
$$
\lim_{n \to \infty} \bP\left(\bigoplus_{p\in S}\cok(A_n)_p \cong \bigoplus_{p\in S} G_p\right) = \prod_{p\in S} \frac{1}{|\Aut(G_p)|}\prod_{j=1}^{\infty}(1-p^{-j})=  \prod_{p \in S} \nu_{\CL, p}(G_p).
$$
\end{cor}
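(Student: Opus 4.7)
The statement is essentially a repackaging of Theorem \ref{thm: cokernel distribution theorem 1}, so my plan is to deduce the corollary from that theorem by taking $G := \bigoplus_{p \in S} G_p$ and $\cP := S$, and verifying that the hypotheses transfer.

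First I would observe that with $G := \bigoplus_{p \in S} G_p$, the $p$-Sylow subgroup of $G$ is precisely $G_p$ for each $p \in S$ (and is trivial for any other prime), so every prime dividing $|G|$ lies in $S$. Hence $\cP = S$ is a finite set of primes containing all prime divisors of $|G|$, as required by Theorem \ref{thm: cokernel distribution theorem 1}. Next I would check the three hypotheses: condition (1) of the corollary (for every prime $p$, $p \nmid k_n$ for all sufficiently large $n$) clearly implies condition (1) of Theorem \ref{thm: cokernel distribution theorem 1} (restricted to $p \in \cP$); conditions (2) are identical; and condition (3) of the corollary (which is unconditional on $2$) is at least as strong as condition (3) of Theorem \ref{thm: cokernel distribution theorem 1} (which is only imposed when $2 \in \cP$).

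With the hypotheses in hand, I would apply Theorem \ref{thm: cokernel distribution theorem 1} to obtain
$$
\lim_{n \to \infty} \bP\left( \bigoplus_{p \in S} \cok(A_n)_p \cong G \right)
= \prod_{p \in S} \nu_{\CL, p}(G_p).
$$
The event $\bigoplus_{p \in S} \cok(A_n)_p \cong G$ on the left is exactly the event $\bigoplus_{p \in S} \cok(A_n)_p \cong \bigoplus_{p \in S} G_p$ in the corollary (since the isomorphism type of a finite abelian group is determined by its Sylow decomposition, and on both sides the $p$-components for $p \in S$ match). The product on the right equals $\prod_{p \in S} \frac{1}{|\Aut(G_p)|} \prod_{j=1}^{\infty}(1-p^{-j})$ by the definition of $\nu_{\CL,p}$, completing the identification.

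There is essentially no obstacle here: the whole content is packaged in Theorem \ref{thm: cokernel distribution theorem 1}, whose proof (via the moment computation of Theorem \ref{thm: moment theorem 1} and Wood's moment-determines-distribution result \cite[Theorem 3.1]{Woo19}) is what does the real work. The only small subtlety worth spelling out in the write-up is the matching of the event on the two sides, i.e., that specifying the full Sylow-at-$\cP$ part of $\cok(A_n)$ as an abstract abelian group is the same as specifying each $\cok(A_n)_p$ for $p \in \cP$.
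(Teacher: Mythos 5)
Your proposal is correct and matches the paper's approach: the paper explicitly states this corollary is a special case of Theorem \ref{thm: cokernel distribution theorem 1}, and your instantiation $G = \bigoplus_{p \in S} G_p$, $\cP = S$ with the hypothesis-checking is exactly the intended deduction.
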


\begin{rem}
Does the conclusion of Theorem \ref{thm: moment theorem 1} still hold with assumption (3) replaced by a weaker condition $\lim_{n \to \infty} k_n = \infty$? In Proposition \ref{prop: partial answer}, we prove that \eqref{eq: moment theorem} holds in the special case $G = \Z/2\Z$ when we only assume  $\lim_{n \to \infty} k_n = \infty$ instead of assumption (3). 
We will also show below why at least the condition that $\lim_{n \to \infty} k_n = \infty$ is necessary. See Proposition \ref{prop: kn has to go to infinity} for this. 
\end{rem}

\begin{prop}
\label{prop: partial answer}    
Suppose that the following hold:  
\begin{enumerate}
\item 
$2 \nmid k_n$ for all sufficiently large $n$;
\item 
$k_n = O(n^{\frac{1}{30}-\delta})$ for some $\delta>0$;
\item 
$\lim_{n \to \infty} k_n = \infty$.
\end{enumerate}
Then
\begin{equation*}
\lim_{n \to \infty} \bE(\#\Sur(\cok(A_n), \Z/2\Z)) = 1.     
\end{equation*}
\end{prop}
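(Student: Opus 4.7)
The plan is to follow the same decomposition as in Theorem \ref{thm: moment theorem 1}. By Proposition \ref{prop: moments sum splits} (applicable because $\gcd(2, k_n) = 1$ for large $n$ and $k_n = O(n^{1/30 - \delta})$),
$$
\bE(\#\Sur(\cok(A_n), \Z/2\Z)) = \sum_{H \in \text{Sub}(\Z/2\Z)} \sum_{\unl{n} \in B(n, H)} E(\unl{n}) + o(1).
$$
The contribution from $H = \Z/2\Z$ tends to $1$ by Proposition \ref{prop: main moment}, and the contribution from $B_2(n, \{0\})$ tends to $0$ by Proposition \ref{prop: moment for B2} (whose hypothesis $k_n = n^{o(1)}$ is implied by (2)). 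What remains is to show $\sum_{\unl{n} \in B_1(n, \{0\})} E(\unl{n}) \to 0$ using only $k_n \to \infty$.

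For $G = \Z/2\Z$ and $H = \{0\}$ the only $g \in G \setminus H$ with $2g \in H$ is $g = 1$, and $(1 + H) \cap G_+(\unl{n}) \neq \emptyset$ is equivalent to $n_1 \ge 1$; thus $B_1(n, \{0\}) = B(n, \{0\}) = \{(n - n_1, n_1) : 1 \le n_1 \le r_n\}$. The bound in Proposition \ref{prop: moment for B1} is too lossy here --- it would require $k_n \gg \log \log n$ --- so a sharper analysis tailored to $\Z/2\Z$ is needed. Since $k_n$ is odd, $-a = a$ in $\Z/2\Z$, and a direct parity count gives the closed forms
$$
\mu_{\unl{n}}(0) = \frac{1 + \eta^{k_n - 1}}{2}, \qquad \mu_{\unl{n}}(1) = \frac{1 - \eta^{k_n - 1}}{2}, \qquad \eta := 1 - \frac{2 n_1}{n},
$$
and the analogous formulas $n(k_n - 2)_a / n^{k_n - 2} = (1 \pm \eta^{k_n - 2})/2$. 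Hypothesis (2) together with $n_1 \le r_n = O(k_n^6 \log n)$ forces $\lambda := 2(k_n - 1) n_1 / n = O(k_n^7 \log n / n) \to 0$ uniformly over $\unl{n} \in B_1(n, \{0\})$.

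In this regime I would establish two bounds using only elementary manipulations with the explicit formulas above and the $2 \times 2$ matrix $M_{\unl{n}}$:
$$
\frac{\det M_{\unl{n}}}{n(k_n - 1)_0 \cdot n(k_n - 1)_1} \le C \, k_n, \qquad n D_{\KL}(\nu_{\unl{n}} || \mu_{\unl{n}}) \ge c \, n_1 k_n,
$$
for absolute constants $C, c > 0$ once $k_n$ is sufficiently large. The first uses $1 - \eta^{2(k_n - 1)} \ge 1 - \eta^{k_n - 1} \ge c(k_n - 1) n_1 / n$ (valid because $\lambda \to 0$) to cancel the $(k_n - 1)^2$ coefficients appearing on the diagonal of $M_{\unl{n}}$. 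The second exploits the lower bound $\mu_{\unl{n}}(1) \ge c(k_n - 1) n_1 / n$: Taylor-expanding $\log(\nu_{\unl{n}}(0) / \mu_{\unl{n}}(0))$ yields $n_0 \log(\nu_0/\mu_0) \gtrsim n_1 k_n$, which dominates the negative contribution $n_1 \log(\nu_1/\mu_1) \ge -n_1 \log(k_n - 1)$.

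Combined with $\alpha(\unl{n}) \le 1$, formula \eqref{eq: E formula 2} gives $E(\unl{n}) \le C' \exp(-c n_1 k_n)$, and summing a geometric series in $n_1$ yields
$$
\sum_{\unl{n} \in B_1(n, \{0\})} E(\unl{n}) \le C' \sum_{n_1 = 1}^{\infty} e^{-c n_1 k_n} = O(e^{-c k_n}) \to 0
$$
by hypothesis (3). The main obstacle is the simultaneous sharpening of both factors: the generic Hadamard bound on $\det M_{\unl{n}}$ and the generic Kullback--Leibler bound from Lemma \ref{lem: DKL bound} each individually lose a factor of $k_n$ that is lethal in the exponent, and only by using the closed forms for $\mu_{\unl{n}}$ on $\Z/2\Z$ (made available by the oddness of $k_n$) can one save the missing factor of $k_n$ on each side.
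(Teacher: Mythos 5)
Your proposal follows essentially the same route as the paper's proof. Both reduce via Propositions~\ref{prop: moments sum splits} and~\ref{prop: main moment} to bounding $\sum_{\unl{n}\in B(n,\{0\})}E(\unl{n})$, observe that for $G=\Z/2\Z$ the ball is parametrized by a single coordinate $n_1\in\{1,\dots,r_n\}$, exploit the closed-form identities
\[
n(k_n-1)_0=\tfrac{1}{2}\bigl(n^{k_n-1}+(n-2n_1)^{k_n-1}\bigr),\qquad
n(k_n-1)_1=\tfrac{1}{2}\bigl(n^{k_n-1}-(n-2n_1)^{k_n-1}\bigr),
\]
and produce a bound on $E(\unl{n})$ that decays geometrically in $n_1$ with rate growing like $k_n$, after which the geometric series tends to zero precisely because $k_n\to\infty$. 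Your bookkeeping via $nD_{\KL}\ge c\,n_1k_n$ in formula~\eqref{eq: E formula 2} is arithmetically equivalent to the paper's direct manipulation of~\eqref{eq: E formula 1}: the factor $(\mu(1)/\nu(1))^{n_1}\approx(k_n-1)^{n_1}$ and the factor $\mu(0)^{n_0}\approx e^{-(k_n-1)n_1}$ that your $D_{\KL}$ estimate trades against each other are exactly the quantities $(\ell(k_n-1))^\ell$ and $(1-\ell(k_n-1)/2n)^{n-\ell}$ that the paper bounds in Lemma~\ref{lem: lemma for partial answer}.

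Two small comments. First, your proposed sharpening $\det M_{\unl{n}}/(n(k_n-1)_0\,n(k_n-1)_1)\le Ck_n$ is unnecessary work: the Hadamard bound in~\eqref{eq: 3.6.1} gives $\le k_n^{2}$, which costs only one extra power of $k_n$ and is harmless since the geometric decay rate still tends to infinity with $k_n$; the paper simply uses the Hadamard bound. Second, the appeal to Proposition~\ref{prop: moment for B2} is vacuous here since $B_2(n,\{0\})=\emptyset$ for $G=\Z/2\Z$ (which you implicitly note by writing $B_1=B$). Finally, your sketch stops short of the quantitative work --- the binomial/Taylor-expansion error control is where the paper spends its Lemma~\ref{lem: lemma for partial answer} --- so what you have is a correct outline rather than a finished proof, but the outline does track the paper's argument.
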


\begin{proof}
Let $H = \{0\} \le \Z/2\Z$. By Proposition \ref{prop: moments sum splits} and \ref{prop: main moment}, it is enough to show that
$$
\lim_{n \to \infty} \sum_{\unl{n} \in B(n,H)} E(\unl{n}) = 0.
$$
Let $\unl{n} \in B(n,H)$ with $n_0 = n- \ell$ and $n_1 = \ell$. By definition we have $1 \le \ell \le r_n = (k_n-1)^2C_n|G|\log n$. 
By \eqref{eq: 3.6.1}, 
$$
\det(M) \le k_n^2 n(k_n-1)_0 n(k_n-1)_1. 
$$
So it follows from \eqref{eq: E formula 1} that
$$
E(\unl{n}) \le \frac{n^\ell}{\ell!}\frac{k_n^2 n(k_n-1)_0^{n - \ell}n(k_n-1)_1^{\ell}}{k_nn^{(k_n-1)n}} =: U_n(\ell). 
$$
Note that
\begin{align*}
n(k_n-1)_0 = \sum_{\substack{0\le i \le k_n-1  \\i ~\text{even}}} \binom{k_n-1}{i}(n-\ell)^{k_n-1-i}\ell^{i} &= \frac
{(n-\ell+\ell)^{k_n-1} + (n-\ell-\ell)^{k_n-1}}{2}  \\
 &= \frac{n^{k_n-1}+ (n-2\ell)^{k_n-1}}{2}.    
\end{align*}
Similarly, we have
$$
n(k_n-1)_1 = \frac{n^{k_n-1} - (n-2\ell)^{k_n-1}}{2}. 
$$
By Lemma \ref{lem: lemma for partial answer}, there exists $c>0$ such that for all $n > c$ (assume additionally that $c$ is large enough so that $n>c$ implies that $\frac{1}{3} \le \frac{n-r_n}{2n}$)
$$
U_n(\ell) \le \frac{k_n}{\ell!}\left(1 - \frac{\ell(k_n-1)}{2n} \right)^{n-\ell}(\ell(k_n-1))^{\ell} \le \frac{\ell^\ell}{\ell!}\frac{k_n^{\ell+1}}{e^{\frac{(n-\ell)\ell(k_n-1)}{2n}}} \le \frac{D e^\ell k_n^{\ell+1}}{e^{\frac{\ell(k_n-1)}{3}}}
$$
for a fixed constant $D>0$ (by Stirling's formula). When $k_n \ge 12$, we have $k_n e^{-\frac{k_n-4}{3}} < 1$ so
$$
\sum_{\unl{n} \in B(n,H)} E(\unl{n}) 
\le \sum_{1 \le \ell \le r_n} U_n(\ell) 
\le D\sum_{\ell = 1}^{\infty} k_n\left(k_n e^{-\frac{k_n-4}{3}} \right)^\ell 
= \frac{Dk_n^2 e^{-\frac{k_n-4}{3}}}{1- k_n e^{-\frac{k_n-4}{3}}}.
$$
The right-hand side tends to $0$ as $k_n \to \infty$.
\end{proof}

\begin{lem}
\label{lem: lemma for partial answer}
Suppose that $k_n = O(n^{\frac{1}{7}-\delta})$ for some $\delta>0$. Then there exists $c>0$ such that the following holds for all $n> c$ and $1\le \ell \le r_n$. 
\begin{enumerate}
\item 
    $$
    \frac{n(k_n-1)_0}{n^{k_n-1}} =  \frac{n^{k_n-1}+ (n-2\ell)^{k_n-1}}{2n^{k_n-1}} \le 1 - \frac{\ell(k_n-1)}{2n} 
    $$
\item 
 $$
    \frac{n(k_n-1)_1}{n^{k_n-2}} =   \frac{n^{k_n-1} - (n-2\ell)^{k_n-1}}{2n^{k_n-2}} \le \ell(k_n-1)
    $$   
\end{enumerate}
\end{lem}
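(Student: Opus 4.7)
The plan is to establish both inequalities by elementary algebraic manipulation, with the growth condition on $k_n$ entering only to guarantee that $\ell$ is small relative to $n/k_n$.

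For part (2), I would start from the telescoping identity
$$n^{k_n-1} - (n-2\ell)^{k_n-1} = 2\ell \sum_{i=0}^{k_n-2} n^{k_n-2-i}(n-2\ell)^i.$$
For $n$ large enough, $\ell \le r_n = O(k_n^6 \log n) = o(n)$ forces $0 \le n-2\ell \le n$, so each of the $k_n-1$ summands is bounded by $n^{k_n-2}$. This yields $n^{k_n-1}-(n-2\ell)^{k_n-1} \le 2\ell(k_n-1)n^{k_n-2}$, which is exactly (2) after dividing by $2n^{k_n-2}$.

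For part (1), after dividing by $n^{k_n-1}$ and rearranging, the claim becomes
$$(1-x)^m \le 1 - \tfrac{mx}{2}, \qquad x := \tfrac{2\ell}{n},\; m:= k_n-1.$$
I would prove this by studying $g(x) = (1-x)^m + \tfrac{mx}{2}$. One has $g(0)=1$ and $g'(x) = -m(1-x)^{m-1} + \tfrac{m}{2}$, so $g$ is non-increasing on $[0, x_0]$ with $x_0 = 1 - 2^{-1/(m-1)}$. Hence $g(x) \le g(0) = 1$, i.e., the desired inequality, on $[0,x_0]$. A standard expansion of $2^{-1/(m-1)} = e^{-\ln 2/(m-1)}$ gives $x_0 \ge \tfrac{\ln 2}{2(k_n-2)}$ for $k_n$ sufficiently large, so it suffices to verify $\tfrac{2\ell}{n} \le \tfrac{\ln 2}{2(k_n-2)}$, i.e., $4\ell(k_n-2) \le n \ln 2$. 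Using $\ell \le r_n$ and $r_n = O(k_n^6 \log n)$, this reduces to $k_n^7 \log n = O(n)$, which is guaranteed by the hypothesis $k_n = O(n^{1/7 - \delta})$ once $n$ is large enough.

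The only non-routine step is the inequality $(1-x)^m \le 1 - mx/2$, which reverses the direction of Bernoulli's inequality and therefore requires the explicit restriction $x \le x_0$; the growth assumption on $k_n$ is exactly what is needed to place $x = 2\ell/n$ inside this interval uniformly in $\ell \le r_n$. Everything else is bookkeeping: checking positivity of $n - 2\ell$ and choosing $c$ large enough to absorb the finitely many small values of $n$ and $k_n$.
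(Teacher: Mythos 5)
Your proof is correct, but it takes a genuinely different route from the paper's. The paper proves both parts from a single binomial expansion
$$(n-2\ell)^{k_n-1} = n^{k_n-1} - 2\ell(k_n-1)n^{k_n-2} + \sum_{i=2}^{k_n-1}\binom{k_n-1}{i}(-2\ell)^i n^{k_n-1-i},$$
and then controls the remainder sum by comparing with a geometric series of ratio $2\ell(k_n-1)/n$, which the growth hypothesis makes less than $1$; the tail is thereby absorbed by its leading ($i=2$) term. Your argument for part (2) instead invokes the factorization $a^m - b^m = (a-b)\sum_{i=0}^{m-1} a^{m-1-i}b^i$ with $a=n$, $b=n-2\ell$, which immediately yields the bound once $0 \le n-2\ell$; this is cleaner than the paper's version and needs the growth hypothesis only to the mild extent of guaranteeing $2r_n < n$. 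For part (1) you recast the claim as the one-variable reversed Bernoulli inequality $(1-x)^m \le 1 - \tfrac{mx}{2}$ and prove it by checking that $g(x) = (1-x)^m + \tfrac{mx}{2}$ is non-increasing on $[0, x_0]$ with $x_0 = 1 - 2^{-1/(m-1)}$, then verify via $x_0 \ge \tfrac{\ln 2}{2(k_n-2)}$ that the growth hypothesis places $x = 2\ell/n$ in that interval. This monotonicity argument is conceptually different from the paper's coefficient-by-coefficient estimate and makes transparent exactly where the hypothesis on $k_n$ enters (only into part (1), and only through the requirement $4\ell(k_n-2) \le n\ln 2$); the paper, by contrast, uses the geometric-series bound in both parts. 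The trade-off is that your proof is less uniform in style across the two parts, while the paper reuses one expansion.
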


\begin{proof}
By binomial theorem, we have
$$
(n-2\ell)^{k_n-1} = n^{k_n-1} - 2\ell(k_n-1)n^{k_n-2} + \sum_{i=2}^{k_n-1}\binom{k_n-1}{i}(-2\ell)^{i}n^{k_n-1-i}. 
$$
We see that
$$
\sum_{i=2}^{k_n-1}\binom{k_n-1}{i}(-2\ell)^{i}n^{k_n-1-i}\le \sum_{i=2}^{k_n-1}(k_n-1)^i(2\ell)^{i}n^{k_n-1-i} \le \frac{n^{k_n-3}(k_n-1)^2(2\ell)^2}{1-\frac{(k_n-1)2\ell}{n}} \le \ell(k_n-1)n^{k_n-2}
$$
for all sufficiently large $n$. Note that the assumption $k_n = O(n^{\frac{1}{7}-\delta})$ is used to justify the last two inequalities. Now (1) follows. For (2), note similarly as above that
$$
\left|\sum_{i=3}^{k_n-1}\binom{k_n-1}{i}(-2\ell)^{i}n^{k_n-1-i}\right| \le \sum_{i=3}^{k_n-1}(k_n-1)^i(2\ell)^{i}n^{k_n-1-i} \le \frac{n^{k_n-4}(k_n-1)^3(2\ell)^3}{1-\frac{(k_n-1)2\ell}{n}} \le \binom{k_n-1}{2}(-2\ell)^{2}n^{k_n-3}
$$
for all sufficiently large $n$. Now it is straightforward to see that (2) holds.
\end{proof}

\begin{prop}
\label{prop: kn has to go to infinity}
Suppose that the following hold:  
\begin{enumerate}
\item 
$2 \nmid k_n$ for all sufficiently large $n$;
\item 
$k_n = O(n^{\frac{1}{30}-\delta})$ for some $\delta>0$.
\end{enumerate}
If $\lim_{n \to \infty} k_n \neq \infty$, then there exists $\eta>0$ such that the following holds for infinitely many $n$:
$$
\bE(\#\Sur(\cok(A_n), \Z/2\Z)) > 1+ \eta.
$$    
\end{prop}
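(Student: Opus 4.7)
The plan is to exhibit a specific configuration in $B(n,\{0\})$ whose contribution to the moment stays bounded below by a positive constant on a well-chosen subsequence, forcing $\bE(\#\Sur(\cok(A_n),\Z/2\Z))$ to strictly exceed $1$ infinitely often.

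\textbf{Step 1 (extract a constant subsequence).} Since $(k_n)$ is a sequence of positive integers with $\lim_{n\to\infty} k_n \neq \infty$, we have $\liminf_n k_n < \infty$. Combined with assumption (1), a pigeonhole argument produces an odd integer $k \geq 3$ and an infinite set $\mathcal{N} \subseteq \Z_{>0}$ such that $k_n = k$ for every $n \in \mathcal{N}$. All remaining work is carried out along $\mathcal{N}$.

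\textbf{Step 2 (reduction via the decomposition).} Assumptions (1), (2), together with $k_n \equiv k$ on $\mathcal{N}$, let us apply Proposition \ref{prop: moments sum splits} and Proposition \ref{prop: main moment}. Hence along $\mathcal{N}$,
\[
\bE(\#\Sur(\cok(A_n),\Z/2\Z)) = 1 + \sum_{\unl{n} \in B(n,\{0\})} E(\unl{n}) + o(1).
\]
All summands $E(\unl{n})$ are non-negative, so it suffices to produce a single term bounded below by a positive constant.

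\textbf{Step 3 (analyze $\unl{n}^* = (n-1,1)$).} The tuple with $n_0 = n-1$, $n_1 = 1$ lies in $B(n,\{0\}) \cap D_n'$ for all sufficiently large $n \in \mathcal{N}$, as one checks directly from the definitions. In $\Z/2\Z$ one has
\[
n(k-1)_0 = \frac{n^{k-1} + (n-2)^{k-1}}{2}, \qquad n(k-1)_1 = \frac{n^{k-1} - (n-2)^{k-1}}{2},
\]
and analogous closed forms for $n(k-2)_0, n(k-2)_1$. Plugging these into the definition of $M = M_{\unl{n}^*}$ yields the leading orders $M(0,0) \sim k\, n^{k-1}$, $M(1,1) \sim 2(k-1)\, n^{k-2}$ and $M(0,1) = O(n^{k - 5/2})$, so
\[
\det(M) \sim 2k(k-1)\, n^{2k-3}.
\]
Writing $n(k-1)_0 = n^{k-1} h(n)$ with $h(n) = 1 - (k-1)/n + O(n^{-2})$ gives $n(k-1)_0^{\,n-2} \sim n^{(k-1)(n-2)} e^{-(k-1)}$. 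Since $|S(\unl{n}^*)| = n$ and $n(k-1)_1^{n_1-1} = 1$, the formula from Lemma \ref{lem: prob expression} gives
\[
E(\unl{n}^*) = n \cdot \frac{\det(M) \cdot n(k-1)_0^{\,n-2}}{k\, n^{(k-1)n}},
\]
which, after cancelling powers of $n$, yields
\[
\lim_{\substack{n \to \infty \\ n \in \mathcal{N}}} E(\unl{n}^*) = \frac{2(k-1)}{e^{k-1}} > 0.
\]

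\textbf{Conclusion and main obstacle.} Combining the steps gives $\bE(\#\Sur(\cok(A_n),\Z/2\Z)) \geq 1 + \tfrac{2(k-1)}{e^{k-1}} + o(1)$ along $\mathcal{N}$, so any $\eta < \tfrac{2(k-1)}{e^{k-1}}$ works. The delicate part is the determinant asymptotic in Step 3: one must verify that $M(0,1)^2 = O(n^{2k-5})$ is genuinely of lower order than $M(0,0)M(1,1) = \Theta(n^{2k-3})$, so that the leading constant $2k(k-1)$ in $\det(M)$ survives. Everything else is either a routine cancellation of powers of $n$ or a standard application of $(1 - c/n)^n \to e^{-c}$, and the argument is insensitive to whether $k = 3$ or $k > 3$ since both cases of the definition of $M$ produce the same leading coefficients. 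Finally, it is worth noting that the quantity $\tfrac{2(k-1)}{e^{k-1}}$ exactly matches the excess term observed by Mészáros in the case $k = 3$, namely $4 e^{-2}$.
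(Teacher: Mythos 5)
Your proposal is correct and follows essentially the same route as the paper: both reduce via Propositions \ref{prop: moments sum splits} and \ref{prop: main moment} and then lower-bound the contribution of the single tuple with $n_0=n-1$, $n_1=1$ in $B(n,\{0\})$. The only cosmetic differences are that you first pass to a constant subsequence $k_n \equiv k$ and compute the exact limit $E(\unl{n}^*)\to 2(k-1)/e^{k-1}$, whereas the paper works directly on a subsequence where $k_n$ is merely bounded and derives the cruder (but still uniformly positive) lower bound $E(\unl{n})\ge (k_n-1)^2/(4^{k_n-1}k_n)$.
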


\begin{proof}
Let $G = \Z/2\Z$ and $H= \{0\} \le G$. Let $d>3$ be a constant and suppose that $k_n < d$ for infinitely many $n$. By Proposition \ref{prop: main moment}, we have
$$
\lim_{n \to \infty} \sum_{\unl{n} \in B(n,G)} E(\unl{n}) =1. 
$$
By Proposition \ref{prop: moments sum splits}, it is enough to show that there exists $\eta>0$ such that
$$
\sum_{\unl{n} \in B(n,H)} E(\unl{n}) > \eta
$$
for infinitely many $n$. Let $\unl{n} \in B(n,H)$ be such that $n_0 = n-1$ and $n_1 = 1$. Then by \eqref{eq: E formula 1}, 
$$
E(\unl{n}) = \frac{n}{n^{(k_n-1)n}k_n}\det(M) n(k_n-1)_0^{n-2},
$$
where
$$
M = \begin{pmatrix}
    (k_n-1)(n-1)n(k_n-2)_0+n(k_n-1)_0 & (k_n-1)\sqrt{n-1}n(k_n-2)_1\\
    (k_n-1)\sqrt{n-1}n(k_n-2)_1 & (k_n-1)n(k_n-2)_0 + n(k_n-1)_1
  \end{pmatrix}.
$$
As in the proof of Proposition \ref{prop: partial answer}, for $a \in \{1,2\}$ we have
$$
n(k_n-a)_0 = \frac{n^{k_n-a}+ (n-2)^{k_n-a}}{2}
$$
and
$$
n(k_n-a)_1 = \frac{n^{k_n-a} - (n-2)^{k_n-a}}{2}.
$$
Then it is straightforward to see that the following hold: 
\begin{enumerate}
\item
$n(k_n-1)_0 \ge (n-1)n(k_n-2)_1$ for all $n\ge 1$;
\item 
$n(k_n-2)_0 \ge (k_n-1)n(k_n-2)_1$ for all $n$ that are sufficiently large and satisfy $k_n<d$. 
\end{enumerate}
This yields that for $n$ as in (2), 
\begin{align*}
E(\unl{n}) & \ge \frac{n(n-1)(k_n-1)^2n(k_n-2)_0^2 n(k_n-1)_0^{n-2}}{n^{(k_n-1)n}k_n}  \\ 
& \ge \frac{n(n-1)(k_n-1)^2 (n-1)^{2(k_n-2)} (n-1)^{(n-2)(k_n-1)}}{n^{(k_n-1)n}k_n} \\
& \ge \frac{(k_n-1)^2 }{k_n} \left(\frac{n-1}{n}\right)^{n(k_n-1)} \\
& \ge \frac{(k_n-1)^2 }{4^{k_n-1}k_n}.
\end{align*}
It is clear that the right-hand side is bounded below by some constant $\eta >0$ when $3\le k_n < d$. This completes the proof. 
\end{proof}

\section{The limiting distribution of \texorpdfstring{$\cok(A_n)_2$}{cok(An)} is not Cohen--Lenstra when \texorpdfstring{$k_n$}{kn} is a constant} \label{Sec_not CL}

Let $\ol{A_n} \in \M_n(\bF_2)$ be the reduction of $A_n$ modulo $2$. If the $2$-Sylow subgroup of $\cok(A_n)$, denoted by $\cok(A_n)_2$, converges to the Cohen--Lenstra distribution, then \cite[Theorem 6.3]{CL84} implies that 
\begin{equation} \label{eq7a}
\underset{n \to \infty}{\lim} \bP(\dim_{\bF_2} \ker \ol{A_n} = r) 
=  2^{-r^2} \prod_{k=1}^{r} (1-2^{-i})^{-2} \prod_{i=1}^{\infty} (1-2^{-i}) = O(2^{-r^2})
\end{equation}
for every $r \ge 0$. Comparing \eqref{eq7a} with the following theorem, we deduce that $\cok(A_n)_2$ does not converge to the Cohen--Lenstra distribution as $n \to \infty$ when $k_n = k$ for a fixed constant $k \ge 3$. (If $k_n=k$ for an even integer $k \ge 4$, then $\bP(\cok(\ol{A_n})=0)=0$ as the row space $\mathrm{row}(\ol{A_n})$ is contained in a proper subspace $\{(v_1, \ldots, v_n) \in \bF_2^n: \sum_{i=1}^n  v_i = 0\}$ of $\bF_2^n$. As a result, $\cok(A_n)_2$ does not converge to the Cohen--Lenstra distribution.
Thus, it suffices to consider the case where $k$ is odd as in the following theorem.) 

\begin{thm} \label{thm7a}
Let $k \ge 3$ be an odd integer, $k_n=k$ for all $n$ and $r$ be a positive integer. Then for all sufficiently large $n$, 
\begin{equation} \label{eq7b}
\bP(\dim_{\bF_2} \ker \ol{A_n} \ge r) 
\ge \frac{1}{4r!} \left ( \frac{2(k-1)}{e^{k-1}} \right )^r.
\end{equation}
\end{thm}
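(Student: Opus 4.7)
\emph{Proof plan.} The key reduction is that if $e_i \in \bF_2^n$ (the $i$-th standard basis vector) lies in $\ker \ol{A_n}$, i.e.\ the $i$-th column of $\ol{A_n}$ vanishes, then such vectors for distinct $i$ are automatically linearly independent. Hence, setting
\[
Z_n := \#\{i \in [n] : e_i \in \ker \ol{A_n}\},
\]
we have $\dim_{\bF_2} \ker \ol{A_n} \geq Z_n$, and it suffices to show $\bP(Z_n \geq r) \geq \lambda^r/(4\,r!)$ for all sufficiently large $n$, where $\lambda := 2(k-1)/e^{k-1}$. The plan is to establish $Z_n \Rightarrow \mathrm{Poisson}(\lambda)$ by the method of factorial moments, and to combine this with the elementary inequality $\lambda < \log 4$.

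The central computation is to show that for any fixed $s \geq 1$ and distinct $i_1, \ldots, i_s \in [n]$,
\[
p_s(n) := \bP\bigl(e_{i_1}, \ldots, e_{i_s} \in \ker \ol{A_n}\bigr) = (\lambda/n)^s\,(1 + o(1)) \quad \text{as } n \to \infty.
\]
To prove it, apply Lemma \ref{lem: prob expression} with $G = \bF_2^s$ and $q \in G^n$ defined by $q_{i_\ell} = e_\ell$ for $1 \leq \ell \leq s$ and $q_j = 0$ otherwise; then $A_n q = 0$ in $G$ is precisely the event that each of $i_1, \ldots, i_s$ appears with even multiplicity in every row of $X_n$, which is exactly the event defining $p_s(n)$. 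The type of $q$ satisfies $n_0 = n - s$ and $n_{e_\ell} = 1$, so $G_+(\unl{n}) = \{0, e_1, \ldots, e_s\}$ and $M = M_{\unl{n}}$ is $(s+1) \times (s+1)$. Using the generating-function identity $\cosh(x)^s = 2^{-s}\sum_{j=0}^s \binom{s}{j} e^{(s-2j)x}$ one computes
\[
n(k-1)_0 \;=\; 2^{-s}\sum_{j=0}^s \binom{s}{j}(n-2j)^{k-1} \;=\; n^{k-1}\Bigl(1 - \tfrac{s(k-1)}{n} + O(n^{-2})\Bigr),
\]
along with analogous expansions for $n(k-1)_{e_\ell}$, $n(k-2)_0$, $n(k-2)_{e_\ell}$, and $n(k-2)_{e_\ell+e_j}$. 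These yield diagonal entries $M(0,0) \sim k\,n^{k-1}$ and $M(e_\ell,e_\ell) \sim 2(k-1)n^{k-2}$, while the off-diagonal entries satisfy $M(0,e_\ell) = O(n^{k-5/2})$ and $M(e_\ell,e_j) = O(n^{k-4})$, which are negligible relative to the geometric means of the corresponding diagonals. Consequently
\[
\det(M) = k\,(2(k-1))^s\,n^{k-1+s(k-2)}\,(1+o(1)),
\]
and substituting this together with $n(k-1)_0^{n-s-1} = n^{(k-1)(n-s-1)}\,e^{-s(k-1)}(1+o(1))$ into \eqref{eq: E formula 1} makes the exponent of $n$ collapse to $-s$, giving the claimed asymptotic for $p_s(n)$.

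By symmetry, $p_s(n)$ does not depend on the choice of the $s$ indices, hence
\[
\bE\!\left[\binom{Z_n}{s}\right] = \binom{n}{s}\,p_s(n) \;\longrightarrow\; \frac{\lambda^s}{s!} \quad \text{for every } s \geq 0.
\]
These are precisely the factorial moments of $\mathrm{Poisson}(\lambda)$, a distribution determined by its moments, so the classical method of moments yields $Z_n \Rightarrow \mathrm{Poisson}(\lambda)$. Therefore
\[
\liminf_{n \to \infty} \bP(Z_n \geq r) \;\geq\; \bP(\mathrm{Poisson}(\lambda) = r) \;=\; \frac{\lambda^r e^{-\lambda}}{r!}.
\]
A brief calculus check shows that $\lambda = 2(k-1)/e^{k-1}$ is decreasing in $k$ for $k \geq 3$ (its derivative with respect to $x = k-1$ is $2(1-x)e^{-x} < 0$ for $x \ge 2$), so $\lambda \leq 4/e^2 < \log 4$, giving $e^{-\lambda} > 1/4$ and thus $\lambda^r e^{-\lambda}/r! > \lambda^r/(4\,r!)$ strictly. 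For all sufficiently large $n$ this implies $\bP(Z_n \geq r) \geq \lambda^r/(4\,r!)$, which proves \eqref{eq7b} in view of $\dim_{\bF_2}\ker \ol{A_n} \geq Z_n$. The main technical hurdle is the determinant estimate for $\det(M)$: the off-diagonal entries are only a power of $n^{-1/2}$ or $n^{-1}$ smaller than the diagonals, so one must carefully verify that their cumulative contributions do not perturb the leading-order product of the diagonal entries.
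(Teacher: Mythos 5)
Your proposal is correct, and it takes a genuinely different route from the paper's.

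The paper works directly with the distinguished events $T_{n,i}$ (the $i$-th column of $B_n[K]$ equals $2e_j^T$ for some $j$, i.e., \emph{exactly} one row contains $i$ exactly twice and no other row contains $i$). Because $\det(B_n[K])=0$ unless $K$ has this precise form on each of $i_1,\dots,i_r$, the matrix $B_n[K]$ becomes block upper triangular (after a row ordering), and this yields the \emph{exact} identity
$$p(T_{n,i_1}\cap\cdots\cap T_{n,i_r}) = \bigl(2k(k-1)(n-r)^{k-2}\bigr)^r\,\frac{C_{n-r,k}}{C_{n,k}},$$
which is then combined with a truncated Bonferroni inequality to get the bound $\ge \frac13\binom{n}{r}(\cdots)$ for large $n$. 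Your approach instead starts from the weaker event $\{e_i\in\ker\ol{A_n}\}$ (all rows contain $i$ an even number of times), recognises that the joint probability for $s$ indices is exactly $\bP(A_nq=0)$ for a vector $q$ of type $(n-s,1,\dots,1)$ over $G=\bF_2^s$, and feeds this into Lemma~\ref{lem: prob expression} (the paper's moment machinery); the Fourier expansion $\hat{n}(\chi_S)=n-2|S|$ gives the asymptotics of the $n(\ell)_a$'s you need, and the diagonal scaling $M'=D^{-1/2}MD^{-1/2}\to\mathrm{diag}(k,2(k-1),\dots,2(k-1))$ indeed controls the off-diagonal contributions. Both routes converge to the same leading asymptotic $(\lambda/n)^s$ with $\lambda=2(k-1)/e^{k-1}$, from which factorial moments give $Z_n\Rightarrow\mathrm{Poisson}(\lambda)$, and the elementary bound $\lambda\le 4/e^2<\log 4$ gives $e^{-\lambda}>1/4$. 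The paper's argument buys an exact, self-contained computation with only a two-term Bonferroni inequality (no convergence theorems invoked), while your argument buys a cleaner conceptual picture (Poisson convergence of the number of zero columns) and reuses the existing moment lemma rather than a fresh block-triangular computation. One small citation slip: the substitution step should invoke Lemma~\ref{lem: prob expression} directly rather than \eqref{eq: E formula 1}, since the latter carries the extra multinomial factor $n!/\prod_a n_a!$ appropriate for $E(\unl{n})$ but not for the single-$q$ probability $p_s(n)$; the substance of your computation is unaffected.
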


Let $T_n := \{ K \subset [n]^k : |K|=n \}$ and
$$
p(S) := \sum_{K \in S} \bP(X_n = K) = \sum_{K \in S} \frac{\det(B_n[K])^2}{\det(B_n^T B_n)}
$$
for each $S \subset T_n$. For each $i \in [n]$, let $T_{n,i}$ be the set of $K \in T_n$ such that the $i$-th column of $B_n[K]$ is $2e_j^T$ for some $j \in [n]$. Equivalently, 
$$
T_{n,i} = \left \{ K \in T_n : \; \begin{matrix}
\text{there exists } x=(x_1, \ldots, x_k) \in K \text{ such that } |\{ t : x_t=i \}|=2  \\
\text{and } y_1, \ldots, y_k \neq i \text{ for every } y=(y_1, \ldots, y_k) \in K \backslash \{ x \}
\end{matrix}\right \}.
$$
Before proving Theorem \ref{thm7a}, we provide several lemmas. 

\begin{lem} \label{lem7c}
$$
\bP \left ( \dim_{\bF_2} \ker \ol{A_n} \ge r \right ) \ge p \left ( \bigcup_{i_1 < \cdots < i_r} (T_{n, i_1} \cap \cdots \cap T_{n, i_r}) \right ).
$$
\end{lem}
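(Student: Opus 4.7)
The plan is to show that the event appearing in the argument of $p$ on the right-hand side is contained in the event $\{\dim_{\bF_2} \ker \ol{A_n} \ge r\}$; the inequality will then follow immediately from monotonicity of the probability measure together with $\bP(X_n \in S) = p(S)$.

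First I would unpack the definition of $T_{n,i}$. If $K \in T_{n,i}$, then by definition there is a unique $x = (x_1, \dots, x_k) \in K$ with $i \in \{x_1, \dots, x_k\}$, and $i$ occurs exactly twice among the coordinates of $x$. Consequently, the row of $B_n[K]$ corresponding to $x$, namely $e_{x_1} + \cdots + e_{x_k}$, contributes a $2$ in column $i$, while all other rows of $B_n[K]$ contribute $0$ in that column. Thus the $i$-th column of $B_n[K]$ is $2e_j^T$ where $j$ is the row index of $x$, and reducing modulo $2$ shows that the $i$-th column of $\ol{B_n[K]}$ is the zero vector.

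Next, fix distinct indices $i_1 < \cdots < i_r$ and suppose $K \in T_{n,i_1} \cap \cdots \cap T_{n,i_r}$. Applying the previous step to each $i_s$ simultaneously, every one of the columns $i_1, \dots, i_r$ of $\ol{B_n[K]}$ vanishes. Viewing $\ol{B_n[K]}$ as a linear map $\bF_2^n \to \bF_2^n$, this means $e_{i_1}, \dots, e_{i_r}$ all lie in its kernel, and since they are linearly independent we obtain $\dim_{\bF_2} \ker \ol{B_n[K]} \ge r$. Therefore
$$
\left\{X_n \in \bigcup_{i_1 < \cdots < i_r} (T_{n,i_1} \cap \cdots \cap T_{n,i_r})\right\} \subseteq \left\{\dim_{\bF_2} \ker \ol{A_n} \ge r\right\},
$$
and applying $\bP$ to both sides, together with the identity $\bP(X_n \in S) = p(S)$, yields the desired inequality.

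There is no substantial obstacle in this lemma; it is a direct structural unpacking of the definition of $T_{n,i}$ combined with the fact that an all-zero column of a matrix produces a standard basis vector in its kernel. The actual work in proving Theorem \ref{thm7a} will instead lie in obtaining a usable lower bound for $p\bigl(\bigcup_{i_1 < \cdots < i_r} (T_{n,i_1} \cap \cdots \cap T_{n,i_r})\bigr)$, presumably via Bonferroni-type inclusion–exclusion estimates and an asymptotic count of $\det(B_n[K])^2$ for $K$ in the relevant intersections.
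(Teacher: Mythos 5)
Your proof is correct and follows exactly the same approach as the paper's: unpack the definition of $T_{n,i}$ to see that membership forces the $i$-th column of $\ol{B_n[K]}$ to vanish, deduce that membership in the $r$-fold intersection gives $r$ zero columns and hence $\dim_{\bF_2}\ker\ol{B_n[K]}\ge r$, and conclude by monotonicity of the measure $p$. The paper states this in a single sentence; you merely spell out the intermediate steps.
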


\begin{proof}
If $K \in T_{n, i_1} \cap \cdots \cap T_{n, i_r}$, then each of $i_1, \ldots, i_r$-th columns of $\ol{B_n[K]}$ is zero so $\dim_{\bF_2} \ker \ol{B_n[K]} \ge r$. 
\end{proof}

\begin{lem} \label{lem7d}
For every $1 \le r \le n-1$, we have
\begin{equation*}
p \left ( \bigcup_{i_1 < \cdots < i_r} (T_{n, i_1} \cap \cdots \cap T_{n, i_r}) \right ) 
\ge \sum_{i_1 < \cdots < i_r} p(T_{n, i_1} \cap \cdots \cap T_{n, i_r}) - r \sum_{i_1 < \cdots < i_{r+1}} p(T_{n, i_1} \cap \cdots \cap T_{n, i_{r+1}})
\end{equation*}
\end{lem}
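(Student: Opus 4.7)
The plan is to prove this as a second-order Bonferroni-type inequality applied to the events $T_{n,1}, \dots, T_{n,n}$ under the probability measure $p$. For each outcome $K$ in the sample space $T_n$, define the integer-valued random variable
$$
N(K) := |\{i \in [n] : K \in T_{n,i}\}|.
$$
For any $I \subseteq [n]$ of size $r$, set $A_I = \bigcap_{i \in I} T_{n,i}$. Then $K \in A_I$ holds precisely when $I$ is contained in $\{i : K \in T_{n,i}\}$, so the number of $r$-subsets $I$ with $K \in A_I$ equals $\binom{N(K)}{r}$, and likewise the number of $(r+1)$-subsets $J$ with $K \in A_J$ equals $\binom{N(K)}{r+1}$. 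Moreover $K \in \bigcup_{|I|=r} A_I$ exactly when $N(K) \ge r$. Summing over $K$ weighted by $p$, the desired inequality becomes
$$
\mathbb{E}_p[\mathbb{1}(N \ge r)] \; \ge \; \mathbb{E}_p\!\left[\binom{N}{r}\right] - r\, \mathbb{E}_p\!\left[\binom{N}{r+1}\right].
$$

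It therefore suffices to establish the pointwise inequality
$$
\mathbb{1}(N \ge r) \; \ge \; \binom{N}{r} - r \binom{N}{r+1}
$$
for every integer $N \ge 0$. I would verify this by a short case analysis. If $N < r$ both sides vanish. If $N = r$ the right-hand side equals $1 - 0 = 1 = $ LHS, and if $N = r+1$ it equals $(r+1) - r \cdot 1 = 1 = $ LHS. Finally, for $N \ge r+2$, rewrite
$$
\binom{N}{r} - r\binom{N}{r+1} = \binom{N}{r}\left(1 - \frac{r(N-r)}{r+1}\right),
$$
and since $\tfrac{r(N-r)}{r+1} \ge \tfrac{2r}{r+1} \ge 1$ for $r \ge 1$, the bracket is $\le 0$, so the RHS is non-positive while LHS $= 1$.

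Integrating the pointwise bound against $p$ then yields the lemma. There is no real obstacle here beyond carefully booking the combinatorial identities for $\sum_{|I|=r}\mathbb{1}(A_I)$ and $\sum_{|J|=r+1}\mathbb{1}(A_J)$ in terms of $N$, and checking the elementary pointwise inequality; the rest is a direct application of linearity of expectation.
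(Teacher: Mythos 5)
Your proof is correct and is essentially the same argument as the paper's: where the paper partitions $T_n$ into the sets $T_{n,I}$ according to the exact index set $I=\{i: K\in T_{n,i}\}$ and then manipulates the double sum, you package the same bookkeeping into the random variable $N(K)=|\{i: K\in T_{n,i}\}|$ and reduce to the pointwise inequality $\mathbb{1}(N\ge r)\ge\binom{N}{r}-r\binom{N}{r+1}$, which is precisely the coefficient comparison $\binom{m}{r}-r\binom{m}{r+1}\le 1$ appearing in the paper.
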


\begin{proof}
For every subset $I \subset [n]$, let $T_{n, I}$ be the set of the elements $K \in T_n$ such that $K \in T_{n,i}$ for every $i \in I$ and $K \notin T_{n,i}$ for every $i \in [n] \backslash I$. Then 
\begin{align*}
& \sum_{i_1 < \cdots < i_r} p(T_{n, i_1} \cap \cdots \cap T_{n, i_r}) - r \sum_{i_1 < \cdots < i_{r+1}} p(T_{n, i_1} \cap \cdots \cap T_{n, i_{r+1}}) \\
= & \, \sum_{m=r}^{n} \sum_{\substack{I \subset [n] \\ |I|=m}} \binom{m}{r} p(T_{n, I}) - r \sum_{m=r+1}^{n} \sum_{\substack{I \subset [n] \\ |I|=m}} \binom{m}{r+1} p(T_{n, I}) \\
= & \, \sum_{\substack{I \subset [n] \\ |I|=r}} p(T_{n, I}) + \sum_{m=r+1}^{n} \sum_{\substack{I \subset [n] \\ |I|=m}} \left ( \binom{m}{r}-r\binom{m}{r+1} \right ) p(T_{n, I}) \\
\le & \, \sum_{m=r}^{n} \sum_{\substack{I \subset [n] \\ |I|=m}} p(T_{n, I}) \\
= & \, p \left ( \bigcup_{i_1 < \cdots < i_r} (T_{n, i_1} \cap \cdots \cap T_{n, i_r}) \right ). \qedhere
\end{align*}
\end{proof}

The next lemma is the key part of the proof of Theorem \ref{thm7a}. Denote (see \eqref{eq: det of BnTBn})
$$
C_{n,k} := \det(B_n^T B_n) = k^{n+1}n^{(k-1)n}.
$$
\begin{lem} \label{lem7e}
For every $1 \le i_1 < \cdots < i_r \le n$, we have
$$
p(T_{n, i_1} \cap \cdots \cap T_{n, i_r}) 
= (2k(k-1)(n-r)^{k-2})^r \frac{C_{n-r, k}}{C_{n,k}}.
$$
\end{lem}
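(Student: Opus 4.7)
The plan is to partition $T_{n,i_1} \cap \cdots \cap T_{n,i_r}$ using the witness elements, show that only the "generic" case contributes, and then exploit a block-triangular structure of $B_n[K]$ to reduce the computation to a smaller Cauchy--Binet sum. For each $K$ in the intersection and each $s \in [r]$, the definition of $T_{n,i_s}$ singles out a unique $x^{(s)} \in K$ such that $|\{t : x^{(s)}_t = i_s\}|=2$, and the value $i_s$ appears nowhere else in $K$. I would split into Case A, where $x^{(1)}, \ldots, x^{(r)}$ are pairwise distinct, and Case B, where $x^{(s)}=x^{(s')}$ for some $s \neq s'$. In Case B, both of the columns of $B_n[K]$ indexed by $i_s$ and $i_{s'}$ equal $2$ times the indicator vector of the single row $x^{(s)}=x^{(s')}$, since $i_s$ and $i_{s'}$ each appear in $K$ only inside that element; two identical columns force $\det(B_n[K])=0$, so Case B contributes nothing.

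For Case A, I would first observe that each $x^{(s)}$ must have its remaining $k-2$ coordinates in $[n] \backslash \{i_1, \ldots, i_r\}$: a coordinate equal to some $i_{s'}$ with $s'\neq s$ would mean $i_{s'}$ appears in $x^{(s)}$, forcing $x^{(s)}=x^{(s')}$ by the definition of $T_{n,i_{s'}}$, contradicting Case A. After reordering rows to place $x^{(1)},\ldots,x^{(r)}$ first and columns to place $i_1,\ldots,i_r$ first, the matrix $B_n[K]$ takes the block upper-triangular form
$$
B_n[K] = \begin{pmatrix} 2 I_r & * \\ 0 & B_{n-r}[K'] \end{pmatrix},
$$
where $K' := K \backslash \{x^{(1)},\ldots,x^{(r)}\}$ is an $(n-r)$-subset of $([n] \backslash \{i_1, \ldots, i_r\})^k \cong [n-r]^k$, and the bottom-right block, under this identification, is exactly $B_{n-r}[K']$. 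Consequently $\det(B_n[K])^2 = 4^r \det(B_{n-r}[K'])^2$.

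Finally, the assignment $K \mapsto (x^{(1)},\ldots,x^{(r)},K')$ is a bijection from Case A onto the set of tuples where each $x^{(s)}$ is any $k$-tuple with exactly two coordinates equal to $i_s$ and the remaining $k-2$ in $[n] \backslash \{i_1, \ldots, i_r\}$ (giving $\binom{k}{2}(n-r)^{k-2}$ choices per $s$), and $K'$ is an arbitrary $(n-r)$-subset of $[n-r]^k$; distinctness of the $x^{(s)}$'s among themselves and from $K'$ is automatic since they alone contain the values $i_s$. Summing $\det(B_n[K])^2$ across this parametrization and applying Cauchy--Binet, $\sum_{K'} \det(B_{n-r}[K'])^2 = \det(B_{n-r}^T B_{n-r}) = C_{n-r,k}$, yields
$$
\sum_{K \in T_{n,i_1} \cap \cdots \cap T_{n,i_r}} \det(B_n[K])^2 = 4^r \left(\binom{k}{2}(n-r)^{k-2}\right)^r C_{n-r,k} = \bigl(2k(k-1)(n-r)^{k-2}\bigr)^r C_{n-r,k},
$$
and dividing by $C_{n,k}=\det(B_n^T B_n)$ gives the stated formula. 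The only delicate point is verifying the block decomposition and the parametrization bijection in Case A; both are routine once the vanishing of Case B is in hand, and the rest is pure arithmetic.
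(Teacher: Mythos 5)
Your proof is correct and follows essentially the same route as the paper: identify that coinciding witness rows force $\det = 0$, establish the block-triangular form for the remaining $K$, parametrize by the $r$ witness rows together with the residual subset $K'$, and finish with Cauchy--Binet on $B_{n-r}$. The paper achieves the block form by fixing an ordering $\prec$ of $[n]^k$ by minimal coordinate and assuming WLOG $(i_1,\ldots,i_r)=(1,\ldots,r)$, whereas you reorder rows and columns after the fact, but this is only a cosmetic difference since $\det^2$ is permutation-invariant.
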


\begin{proof}
Without loss of generality, we may assume that $(i_1, \ldots, i_r) = (1, \ldots, r)$. Let $\prec$ be any ordering on $[n]^k$ such that $(x_1, \ldots, x_k) \prec (y_1, \ldots, y_k)$ if 
$$
\min(x_1, \ldots, x_k) < \min(y_1, \ldots, y_k).
$$
Assume that the rows of $B_n$ are ordered by the ordering $\prec$. If $K \in \bigcap_{i=1}^{r} T_{n,i}$ and $\det(B_n[K]) \neq 0$, then the $i$-th column of $B_n[K]$ is given by $2e_i^T$ for each $i \in [r]$ by the choice of the ordering of the rows of $B_n$. Precisely, we have
$$
B_n[K] = \begin{pmatrix}
2I_r & * \\
O & B_{n-r}[K_2] \\
\end{pmatrix} \in \M_{r+(n-r)}(\Z)
$$
for some $K_2 \in T_{n-r}$ such that $\det(B_{n-r}[K_2]) \neq 0$. This implies that
\begin{align*}
p \left ( \bigcap_{i=1}^{r} T_{n,i} \right ) 
& = \sum_{K \in \bigcap_{i=1}^{r} T_{n,i}} \frac{\det(B_n[K])^2}{\det(B_n^T B_n)} \\
& = \sum_{\substack{K \in \bigcap_{i=1}^{r} T_{n,i} \\ \det(B_n[K]) \neq 0} }\frac{\det(B_n[K])^2}{\det(B_n^T B_n)} \\
& = \left | U_{n,k,r} \right | \sum_{\substack{K_2 \in T_{n-r} \\ \det(B_{n-r}[K_2]) \neq 0}}\frac{(2^r \det(B_{n-r}[K_2]))^2}{\det(B_n^T B_n)} \\
& = \left | U_{n,k,r} \right | \sum_{K_2 \in T_{n-r}}\frac{(2^r \det(B_{n-r}[K_2]))^2}{\det(B_n^T B_n)} \\
& = \left | U_{n,k,r} \right | \frac{4^r C_{n-r, k}}{C_{n,k}},
\end{align*}
where 
$$
U_{n,k,r} := \left \{ K_1 \subset [n]^k : |K_1|=r \text{ and } B_n[K_1] = \begin{pmatrix}
2I_r & * \\
\end{pmatrix} \right \}.
$$
Let $K_1 = \{\mathbf{x}_1, \ldots, \mathbf{x}_r\} \in U_{n,k,r}$ ($\mathbf{x}_1 \prec \mathbf{x}_2 \prec \cdots \prec \mathbf{x}_r$) and $\mathbf{x}_i = (x_{i,1}, \ldots, x_{i,k})$. Then for each $i \in [r]$, exactly two of $x_{i,1}, \ldots, x_{i,k}$ are equal to $i$ (there are $\binom{k}{2}$ choices) and the other $x_{i, j}$'s are larger than $r$ (there are $(n-r)^{k-2}$ choices). Now we have
$$
|U_{n,k,r}| = \left ( \binom{k}{2} (n-r)^{k-2} \right )^r
$$
so
\begin{equation*}
p \left ( \bigcap_{i=1}^{r} T_{n,i} \right ) = \left ( \binom{k}{2} (n-r)^{k-2} \right )^r \frac{4^r C_{n-r, k}}{C_{n,k}} = (2k(k-1)(n-r)^{k-2})^r \frac{C_{n-r, k}}{C_{n,k}}. \qedhere
\end{equation*}
\end{proof}

\bigskip
\begin{proof}[Proof of Theorem \ref{thm7a}]
By Lemma \ref{lem7c}, \ref{lem7d} and \ref{lem7e}, we have
\begin{align*}
& \, \bP \left ( \dim_{\bF_2} \ker \ol{A_n} \ge r \right ) \\
\ge & \, \sum_{i_1 < \cdots < i_r} p(T_{n, i_1} \cap \cdots \cap T_{n, i_r}) - r \sum_{i_1 < \cdots < i_{r+1}} p(T_{n, i_1} \cap \cdots \cap T_{n, i_{r+1}}) \\
= & \,  \binom{n}{r}(2k(k-1)(n-r)^{k-2})^r \frac{C_{n-r, k}}{C_{n,k}} - r\binom{n}{r+1}(2k(k-1)(n-r-1)^{k-2})^{r+1} \frac{C_{n-r-1, k}}{C_{n,k}} \\
= & \,  \binom{n}{r}(2k(k-1)(n-r)^{k-2})^r \frac{C_{n-r, k}}{C_{n,k}} \left ( 1 - \frac{2k(k-1)r(n-r)}{r+1} \frac{(n-r-1)^{(k-2)(r+1)}}{(n-r)^{(k-2)r}} \frac{C_{n-r-1,k}}{C_{n-r,k}} \right ).
\end{align*}
By the formula $C_{n,k}=k^{n+1}n^{(k-1)n}$, we have
$$
\frac{C_{n-r-1,k}}{C_{n-r,k}} 
= \left ( 1 - \frac{1}{n-r} \right )^{(n-r)(k-1)} \frac{1}{k(n-r-1)^{k-1}} 
\sim \frac{1}{ke^{k-1}(n-r-1)^{k-1}}
$$
and
\begin{align*}
\frac{2k(k-1)r(n-r)}{r+1} \frac{(n-r-1)^{(k-2)(r+1)}}{(n-r)^{(k-2)r}} \frac{C_{n-r-1,k}}{C_{n-r,k}} 
\sim \frac{2(k-1)}{e^{k-1}} \frac{r}{r+1} < \frac{2}{3}.
\end{align*}
This implies that if $n$ is sufficiently large (in terms of $r$ and $k$), we have
$$
\bP \left ( \dim_{\bF_2} \ker \ol{A_n} \ge r \right ) > \frac{1}{3} \binom{n}{r}(2k(k-1)(n-r)^{k-2})^r \frac{C_{n-r, k}}{C_{n,k}}. 
$$
We also have
$$
\frac{C_{n-r, k}}{C_{n,k}}
= \left ( 1 - \frac{r}{n} \right )^{n(k-1)} \frac{1}{k^r (n-r)^{(k-1)r}}
\sim \frac{1}{k^r e^{(k-1)r}(n-r)^{(k-1)r}}
$$
so
$$
\binom{n}{r}(2k(k-1)(n-r)^{k-2})^r \frac{C_{n-r, k}}{C_{n,k}}
\sim \frac{1}{r!}\left ( \frac{2(k-1)}{e^{k-1}} \right )^r.
$$
We conclude that for all sufficiently large $n$, 
\begin{equation*}
\bP \left ( \dim_{\bF_2} \ker \ol{A_n} \ge r \right ) \ge \frac{1}{4r!} \left ( \frac{2(k-1)}{e^{k-1}} \right )^r. \qedhere 
\end{equation*}
\end{proof}

\section*{Acknowledgments}
\hspace{3mm} 
We are grateful to András Mészáros for helpful comments. 
Jungin Lee was supported by the National Research Foundation of Korea (NRF) grant funded by the Korea government (MSIT) (No. RS-2024-00334558 and No. RS-2025-02262988) and by the Ajou University Research Fund (S-2025-G0001-00398).
Myungjun Yu was supported by the National Research Foundation of Korea (NRF)
grant funded by the Korea government (MSIT) (RS-2025-23525445), by Yonsei University Research Fund (2024-22-0146), and by Korea Institute for Advanced Study (KIAS) grant funded by the Korea government.
\

\end{document}